\newcommand{\CP}{\mathbb{CP}}
\newcommand{\pp}{\mathbb{CP}}
\newcommand{\C}{\mathbb{C}}
\newcommand{\Z}{\mathbb{Z}}
\newcommand{\G}{\Gamma}
\newcommand{\g}{\gamma}
\newcommand{\T}{\cdot}
\newcommand{\p}{\pi_1}
\newcommand{\vp}{\varphi}
\newcommand{\ri}{\rightarrow}
\newcommand{\Sing}{ \mathop{\rm Sing} }
\newcommand{\Int}{ \mathop{\rm Int} }
\newcommand{\Graph}{ \mathop{\rm Graph} }
\newcommand{\tB}{\widetilde{B}}
\newcommand{\un}{\underline}
\def\lm{\lambda}
\def\Dl{\Delta}
\def\dl{\delta}
\newcommand{\usr}{\underset\sim\rightarrow}
\newcommand{\uz}{\overline{Z}}
\newcommand{\uzs}{\overline{Z}^2}
\newcommand{\lz}{\underline{Z}}
\begin{document}

\newtheorem{thm}{Theorem} [section]

\newtheorem{corollary}[thm]{Corollary}
\newtheorem{lemma}[thm]{Lemma}
\newtheorem{prs}[thm]{Proposition}
\newtheorem{defi}[thm]{Definition}
\newtheorem{remark}[thm]{Remark}
\newtheorem{conjecture}[thm]{Conjecture}
\newtheorem{example}[thm]{Example}
\newtheorem{notation}[thm]{Notation}
\newtheorem{condition}[thm]{Condition}
\newtheorem{construction}[thm]{Construction}
\newtheorem*{condition*}{Condition}

\title[On fundamental groups related to degeneratable surfaces]{On fundamental groups related to degeneratable surfaces: conjectures and examples}

\author[M. Friedman, M. Teicher]{Michael Friedman and Mina Teicher$^1$}\address{
Michael Friedman, Mina Teicher, Department of Mathematics,
 Bar-Ilan University, 52900 Ramat Gan, Israel}
 \email{fridmam@macs.biu.ac.il, teicher@macs.biu.ac.il}
\stepcounter{footnote} \footnotetext{This work is partially
supported by the Emmy Noether Research Institute for Mathematics
(center of the Minerva Foundation of Germany).}

\maketitle
\begin{abstract}

We argue that for a smooth surface $S$, considered as a ramified cover over $\pp^2$, branched over a
nodal-cuspidal curve $B \subset \CP^2$, one could use
the structure of the fundamental group of the complement of the branch curve
$\pi_1(\pp^2 - B)$ to understand other properties of the surface and its degeneration and vice-versa.
In this paper, we look at embedded-degeneratable surfaces - a class of surfaces admitting a planar
 degeneration with a few combinatorial conditions imposed on its degeneration.  We close a
  conjecture of Teicher on the virtual solvability of  $\pi_1(\pp^2 - B)$ for these surfaces and present
  two new conjectures on the structure of this group, regarding non-embedded-degeneratable surfaces.
  We  prove two theorems supporting our conjectures, and show that for  $\pp^1 \times C_g$,
  where $C_g$ is a curve of genus $g$, $\pi_1(\pp^2 - B)$ is a quotient of an Artin group associated to the degeneration.

\end{abstract}

\tableofcontents

\section[Introduction]{Introduction}

Given a smooth algebraic projective variety $X$, one of the main techniques used to obtain information
on $X$ is to degenerate it to a union of ``simpler" varieties. The ``simplest" degeneration can be thought
as the degeneration of $X$ to a union of $\dim X$--planes, and one would like to use the combinatorial data induced
from this arrangement of planes in order to find (or bound) certain invariants of $X$.

When $\dim X = 1$, one would like to degenerate the curve into a
line arrangement with only nodes as the singularities. This has been thoroughly
investigated. For example, it is known that any smooth
plane curve can be degenerated into a union of lines. However, the
situation for a curve in $\pp^n,\,n > 2$ is completely different
as there are, for example, smooth curves in $\pp^3$ which cannot
be degenerated into a line arrangement with only double points
(see \cite{Ha}).

When $\dim X = 2$, the problem of investigating projective surfaces
in terms of their degeneration to a union of planes has only been
investigated partially (see, for example, Zappa's papers from the
1940's \cite{Z} and \cite{CCFR1} for a survey on this topic; see
also \cite{Ku3} for degeneration of surfaces in $\pp^3$). Also,
one should allow the existence of more complicated singularities in
order to obtain degenerations. But there is another method to
extract information on the surface, which is to consider it as a
branched cover of the projective plane $\pp^2$ with respect to a
generic projection. The motivation for this point of view is
Chisini's conjecture (recently proved by Kulikov
\cite{Ku},\cite{Ku2}): Let $B$ be the branch curve of generic
 projection $\pi: S \to \pp^2$ of degree at
least 5. Then $(S,\pi)$ is uniquely determined by the pair
$(\pp^2,B)$. Moreover, if two surfaces $S_1$ and $S_2$ are
deformation equivalent, then their branch curves $B_1$ and $B_2$
are isotopic. Thus, if the fundamental group $\pi_1(\C^2 - B_1)$
is not isomorphic to $\pi_1(\C^2 - B_2)$ then the surfaces are not
deformation equivalent. This gives another motivation for
considering $S$ in terms of its branch curve.

Therefore, it is reasonable to combine the two methods outlined above, i.e., investigating a projective surface
$S$ and its degeneration $S_0$ by looking at their branch curves $B$ and $B_0$. Explicitly, we want to find the relations between the combinatorics of the planar degeneration and the fundamental group $\pi_1(\C^2 - B)$.

Several works were done in this direction: for different
embeddings of $\pp^1 \times \pp^1$, for the Veronese surface $V_n$
(\cite{MoTe4}, \cite{MoTe5} for  $V_n, n\geq 3$ and \cite{Za} for
$V_2$), for the Hirzebruch surfaces $F_{1,(a,b)}$ and
$F_{2,(2,2)}$ (\cite{Fr2}, \cite{AFT2}), for $K3$ surfaces
(\cite{FrTe}), for a few toric surfaces and for $\pp^1 \times
\mathbb{T}$ (where $ \mathbb{T}$ is a complex torus. see
\cite{AFT}). For each surface in this list one can associate a
graph $T$ to the degenerated surface. In all of the examples
mentioned above the fundamental group $\pi_1(\C^2 - B)$ is either
a quotient of an associated Artin group $A(T)$ (except the
Veronese surface $V_2 \subset \pp^5$) or a quotient of a subgroup
of $\widetilde{A}(T) \times \widetilde{A}(T)$ (where
$\widetilde{A}(T)$ is a quotient of $A(T)$ by a single relation.
For example, when $T$ is a tree with maximum valence $3$, then
$A(T)$ is isomorphic to the braid group $B_n$, where $n = $ degree
of the surface). In particular, once the embedding of the surface
in a projective space is ``ample enough", the structure of
$\pi_1(\C^2 - B)$ is of the mentioned second type. Thus, a natural
question rises: what are the sufficient conditions on the
degeneration such that $\pi_1(\C^2 - B)$ will be isomorphic to a  quotient of a
subgroup of $\widetilde{A}(T) \times \widetilde{A}(T)$? One of the
goals of this paper is to give the conditions under which the
fundamental group has this desired structure. These conditions are in a form of a local--global condition: if
there are enough singular points in the degenerated surface satisfying a certain local condition, then the
fundamental group is isomorphic to the quotient. Under these
conditions, the conjecture posed in \cite{Te1} regarding the
virtual-solvability of the above fundamental group is proven.
%
%Another question that one can ask is about the structure of $\pi_1(\C^2 - B)$ when the above conditions are not
%fulfilled. As was indicated, in this case our conjecture  is that the fundamental group is a quotient of  $A(T)$,
%but one would have to restrict considerably the type of the singular points of the degeneration in order to
%prove this conjecture. However, we do give another set of examples where this conjecture is true -- for the surface
%$\pp^1 \times C_g$, where $C_g$ is a curve of genus $g \geq 1$.

Another main result deals with a new set of examples, not
satisfying these conditions. The surfaces $\pp^1 \times C_g$,
where $C_g$ is a curve of genus $g \geq 1$, are studied, and for
$g \geq 1$ the above fundamental group is computed. These new examples
are essential for the second goal of this paper: to understand
better these groups for non-simply connected surfaces.

The structure of the paper is as follows. Section \ref{sec2} examines the structure of the fundamental group. Subsections \ref{subsec2cond} and \ref{subsecCondDeg} introduce  the main definitions and restrictions on the
degeneration. We then state one of the main theorems in Subsection \ref{subsec2MainThm}: that under a certain condition, there is an epimorphism from $\widetilde{B}^{(2)}_n$ to  $\pi_1(\C^2 - B)$.
We also present two conjectures on the structure of $\pi_1(\C^2 - B)$ when the condition does not hold (see Conjectures \ref{conjStructure} and \ref{conjStructure2}). In Subsection \ref{subsecProof} we prove the main theorem from Subsection \ref{subsec2MainThm}. In Section \ref{sec3} we prove another main theorem, where we compute the  group $G_g = \pi_1(\C^2 - B_g)$, where $B_g$ is the branch curve of $\pp^1 \times C_g$. We show that $G_g$ is (again) a quotient of $A(T)$, and also compute $\pi_1((\pp^1 \times C_g)_{Gal})$ -- the fundamental group of the Galois cover  of $\pp^1 \times C_g$.

 \textbf{Acknowledgements}:
 We thank Alberto Calabri and Ciro Ciliberto for refereing the first author to their paper  \cite{CCFR} and for fruitful discussions  during the ``School (and Workshop) on the Geometry of Special Varieties" which was
 held at 2007 at the IRST,  Fondazione Bruno Kessler in Povo (Trento).
 We also would like to thank Christian Liedtke and Robert Schwartz for stimulating talks and important discussions.

\section[Degenerations and fundamental groups]{Degenerations and fundamental groups} \label{sec2}
In this section we examine the structure of the fundamental group
of the complement of the branch curve, under some assumptions.
Subsection \ref{subsec2cond} introduces the main definitions and
notations. We state the main theorem on the structure of the
fundamental group of the complement of the branch curve in $\C^2$,
under certain conditions, in Subsection \ref{subsec2MainThm} and
also present two conjectures on the structure of the fundamental
group regarding surfaces which do not satisfy the desired
conditions. The virtual--solvability of this fundamental group is
discussed in Subsection \ref{subsecVirt}, together with the class
of surfaces satisfying the desired conditions. In Subsection
\ref{subsecProof} we prove the main theorem.

\subsection[Notations for planar degeneration]{Notations for planar degeneration} \label{subsec2cond}
We begin with a few definitions.

\begin{defi} \label{defDeg}\begin{enumerate} \item [\emph{(i)}] \emph{Degeneration: Let $\Delta$ be the complex unit disc. A degeneration of surfaces, parametrized by $\Delta$ is a proper and flat morphism $\rho : S \rightarrow \Delta $ (where $S$ is a 3--dim variety) such that each fibre $S_t = \rho^{-1}(t)$, $t \neq 0$ (where 0 is the
closed point of $\Delta$), is a smooth, irreducible, projective
surface. The fiber $S_0$ is called the central fiber. A
degeneration $\rho : S \rightarrow \Delta $ is said to be embedded
in $\pp^r$ if there is an inclusion $i:S \hookrightarrow \Delta
\times \pp^r$ and, when denoting the projection $p_1 : \Delta
\times \pp^r \rightarrow \Delta$, then $p_1i = \rho$. } \item
[\emph{(ii)}] \emph{Planar degeneration: When the central fiber
$S_0$ in the above embedded degeneration is a union of planes,
then we call the degeneration a planar degeneration. A survey on degeneratable  surfaces can
be found in \cite{CCFR1}. Examples of
planar degenerations can be found in
\cite{CCFR} (for scrolls), \cite{MoRoTe} (for Hirzebruch surfaces), \cite{MoTe3} (for veronese surfaces), \cite{Mo2} (for $\pp^1 \times \pp^1$).} \item [\emph{(iii)}]
\emph{Regeneration: The regeneration methods are actually,
locally, the reverse process of the degeneration method. In this
article it is used as a generic name for finding a degeneration
$\rho : S \rightarrow \Delta $ when the central fiber $S_0$ is
given. In fact, one can deduce what is the effect of a
regeneration on the corresponding branch curves. The regeneration
rules (see Subsection \ref{subsecRegRule}) explain the effect of
the regeneration on the braid monodromy factorization (see
Subsection \ref{subsecBMF}) of the branch curves of the fibers. }
\item [\emph{(iv)}] \emph{Local fundamental group: Given a planar
degeneration $\rho : S \rightarrow \Delta $, denote by $B_t$ the
branch curve of a generic projection of $S_t$ to $\pp^2$ (such that the center
of projection is the same for every $t$). Given a singular point
$p \in B_0$ choose a small neighborhood $U$ of $p$ such that $U
\cap \Sing(B_0) = \{p\}$. Since $S_0$ is a planar degeneration,
there are lines $\ell_i$ such that  $U\cap B_0 = \cup (U \cap \ell_i) $, such that $\cap
\ell_i = \{p\}$. Assume that for the branch curve $B_t$ of general
fiber $S_t$,$t \neq 0$, we have that $\lim_{t \rightarrow 0}(U
\cap \Sing(B_t)) = \{p\}$. The local fundamental group of $p$ is
defined as $\pi_1(U - B_t)$ and we denote it by $G_p$.}
\end{enumerate}
\end{defi}

Let $S_1 \subset \CP^N$ be a  smooth surface of degree $n$ which admits a planar degeneration
$\rho : S \rightarrow \Delta$, and let $f : \pp^N
\to \pp^2$ be the generic projection w.r.t. $S_t$, for every $t$. We
denote by $R$ the ramification curve of $S_1$ and by $B \subset
\pp^2$ its branch curve with respect to a generic projection $\pi
\doteq  f|_{S_1}: S_1 \ri \CP^2$. Also, let $G \doteq \p(\CP^2 - B)$.

We denote by $S_0$ the planar degeneration of $S_1$ (the central fiber of $\rho$), i.e. $S_0$ is a
union of planes. Let
$$
 S_0 = \bigcup_{i=1}^n \Pi_i
$$
 such that each $\Pi_i$ is a plane.
\begin{notation}
\emph{$n = \deg S_0 = \deg S_1$.}
\end{notation}

Let $\pi_0 \doteq f|_{S_0}  : S_0 \to \pp^2$ be the generic projection of
$S_0$ to $\pp^2$. In this case, the ramification curve also
degenerates into a union of $\ell$ lines
$$
R_0 = \bigcup_{i=1}^{\ell} L_i,
$$
and thus the degenerated branch curve is
of the form $$\pi_0(R_0) = B_0 = \bigcup_{i=1}^{\ell} l_i,$$ where
$\l_i = \pi_0(L_i)$.
\begin{notation}
\emph{$\ell = \deg R_0 = \deg B_0$.} % = \frac{1}{2}\deg B$ (by Condition \ref{cond3}).}
\end{notation}

Since $R_0$ is an  arrangement of lines in $\pp^N$, these lines
can intersect each other.

\begin{notation}
\emph{$m' = $ the number of points $\{P_i\}_{i=1}^{m'}$
which lie on more than one line $L_j$.}
\end{notation}
 For a point $x \in L_i$ (or
$x \in l_i$) let $v(x)$ be the number of distinct lines in $R_0$ (or $B_0$) on which $x$
lies. For example, if $x \in \{P_i\}_{i=1}^{m'}$, then $v(x) > 1$.

\begin{notation}
\emph{ Denote by $P = \{x \in B_0 : v(x) > 1\}$, and let $p_i =
\pi_0(P_i) \in B_0$ (note that $v(p_i) = v(P_i)$). Denote $P'
\doteq \{p_i\}_{i=1}^{m'}$.}
\end{notation}

\begin{remark}
\emph{ Note also that $P' = \{p_i\}_{i=1}^{m'} \varsubsetneq P$,
since there are points (called {\em parasitic intersection}
points; see the explanation in Subsection \ref{subsecParasitic})
which are in $P$ but not in $\{p_i\}_{i=1}^{m'}$.}
\end{remark}

\begin{notation} \label{defM}
\emph{Recall that $R_0 = \cup L_i$. Define the set of lines
$$
M \doteq \{L_i \in R_0 :\mbox{ there is only one point } x\in L_i
\mbox{ such that } v(x) > 1\}.
$$
For each $l \in M$, choose a point $y_l \in l$ s.t. $v(y_l)=1$.
Denote $$Y \doteq \{y_l\}_{l \in M}; $$   the set of points is
called the set of 2--point.}
\end{notation}

We recall the definition of  $\widetilde{B}_n$, since the local
fundamental group of many of the singular points of $B_0$ is
strongly related to this group.

\begin{defi} \label{defBn} \emph{(1) Let $X,Y$ be two half-twists in the braid group $B_n = B_n(D,K)$
(see Subsection \ref{subsecBMF} for the notation $D,K$). We say
that $X,Y$ are \emph{transversal} if they are defined by two
simple paths $\xi, \eta$ which intersect
transversally in one point  different from their ends.\\
(2) Let $N$ be the normal subgroup of $B_n$ generated by conjugates
of $[X,Y]$, where $X,Y$ is a transversal pair of half-twists.
Define
$$\widetilde{B}_n = B_n/N.$$ Let $x_1,...,x_{n-1}$ be the standard generators of $B_n$.
Equivalently, we can define $$\tB_n = B_n/\langle\!\langle
[x_2,x_3^{-1}x_1^{-1}x_2x_1x_3]\rangle\!\rangle$$ for $n > 3$.
 Recall
that we can define on $B_n$ two natural
homomorphism: \\(i) $\deg\,:\,B_n \to \Z$ s.t. $\deg(\prod x_i^{n_i}) = \sum n_i$.\\
(ii) $\sigma\,:\,B_n \to S_n$ s.t. $\sigma(x_i) = (i\,\,i+1)$.
For properties of $\widetilde{B}_n$ see, for example, \cite{Mo},\cite{RobbT}, \cite{Te2}.\\\\
(3) The following group plays an important role in finding a
presentation of a fundamental group of the complement of the
branch curve. Define, as in \cite{denis}, the group
$$
\tB_n^{(2)} \doteq \{(x,y) \in \tB_n \times \tB_n, deg(x) =
deg(y), \sigma(x) = \sigma(y)\}. $$}
\end{defi}

\begin{defi} \label{defQ} \emph{
Recall that for $p \in P' = \{p_i\}_{i=1}^{m'}$, we denote by $G_p$ the local fundamental group (see Definition \ref{defDeg}(iv)). Define the following set: \begin{center} $Q \doteq \{ p \in \{p_i\}_{i=1}^{m'} :$ there
exists an epimorphism of $ \tB^{(2)}_{v(p)} \twoheadrightarrow
G_p,\,$ and $ v(p) > 3\}$ \end{center} and denote $$|Q| = m.$$}
\end{defi}

Thus, we have the following relations between the sets of points:
 $$Q \doteq \{x_j\}_{j = 1}^m \subset P' \doteq \{p_i\}_{i=1}^{m'}
\subset P.$$

\begin{remark}\emph{
The definition of $Q$ is not meaningless: there are singular
points $p \in \{p_i\}_{i=1}^{m'},$\\$\, v(p)~>~3$ which occur during
the (described above) degeneration process have an epimorphism
$\tB^{(2)}_{v(p)} \twoheadrightarrow G_p$, where $G_p$ is the
local fundamental group associated to $p$. For example, let $p_6$
(resp. $p_5$) a singular point of $S_0$ called a $6-$point (resp.
$5-$point) which is locally an intersection of 6 (resp. 5) planes
at a point, whose regeneration is described at \cite{Mo}
\cite[Definition 4.3.3]{MoRoTe} (resp. \cite{Fr1}). Then $G_{p_i}
$ is isomorphic to a quotient of $\tB^{(2)}_i$ for $i=6,5$. For
the $4-$point $p_4$ (s.t. its regeneration is described at
\cite{Mo}, \cite{RobbT}), we get that $G_{p_4} \simeq \tB_4$, which
is also a quotient of $\tB^{(2)}_4$. }
\end{remark}

\begin{defi}[$\Graph_{S_0}$] \label{defGraphS0}
\emph{We define the graph $\Graph_{S_0}$.
 The vertices are the $m'$ points $\{P_i\}_{i=1}^{m'}$ and the set $Y$ of 2--points. Two vertices in $\Graph_{S_0}$ are connected by an edge if both of the
corresponding points on $R_0$ lie on a unique line $L_i \subset R_0$.}
\end{defi}

 We want to defined boundary and interior (non--boundary) vertices of $\mbox{Vertices}(\Graph_{S_0})$.

\begin{defi} \label{defIntBound}
\emph{There are triples of edges $e_i, e_j, e_k \in $
Edges($\Graph_{S_0}$) such that their union is a triangle
$T_{ijk}$. We define the following subset of the vertices of
$\Graph_{S_0}$, called the \emph{boundary vertices}.
$$
V_B = \{p \in \mbox{Vertices}(Graph_{S_0}) : p \text{ \mbox{is not
a vertex of two (or more) different triangles }} T_{ijk}\}.
$$
Note that $Y \subset V_B$. Also, denote
$$
V_B^c = \mbox{Vertices}(Graph_{S_0}) \setminus V_B.
$$
The subset $V_B^c$ is called the  \emph{interior vertices}.
}
\end{defi}

\begin{example}\emph{
The interior and boundary points, for the degeneration of the Hirzebruch
surface $F_{1,(2,2)}$:
\begin{center}
\epsfig{file=./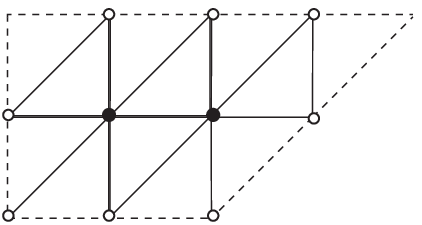}\\ \small{{Figure 1 :
 The white vertices are the boundary vertices $V_B$ and the black vertices are interior vertices $V_B^c$.}}
\end{center}
}
\end{example}

\begin{remark} \label{remRestrict}
\emph{We have two inequalities which relate the above constants.}

(1) \emph{ Assume that the degree of the ramification curve of $S$ is
$2\ell$ (which will be one of the conditions imposed on $S$. see condition \textbf{{(3)}} in Definition \ref{defSimDegSur}). we have that $2\ell \geq 2n-2$ (which follows from the
fact that $S$ is a ramified cover of $\pp^2$) or}
$$
n \leq \ell + 1.
$$

(2) \emph{Denote by $\overline{m}$ the number of vertices in
$\Graph_{S_0}$ (see Definition \ref{defGraphS0}), by
$\overline{\ell}$ the number of edges in $\Graph_{S_0}$ and
$\overline{n}$ the number of triangles in $\Graph_{S_0}$. Note that
$ n > \overline{n}$, $\overline{m} > m$ and $\overline{\ell} =
\ell$. By  the Euler characteristic for planar graphs we get
$\overline{m} - \overline{\ell} + \overline{n} = 1$ or}
$$
\overline{n} -1 < \ell  - m.
$$
\end{remark}

\subsection[Conditions on the planar degeneration]{Conditions on the planar degeneration} \label{subsecCondDeg}

In this subsection, we introduce the following conditions  that our projective surface $S$
has to satisfy.

\begin{defi} \label{defSimDegSur}
A surface $S = S_1$ is called \emph{simply--degeneratable surface} if it satisfies the following three conditions:
\end{defi}

\begin{condition*} \textbf{\emph{(1)}} %\label{cond1}
\emph{$S$ admits a  planar degeneration, i.e., $\exists \rho : \tilde{S} \rightarrow \Delta$ s.t. $\rho^{-1}(1) = S_1 = S$ , $\rho^{-1}(0) = S_0$ and $S_0$ is a union of
planes.}
\end{condition*}

\begin{condition*} \textbf{\emph{(2)}}   %\label{cond2}
\emph{ The degeneration of $S$ to $S_0$ induces a degeneration of the
branch curve $B$ to $B_0$ that satisfies the following condition:
For a plane curve $C \subset \pp^2$, let $Sing(C)$ be the
singularities of $C$ w.r.t. a fixed generic projection $C \to \pp^1$.
Denote $Sing_0 \doteq Sing(B_0)$, $Sing_t =
Sing(B_t),\,t \neq 0$. For each $p \in Sing_0$ consider a small
enough neighborhood $U_p$ of $p$ as in Definition
\ref{defDeg}(iv). We require that the set $Sing(B) \setminus
\bigcup\limits_{p \in Sing_0} (U_p \cap Sing(B))$ contains only
simple branch points.}
\end{condition*}

\begin{condition*} \textbf{\emph{(3)}} % \label{cond3}
\emph{The degeneration of the branch curves $B \to
B_0$ is two-to-one (see \cite{MoTe4},\cite{RobbT} for further details
on two-to-one degenerations of branch curves).}
\end{condition*}

\begin{remark}
\emph{We show that the three conditions above are independent.  As we are interested in planar degenerations, we look at the following examples when the degeneration already satisfies Condition \textbf{{(1)}}.\\
 (1) A degeneration of a smooth cubic surface in $\pp^3$ (whose branch curve is a sextic with six cusps) into a union of three planes, all of them intersecting in a line, is an example of a surface which does not satisfy Conditions
\textbf{{(2)}}, \textbf{{(3)}}.\\
(2) A degeneration of a union of three generic hyperplanes in $\pp^3$ (whose branch curve is a union of three lines, intersecting at three different points) into a union of three hyperplanes meeting at a single point, is an example of a degeneration that satisfies Condition \textbf{{(2)}} but not \textbf{{(3)}}.\\
(3) A degeneration of a cone over a smooth conic in $\pp^2$ into a union of two hyperplanes is an example of a degeneration that satisfies Condition \textbf{{(3)}} but not \textbf{{(2)}}.\\
(4) An example of planar degeneration that satisfies Conditions
\textbf{{(2)}}, \textbf{{(3)}} is a degeneration of a smooth quadric in $\pp^3$ into a union of two hyperplanes.}
\end{remark}

We  now define a fourth condition imposed on the degeneration: that every boundary vertex has at least
one interior vertex as a ``neighbor'' (see definition \ref{defIntBound}).

\begin{defi} \label{defEmbDegSur}
A surface $S$ is called \emph{embedded--degeneratable surface} if it is a simply--degeneratable surface and it  satisfies the following fourth condition:
\end{defi}

\begin{condition*} \textbf{\emph{(4)}} %\label{cond4}
\emph{We require that for each boundary vertex $p \in V_B$ there exist an interior vertex $p^c \in V_B^c$ and an edge
$e_p \in $ Edges($\Graph_{S_0}$) s.t. $e_p$ connects $p$ and $p^c$.}
\end{condition*}

\begin{example}
\emph{The fourth condition is imposed in order to avoid degenerations as depicted in the following picture.
 Figure 2.[1] presents a degeneration with no interior points ($V_B^c = \emptyset$). Figure 2.[2] presents a degeneration with not enough neighboring interior vertices (though $V_B^c \neq \emptyset$).
 By definition, the dashed border lines
are  not a part of  Edges($\Graph_{S_0}$).
 \begin{center}
 \epsfig{file=./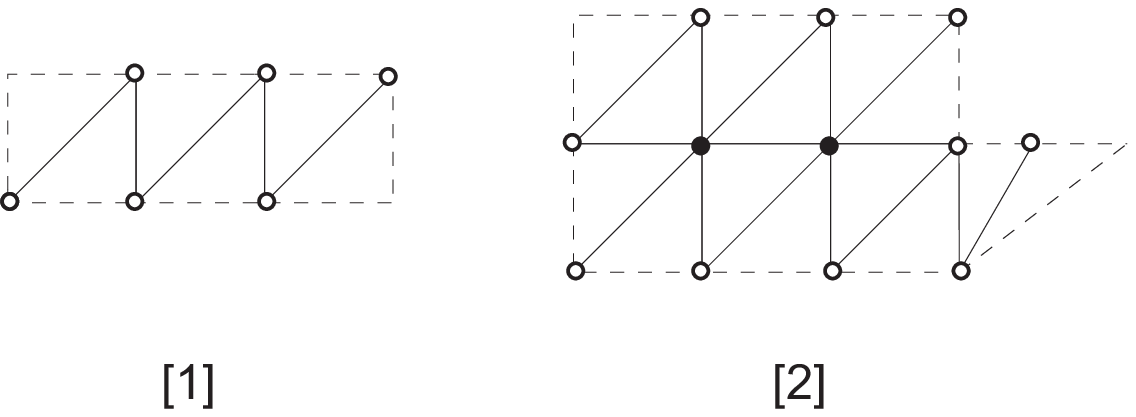}\\ \small{{Figure 2 : Degenerations which do no satisfy Condition \textbf{{(4)}}.\\ The white vertices are the boundary vertices $V_B$ and the black vertices are interior vertices $V_B^c$.}}
 \end{center}
 The following degeneration is a degeneration that satisfies all the four conditions.
 \begin{center}
 \epsfig{file=./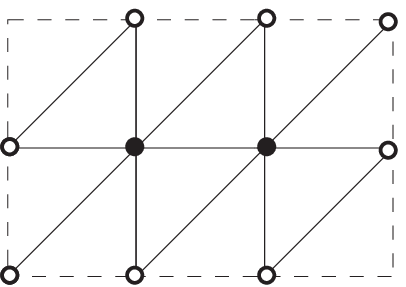}\\ \small{{Figure 3 : Allowable degeneration of $\pp^1 \times \pp^1$.\\
  The white vertices are the boundary vertices $V_B$ and the black vertices are interior vertices $V_B^c$.}}
 \end{center} }
\end{example}

\begin{remark}
\emph{Assume that there are interior vetrices in a given degeneration ($V_B^c \neq \emptyset$). Then in the case of a \emph{toric degeneration} any degeneration always satisfies Condition \textbf{{(4)}}. However, this is not known for general degenerations.}
\end{remark}

\subsection[Necessary condition on $\pi_1(\C^2 - B)$]{Necessary condition on $\pi_1(\C^2 - B)$} \label{subsec2MainThm}
 We present here the main result for this section -- under which conditions is $\pi_1(\C^2 - B)$  a quotient
 of  $\tB^{(2)}_n$. We begin with two examples:

\begin{example}\emph{
 For the degeneration of the Hirzebruch
surface $F_{1,(2,2)}$:
\begin{center}
\epsfig{file=./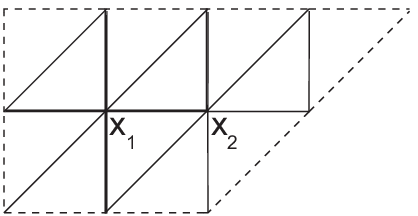}\\ \small{{Figure 4 : The degeneration
of  $F_{1,(2,2)}$. Note that the dashed border lines are
\\ not a part of the ramification curve}}
\end{center}
we have $Q = \{x_1,x_2\}$ (see \cite{Fr2} for the calculation of the local fundamental groups) and $m=2, \ell = 13, n= 12$, as depicted above. Note that in this case $\ell - m \leq n-1$.}\end{example}
\begin{example}\emph{For the degeneration of the surface $\CP^1 \times \mathbb{T}$
(where $\mathbb{T}$ is a torus), embedded with respect to the linear system
$(2,3)$
\begin{center}
\epsfig{file=./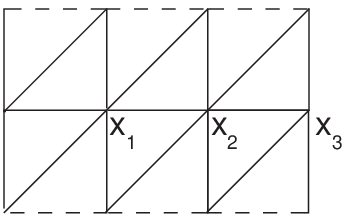}\\ \small{{Figure 5 : The degeneration
of  $(\CP^1 \times \mathbb{T})_{(2,3)}$. Note that the dashed
horizontal border lines are
\\ not a part of the ramification curve, and the vertical are. The vertical border lines are identified.}}
\end{center}
We have $m=3, \ell = 15, n= 12$ (as $Q = \{x_1,x_2,x_3\}$), and
the inequality $\ell - m \leq n-1$ is not satisfied. }
\end{example}

These observations lead us to state the following Theorem.
\begin{thm} \label{thmMainCondition}
Let $S$ be a smooth embedded--degeneratable projective surface. Let $B \subset \C^2$ its branch curve with respect to a generic projection, $B_0$ its degeneration. Denote $\ell = \frac{1}{2}\deg B, n =  \deg S, m = $ number of singular points $p$ of $B_0$ whose local fundamental group is a quotient of $\tB^{(2)}_{v(p)}$ (see Definitions \ref{defDeg}, \ref{defQ}).

If $\ell - m \leq n-1$ then there exist an epimorphism
$\tB^{(2)}_n \to G = \pi_1(\C^2 - B)$.
\end{thm}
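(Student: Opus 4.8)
The plan is to use the braid monodromy factorization (BMF) of the branch curve $B$ together with the regeneration rules to produce an explicit presentation of $G = \pi_1(\C^2 - B)$, and then to exhibit $G$ as a quotient of $\tB^{(2)}_n$ by checking that every relator coming from the BMF already holds in $\tB^{(2)}_n$. First I would fix a generic projection and a system of geometric generators $\Gamma_1, \Gamma_1', \dots, \Gamma_n, \Gamma_n'$ of $\pi_1(\C^2 - B)$ coming from the fact that the degree-$2\ell$ branch curve regenerates from the line arrangement $B_0 = \bigcup l_i$; the ``$2$-to-$1$'' hypothesis (Condition \textbf{(3)}) is exactly what gives the pairing of generators and is what underlies the definition of $\tB^{(2)}_n$ as a subgroup of $\tB_n \times \tB_n$. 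The relations in $G$ split into two families: (a) the \emph{local} relations, one family for each singular point $p$ of $B_0$, which by the regeneration rules are precisely the defining relations of the local fundamental group $G_p$; and (b) the \emph{parasitic} (global) relations, which are commutator relations forced by parasitic intersection points and which, after regeneration, are commutators of the form $[\Gamma, \Gamma']$ for generators attached to disjoint lines.

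Next I would set up the comparison homomorphism. There is a natural surjection $\tB^{(2)}_n \twoheadrightarrow$ (a suitable ``abstract'' version of $G$ built only from the degeneration data) sending paired half-twist generators of $\tB^{(2)}_n$ to the paired geometric generators $\Gamma_i, \Gamma_i'$; this is well-defined because the braid relations and the transversality commutator $[x_2, x_3^{-1}x_1^{-1}x_2x_1x_3]$ defining $\tB_n$ hold among the geometric generators (these are the standard ``global'' relations that always appear). To conclude $\tB^{(2)}_n \twoheadrightarrow G$ I must check that each remaining relator of $G$ is a consequence of the relations of $\tB^{(2)}_n$. For a point $p \in Q$ (there are $m$ of them, each with $v(p) > 3$) the local group $G_p$ is a quotient of $\tB^{(2)}_{v(p)}$ by hypothesis, and the inclusion of the corresponding $v(p)$ strands gives a map $\tB^{(2)}_{v(p)} \to \tB^{(2)}_n$ under which the local relations map into $\tB^{(2)}_n$; so those relations are automatically satisfied. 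The delicate points are (i) the singular points of $B_0$ \emph{not} in $Q$ — $2$-points, $3$-points, intersections with $v(p) \le 3$, and branch points — for which I would invoke the explicit regeneration descriptions cited in the paper (e.g. the $3$-point, $4$-point references) to see that their local relations are also consequences of the $\tB_n$-relations and the commutator relations; and (ii) the parasitic commutators, which need a counting argument.

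The main obstacle is exactly the bookkeeping that the hypothesis $\ell - m \le n-1$ is designed to control. The graph $\Graph_{S_0}$ has $\overline{\ell} = \ell$ edges, and by Remark \ref{remRestrict}(2) the number of triangles satisfies $\overline{n} - 1 < \ell - m$; combined with $\ell - m \le n - 1$ this forces $\overline{n} \le n - 1 < n$, i.e. there are strictly fewer ``small'' triangles in the degeneration graph than planes. The role of this inequality is to guarantee that after discarding the $m$ good vertices (those in $Q$, whose local relations we already know are implied) the remaining local and parasitic relations are \emph{enough} to collapse the would-be extra generators: each line $L_i$ not already ``pinned down'' by a $Q$-point contributes, via its at-most-one non-trivial intersection (the set $M$ and the $2$-points $Y$) a relation identifying a pair of generators, and the count $\ell - m \le n-1$ ensures these identifications suffice to bring the number of independent generator-pairs down to $n$, matching $\tB^{(2)}_n$. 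So the heart of the proof is: (1) reduce $G$ to the presentation with generators indexed by the lines of $B_0$ and relators = local relations $\cup$ parasitic commutators; (2) show all relators survive in $\tB^{(2)}_n$ after the identifications forced by $M$, $Y$, and $Q$; (3) use the Euler-characteristic inequality plus $\ell - m \le n-1$ to verify that these identifications actually reduce the generating set to the $n$ generators of $\tB^{(2)}_n$, giving the surjection. I expect step (2) for the non-$Q$ vertices of low valence, and the precise matching in step (3), to be where the real work lies.
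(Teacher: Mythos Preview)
Your proposal has the right rough shape but contains a genuine gap in the mechanism by which the hypothesis $\ell - m \le n-1$ is used, and a confusion about the direction of the argument.

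First, the logical direction. To exhibit an epimorphism $\tB^{(2)}_n \twoheadrightarrow G$ you need two things: (i) a map on generators whose image generates $G$, and (ii) the relations of $\tB^{(2)}_n$ hold in $G$. You state (ii) correctly, but then write that you must ``check that each remaining relator of $G$ is a consequence of the relations of $\tB^{(2)}_n$''. That is the wrong direction: it would show $G \twoheadrightarrow \tB^{(2)}_n$, not the converse. What you actually need for (i) is to show that $G$ can be generated by $n-1$ pairs of generators (not $n$), matching the $n-1$ generator pairs of $\tB^{(2)}_n$.

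Second, and more seriously, your account of how the generator count drops from $\ell$ pairs to $n-1$ pairs is not the correct mechanism. You say ``each line $L_i$ not already pinned down by a $Q$-point contributes, via its at-most-one non-trivial intersection (the set $M$ and the $2$-points $Y$) a relation identifying a pair of generators''. But the $2$-points in $Y$ only give relations of the form $\Gamma_i = \Gamma_{i'}$, which identify the two members \emph{within} a pair; they do not eliminate generator pairs. The reduction comes from the points of $Q$: at each $x \in Q$ the local group $G_x$ has $v(x)$ pairs of Van Kampen generators but is a quotient of $\tB^{(2)}_{v(x)}$, which is generated by only $v(x)-1$ pairs. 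Hence one pair at $x$ is redundant. The paper organizes this via the \emph{dual graph} $S_0^*$ (vertices $=$ planes, edges $=$ lines $L_i$): deleting one edge at each $x\in Q$ yields a subgraph with $\ell - m$ edges on $n$ vertices, and the lemma is that this subgraph can be chosen to be a spanning tree precisely when $\ell - m \le n-1$. The spanning tree is what identifies the surviving $n-1$ generator pairs.

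Finally, you never use Condition \textbf{(4)} (the embedded--degeneratable hypothesis), but it is essential. One must show that \emph{every} generator of $G$ lies in some $G_x$ with $x \in Q$; otherwise the map from $\tB^{(2)}_n$ cannot be surjective. The paper proves this as a separate lemma, and the argument genuinely requires that every boundary vertex has an interior neighbour, together with the connectedness forced by the spanning tree. Your Euler-characteristic manipulation (deriving $\overline{n} \le n-1$) does not substitute for this: $\overline{n} < n$ is already known independently of the hypothesis and plays no role in the proof.
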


The proof of this theorem will be given in Subsection \ref{subsecProof}.

\begin{example} \label{exampleKnownSur}
\emph{We give here a  list of known surfaces, satisfying Theorem \ref{thmMainCondition}:
$\pp^1 \times \pp^1$ embedded with respect to the linear system $al_1 + bl_2$, where $a,b>1$ (see \cite{Mo}),
 the Veronese surface $V_n,\,n \geq 3$ (see \cite{MoTe4}, \cite{MoTe5}), the Hirzebruch surfaces $F_1$ (embedded with respect to the linear system $aC + bE_0$, where $a,b>1,\, C, E_0$ generate the Picard group of $F_1$, see \cite{Fr2})
 and $F_2$ (embedded with respect to the linear system $2C + 2E_0$ (see \cite{AFT2}), and a few families of $K3$ surfaces (see \cite{FrTe}).}
\end{example}

Before proving the theorem, we want to review a few surfaces for
which the condition in Theorem \ref{thmMainCondition} does not
hold, presenting two conjectures.

For the first conjecture we need the following definition.

\begin{defi} \label{defArtinTilde}
\emph{(1) Given an Artin group $A$, generated by $\{x_i\}_{i=1}^r,\, r>2$, we define the following quotient:
$$
\widetilde{A} = A/\langle\!\langle
[x_2,x_3^{-1}x_1^{-1}x_2x_1x_3]\rangle\!\rangle.
$$
(2) Let $\deg$ be the following epimorphism: $\deg\,:\,A \to \Z$ s.t. $\deg(\prod x_i^{n_i}) = \sum n_i$.
Assume there exists an epimorphism from $A$ to the symmetric group $\sigma\,:\,A \to S_n$. In this case, define
$$\widetilde{A}^{(2)} \doteq \{(x,y) \in \widetilde{A} \times \widetilde{A}, \deg(x) =
\deg(y), \sigma(x) = \sigma(y)\}. $$}
\end{defi}

The  first  conjecture on the structure of $G = \pi_1(\C^2 -
B)$ is similar to Theorem~\ref{thmMainCondition}, when $Q \neq
\emptyset$ but does not contain enough points.

\begin{conjecture} \label{conjStructure}
\emph{ For a smooth embedded-degeneratable surface $S$ s.t. $|Q| = m
\geq 1$ and $\ell - m > n - 1 $  (i.e. does not satisfy the condition in Theorem
\ref{thmMainCondition})  one can associate a graph $T$ and an Artin
group $A(T)$ such that $G$ is a quotient of
$\widetilde{A(T)}^{(2)}$}
\end{conjecture}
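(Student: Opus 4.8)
The plan is to follow the same braid-monodromy-and-regeneration strategy that proves Theorem~\ref{thmMainCondition}, but to track carefully \emph{which} commutation relations survive when the counting inequality fails. First I would fix the combinatorial input: from the planar degeneration $S_0 = \bigcup \Pi_i$ build the graph $T \doteq \Graph_{S_0}$ of Definition~\ref{defGraphS0}, and let $A(T)$ be the Artin group on generators indexed by $\mbox{Edges}(\Graph_{S_0})$, with a commutation relation $x_ix_j = x_jx_i$ for disjoint edges and a braid relation $x_ix_jx_i = x_jx_ix_j$ for edges sharing a vertex. Because $\overline{\ell}=\ell$ (Remark~\ref{remRestrict}(2)), this is exactly the group whose quotient $\widetilde{A(T)}^{(2)}$ appears in the statement; I would then verify that $\widetilde{A(T)}^{(2)}$ is well-defined, i.e. that the maps $\deg$ and $\sigma$ of Definition~\ref{defArtinTilde} really do extend to $A(T)$ (the $\deg$ map always does; for $\sigma$ one uses that each edge, being a half-twist, maps to a transposition compatible with the regeneration).

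Next I would set up the generators of $G = \pi_1(\C^2-B)$ via the braid monodromy factorization (Subsection~\ref{subsecBMF}) of the regenerated branch curve $B$, and invoke the regeneration rules (Subsection~\ref{subsecRegRule}) to read off relations. The key point is the local-global structure already used for Theorem~\ref{thmMainCondition}: at each interior vertex and each point of $P'$ the regeneration contributes local relations, and at the points of $Q$ (the $m$ points with $v(p)>3$ whose $G_p$ is a quotient of $\tB^{(2)}_{v(p)}$) one gets the full $\tB^{(2)}$-type relations, whereas at the remaining singular points of $B_0$ one only gets the weaker Artin relations plus the single transversality commutator of Definition~\ref{defArtinTilde}. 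Thus I would construct an explicit surjection $A(T) \twoheadrightarrow G$ by sending each edge-generator to the corresponding geometric generator, check that the braid and commutation relations of $A(T)$ hold in $G$ (the braid relations come from the $v(p)=3$ triangles $T_{ijk}$, the commutations from disjoint edges), and then observe that the transversality commutator $[x_2,x_3^{-1}x_1^{-1}x_2x_1x_3]$ is killed in $G$ by the regeneration at \emph{every} singular point, so the map factors through $\widetilde{A(T)}$. Finally, the two-to-one degeneration of Condition~\textbf{(3)} pairs the branch curve with a ``second copy,'' which is precisely the mechanism producing the fiber-product structure; matching the $\deg$ and $\sigma$ invariants on the two factors yields a factorization through the subgroup $\widetilde{A(T)}^{(2)} \subset \widetilde{A(T)}\times\widetilde{A(T)}$.

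The main obstacle, and the reason this is stated only as a conjecture, is the global consistency of this surjection when $\ell - m > n-1$. In the proof of Theorem~\ref{thmMainCondition} the inequality $\ell - m \le n-1$ is exactly what guarantees that the ``parasitic intersection'' relations (Subsection~\ref{subsecParasitic}) arising from the $\ell-m$ vertices outside $Q$ can be absorbed into the relations coming from the $n$ sheets, so that $G$ becomes a quotient of the \emph{single} group $\tB^{(2)}_n$. When the inequality is reversed there are too many such surviving commutators to collapse into the $\tB^{(2)}_n$ presentation, and one must instead keep the whole edge-indexed Artin presentation $A(T)$ rather than the sheet-indexed braid group $B_n$. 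Establishing that \emph{no further} relations beyond those of $\widetilde{A(T)}^{(2)}$ are forced — equivalently, that the parasitic and regeneration relations are all consequences of the Artin relations and the single transversality relation on $T$ — is precisely the delicate counting/commutator-identity step that I do not expect to close in general, and this is where a full proof would require either a new invariant bounding the kernel or a case analysis over the allowable degenerations permitted by Condition~\textbf{(4)}.
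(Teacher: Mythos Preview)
The statement you are addressing is Conjecture~\ref{conjStructure}, and the paper offers no proof of it whatsoever---it is presented purely as a conjecture, supported only by references to examples such as $\CP^1\times\mathbb{T}$ and $\mathbb{T}\times\mathbb{T}$ (see the paragraph following the conjecture). So there is no ``paper's own proof'' to compare against, and your proposal is necessarily an attempt at something the authors themselves did not claim to establish. You correctly recognize this in your final paragraph.

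That said, your sketch has two substantive issues worth flagging. First, the direction of the ``hard step'' is inverted. To show $G$ is a \emph{quotient} of $\widetilde{A(T)}^{(2)}$ you must verify that every defining relation of $\widetilde{A(T)}^{(2)}$ holds in $G$; you do \emph{not} need to rule out further relations in $G$. Your last paragraph frames the obstacle as ``establishing that no further relations beyond those of $\widetilde{A(T)}^{(2)}$ are forced,'' which is the injectivity direction and is irrelevant to the conjecture as stated. The genuine difficulty---and presumably the reason it remains a conjecture---is rather that when $\ell-m>n-1$ one cannot reduce to a generating set on which the $\widetilde{A(T)}^{(2)}$ relations are visibly satisfied, because the spanning-tree argument of Lemma~\ref{lemSubTree} and Lemma~\ref{lemGenG} breaks down.

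Second, your route from $\widetilde{A(T)}$ to $\widetilde{A(T)}^{(2)}$ is not quite right. The group $\widetilde{A(T)}^{(2)}$ is a \emph{subgroup} of $\widetilde{A(T)}\times\widetilde{A(T)}$, not a quotient of $\widetilde{A(T)}$, so a surjection $\widetilde{A(T)}\twoheadrightarrow G$ does not automatically ``factor through'' $\widetilde{A(T)}^{(2)}$. In the proof of Theorem~\ref{thmMainCondition} the map $\tB_n^{(2)}\to G$ is built directly by sending the pair $(x_i,x_{i'})$ to the doubled generators $(\Gamma_i,\Gamma_{i'})$ coming from Condition~\textbf{(3)}; the analogous construction for $\widetilde{A(T)}^{(2)}$ would require checking the full relation set of that fiber product against the Van Kampen relations, not descending from a single copy of $\widetilde{A(T)}$. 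Finally, note that the conjecture says only that \emph{some} graph $T$ can be associated to $S$; identifying $T$ with $\Graph_{S_0}$ is a natural guess but is not asserted in the paper.
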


The condition above means that $S$ has a planar degeneration with
2:1 degeneration of the branch curve, whose degeneration has
singular points in the set $Q$, but not enough. For example, See
\cite[Conjecture 3.7]{AFT} (on the embedding of $\CP^1 \times
\mathbb{T}$ with respect to the linear system $(m,n), m,n>1$) and
\cite{ATV} (for the degeneration of $\mathbb{T} \times
\mathbb{T}$).

We now review a few surfaces for which the set $Q$ is empty.

\begin{conjecture} \label{conjStructure2}
\emph{For a simply--degeneratable surface $S$ such that the set $Q
= \emptyset$ (i.e. the degeneration has only boundary points (see
Definition \ref{defIntBound})) and such that $G = \pi_1(\C^2 - B)$
has ``enough" commutation relations (see Remark \ref{remEn}), we
conjecture that one can associate a graph $T$ and an Artin group
$A(T)$ such that $G$ is a quotient of $A(T)$.}
\end{conjecture}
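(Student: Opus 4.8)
\textbf{Proof proposal for Theorem~\ref{thmMainCondition}.}

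The plan is to build the epimorphism $\tB^{(2)}_n \to G$ by comparing two braid monodromy factorizations (BMFs): the one produced by regenerating the degenerated branch curve $B_0$ into $B$, and the one underlying the standard presentation of $\tB^{(2)}_n$. The global strategy is local-to-global: I first record what each singular point of $B_0$ contributes to a presentation of $G = \pi_1(\C^2 - B)$ after regeneration, then assemble these local contributions into a global presentation using the degeneration graph $\Graph_{S_0}$, and finally exhibit the comparison map. Since $S$ is embedded--degeneratable, Conditions \textbf{(1)}--\textbf{(4)} are in force, and by Condition \textbf{(2)} the only singularities of $B$ outside the neighborhoods $U_p$ of singular points of $B_0$ are simple branch points, so they contribute no relations beyond those coming from the $U_p$'s and the van Kampen setup.

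First I would set up generators. The degenerated curve $B_0 = \bigcup_{i=1}^{\ell} l_i$ is a line arrangement of $2\ell$ lines after the $2{:}1$ regeneration (Condition \textbf{(3)}), giving standard generators $x_1,\dots,x_{n-1}$ (and their conjugate pairs) indexed so that the two natural homomorphisms $\deg$ and $\sigma$ of Definition~\ref{defBn} are respected; these are exactly the maps defining $\tB^{(2)}_n$ as a fiber product inside $\tB_n \times \tB_n$. The relations of $G$ split into two families. The relations coming from the points of $Q$ (the $m$ points with $v(p) > 3$ whose local group $G_p$ is a quotient of $\tB^{(2)}_{v(p)}$) are, by Definition~\ref{defQ}, already consequences of the defining relations of $\tB^{(2)}_n$, hence impose nothing new on the comparison map. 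The relations coming from the remaining singular points --- the $\ell - m$ ``missing'' contributions, together with the triangle and commutation relations read off from the edges and triangles of $\Graph_{S_0}$ --- must all be shown to hold in $\tB^{(2)}_n$, i.e.\ to be consequences of the transversal-commutator relation $[X,Y]$ and the fiber-product conditions.

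The combinatorial heart is Remark~\ref{remRestrict}: from $n \le \ell+1$ and the Euler relation $\overline{n} - 1 < \ell - m$, the hypothesis $\ell - m \le n - 1$ forces $\overline{n} \le n - 1 < \ell$, which pins down the number of independent triangle relations in $\Graph_{S_0}$ relative to the number of generators. I would use this count to show that the regenerated BMF of $B$ contains, up to the already-accounted-for $\tB^{(2)}$-relations, no relation forcing a quotient strictly smaller than $\tB^{(2)}_n$; concretely, every relation produced by van Kampen's theorem applied to $\C^2 - B$ along the degeneration either lies in the normal closure of the transversal commutators (giving the $\tB_n$-quotient in each factor) or is one of the two fiber-product conditions $\deg(x)=\deg(y),\ \sigma(x)=\sigma(y)$. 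Condition \textbf{(4)} enters precisely here: because every boundary vertex is adjacent to an interior vertex, each generator participates in at least one triangle $T_{ijk}$, so the local $\tB^{(2)}_{v(p)}$-structure at interior points propagates along edges to cover all generators, leaving no ``free'' generator that would enlarge the target beyond $\tB^{(2)}_n$.

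The main obstacle will be the bookkeeping in this propagation step: one must verify that the regeneration rules (Subsection~\ref{subsecRegRule}) applied edge-by-edge along $\Graph_{S_0}$ glue the local presentations consistently, so that a generator shared by two adjacent singular points receives compatible relations from both. This is where Condition \textbf{(4)} and the inequality $\ell - m \le n-1$ jointly do the work, and where the argument is most delicate; the remaining verifications (that simple branch points contribute nothing new, that $\deg$ and $\sigma$ are well defined on the quotient, and that the map is surjective because the $x_i$ generate $G$) are routine consequences of the van Kampen computation and the definitions of $\deg,\sigma$.
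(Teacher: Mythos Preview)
The statement you were asked to address is Conjecture~\ref{conjStructure2}, which the paper explicitly presents as a \emph{conjecture}: there is no proof of it anywhere in the paper, only supporting examples (Example~\ref{exampSurNotSat}) and the computations in Section~\ref{sec3} for $\pp^1 \times C_g$. Your proposal, however, is labelled and written as a proof of Theorem~\ref{thmMainCondition}, which is a different statement with different hypotheses (embedded--degeneratable with $\ell - m \le n-1$, versus simply--degeneratable with $Q = \emptyset$). So as a response to the assigned statement there is nothing to compare: the paper offers no proof, and you have not attempted one either.

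If I read your proposal instead as an attempt at Theorem~\ref{thmMainCondition}, it still diverges substantially from the paper's argument and has a genuine gap. The paper's proof is organised around the \emph{dual graph} $S_0^*$ (Notation~\ref{NotaDual}) and the graph-theoretic Lemma~\ref{lemSubTree}: the hypothesis $\ell - m \le n-1$ is used to build a spanning subtree $T_0^*$ by deleting one edge at each point of $Q$, and this tree is what pins down a set of $2(n-1)$ generators for $G$. The crucial step is then Lemma~\ref{lemGenG}, which shows (using Condition~\textbf{(4)} to rule out cycles that cannot be broken) that every generator of $G$ already appears as a generator of some $G_x$ with $x \in Q$; only after this does one write $G$ as a quotient of $G_Q = (\ast_{x\in Q} G_x)/\langle R_{I_Q}\cup R_{Par}\rangle$ and define the map from $\tB_n^{(2)}$ on generators. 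Your outline never constructs this spanning tree or anything playing its role; the inequality $\ell - m \le n-1$ is invoked only through the Euler-characteristic estimate of Remark~\ref{remRestrict}, which by itself does not give the reduction of generators. The step you flag as ``the main obstacle'' --- the propagation of the local $\tB^{(2)}_{v(p)}$ structure along edges so that every generator is covered --- is exactly the content of Lemmas~\ref{lemSubTree} and~\ref{lemGenG}, and your sketch does not supply a mechanism for it beyond asserting that Condition~\textbf{(4)} and the inequality ``jointly do the work''. Without the spanning-tree construction (or an equivalent device) there is no way to conclude that the van Kampen relations outside $Q$ are already consequences of the $\tB_n^{(2)}$ relations plus $R_{Par}$, so the surjection $\tB_n^{(2)} \twoheadrightarrow G$ is not established.
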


\begin{remark} \label{remEn}
\emph{Recall that $G$ has the natural monodromy epimorphism
$\varphi: G \to S_n$ ($n = \deg(S)$), defined by sending each
generator to a transposition, describing the sheets which are
exchanged. By ``enough" commutation relations we mean that for
$a,b \in G$ such that $\varphi(a), \varphi(b)$ are disjoint
transpositions, then $a,b$ commute.}
\end{remark}

\begin{example} \label{exampSurNotSat}
\emph{(1) The surface $\pp^1 \times \pp^1$ (embedded with respect
to the linear system $l_1 + bl_2, b\geq 1$ and denoted as $(\pp^1
\times \pp^1)_{(1,b)}$) and the Hirzebruch surface $F_1$ (embedded
with respect to the linear system $aC + E_0, a\geq 1$ and denoted
as $F_{1,(1,a)}$)  were investigated in \cite{AmS} and do not
satisfy condition \textbf{{(4)}} (see Definition \ref{defEmbDegSur}) and also the main condition in
Theorem \ref{thmMainCondition}. In both of these cases, however,
the fundamental group $\pi_1(\C^2 - B)$ is a quotient of the braid
group $B_n$, or equivalently a quotient of the Artin group $A(T)$,
where $T$ is depicted in the following figure.
\begin{center}
\epsfig{file=./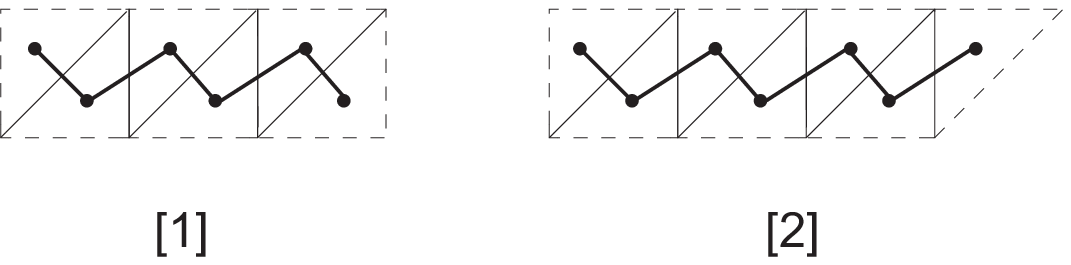}\\ \small{Figure 6 : The
degeneration of $(\pp^1 \times \pp^1)_{(1,3)}$ (figure [1]) and
$F_{1,(1,3)}$ (figure [2]) and their associated graphs $T$.}
\end{center}
(2) The Veronese surface $S = V_2 \subset \pp^5$ and its
associated fundamental group $\pi_1(\C^2 - B_S)$ were investigated
in \cite{MoTeGal}, \cite{Za}. Also in this example $V_2$ and its
degeneration do not satisfy the necessary conditions. Note that
this is an exceptional case to the previous example, as
$\pi_1(\C^2 - B_S)$ is not isomorphic to a quotient of $A(T)$,
where $T$ is depicted in the following figure.
\begin{center}
\epsfig{file=./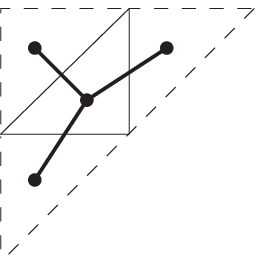}\\ \small{Figure 7 : The
degeneration of $V_2$
 and its associated graph $T$.}
\end{center}
This can be seen from \cite{Za}, as $\pi_1(\pp^2 - B_S)$ is
generated by four generators. The fact that $G = \pi_1(\C^2 -
B_S)$ does not have commutation relations is the reason we require
``enough" commutation relations in Conjecture \ref{conjStructure2}
(indeed, the condition  in Remark \ref{remEn} is not satisfied
w.r.t. the map $G \to S_4$).}

 \emph{Note that the Veronese surface $V_2$ is exceptional
also for other statements in classical algebraic geometry -- it
is, for example, the only counter example to the Chisini's
conjecture.}

\end{example}

\subsubsection[Virtual solvability of $G$]{Virtual solvability of
$G$}\label{subsecVirt} For surfaces whose planar degeneration
satisfy the condition introduced in Theorem
\ref{thmMainCondition}, the conjecture on the structure  (and the virtual solvability) of $G$
proposed in \cite{Te1} is correct. This is due to the fact that by
\cite[Remarks 3.7, 3.8]{denis}, if there is an epimorphism
$\tB_n^{(2)} \twoheadrightarrow G$, then $G$ is virtually
solvable. However, these conditions imply that the class of
embedded--degeneratable surfaces is rather small; for example, if
$\pi_1(S)$ contains a free group of rank 2, then $G$ is not
virtually solvable (see \cite{L}). These type of surfaces is the
main topic of Section 3.

 Also, by \cite[Corollary 4.9, Proposition 4.11]{L} one
can compute explicitly rank$(H_1(X_{Gal},\Z))$ (where $X_{Gal}$ is
the Galois cover of X. see subsection \ref{subSecGalCov}), and if
$X$ is simply connected, one can also find $\pi_1(X_{Gal})$.

\subsection[Proof of the main theorem]{Proof of the main theorem} \label{subsecProof}
 We first cite the Theorem we want to prove (Theorem \ref{thmMainCondition}):\\\\
\emph{
Let $S$ be a smooth embedded--degeneratable projective surface. Let $B \subset \C^2$ its branch curve with respect to a generic projection, $B_0$ its degeneration. Denote $\ell = \frac{1}{2}\deg B, n =  \deg S, m = $ number of singular points $p$ of $B_0$ whose local fundamental group is a quotient of $\tB^{(2)}_{v(p)}$ (see Definitions \ref{defDeg}, \ref{defQ}).}

\emph{If $\ell - m \leq n-1$ then there exist an epimorphism
$\tB^{(2)}_n \to G = \pi_1(\C^2 - B)$.\\\\}

\begin{proof}

We introduce the following
notation.

\begin{notation} \label{NotaDual}  \emph{Let $S_0 = \cup^n \Pi_i$ be the degeneration of $S$ as above, $R_0$ the degenerated ramification curve.
We build the graph $S^*_0 = (E,V)$ called the \textsl{dual graph
to }$S_0$ by the following procedure (see also \cite[pg. 532]{MoRoTe}).
each plane $\Pi_i$ corresponds to a vertex $v_i \in V,\, 1 \leq i
\leq n$, and each line $\Pi_k \cap \Pi_j = L_i \in R_0$ corresponds to
an edge $e_i \in E,\, 1 \leq i \leq \ell$, connecting the vertices
$v_k$ and $v_j$. For example
\begin{center}
\epsfig{file=./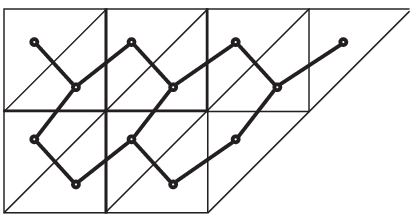}\\ \small{{Figure 8 : The dual graph
$S^*_0$ of the degeneration of   $F_{1,(2,2)}$.}}
\end{center}}
\end{notation}

We first prove the following lemma:
\begin{lemma} \label{lemSubTree}
There exists a spanning subtree of $S^*_0$ with $\ell - m$ edges if
and only if $\ell - m \leq n-1$.
\end{lemma}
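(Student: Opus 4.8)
The plan is to reduce the statement to elementary graph theory, using two standard facts: (a) a finite tree with $e$ edges has exactly $e+1$ vertices; and (b) a connected finite graph on $n$ vertices contains a subtree with exactly $e$ edges for every integer $0\le e\le n-1$. The one preliminary observation that actually requires the geometry is that the dual graph $S^*_0=(E,V)$ is \emph{connected}: by Definition \ref{defDeg} every non-central fibre $S_t$, $t\neq 0$, is irreducible, hence connected, and flatness and properness of $\rho$ force the central fibre $S_0$ to be connected as well; the dual graph of a connected union of planes meeting pairwise along lines is then connected. Thus $S^*_0$ is a connected graph with exactly $n$ vertices (one per plane $\Pi_i$) and exactly $\ell$ edges (one per line of $R_0$). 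I would also record that $\ell-m\ge 0$, which is built into the numerical set-up (compare Remark \ref{remRestrict}(2), where $\overline n-1<\ell-m$).

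For the ``only if'' direction I would argue directly: if $T$ is a spanning subtree of $S^*_0$ with $\ell-m$ edges, then by fact (a) it has $\ell-m+1$ vertices; since the vertex set of $T$ is contained in that of $S^*_0$, this gives $\ell-m+1\le n$, i.e. $\ell-m\le n-1$.

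For the ``if'' direction, assume $0\le \ell-m\le n-1$. Since $S^*_0$ is connected on $n$ vertices it has a spanning tree $T_0\subseteq S^*_0$, which has exactly $n-1$ edges. I would then prune $T_0$ repeatedly: a tree on at least two vertices has a leaf, and deleting a leaf together with its unique incident edge yields a tree with one fewer vertex and one fewer edge. Iterating produces subtrees of $S^*_0$ with $n-1,n-2,\dots,1,0$ edges; stopping at the step with $\ell-m$ edges (possible because $0\le\ell-m\le n-1$) gives the required spanning subtree. This establishes fact (b) in our situation and finishes the proof.

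The lemma is essentially bookkeeping, so I do not expect a genuine obstacle. The only point needing the geometry is the connectedness of $S^*_0$ (equivalently the irreducibility of $S$), and the only possible confusion is terminological: a subtree with $\ell-m<n-1$ edges does not meet every vertex, so ``spanning'' must be understood as spanning precisely the set of vertices it contains. With this convention, both implications follow immediately from (a) and (b).
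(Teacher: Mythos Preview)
Your argument is internally correct under the reading of ``spanning subtree'' that you adopt, and the forward implication matches the paper's one-line observation. However, the paper's proof of the reverse implication is doing something substantively stronger than your leaf-pruning, and that extra content is exactly what the main theorem uses.

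In the paper, the tree $T^*_0$ is not obtained by pruning an arbitrary spanning tree. Rather, one starts from the full dual graph $S^*_0$ (with $n$ vertices and $\ell$ edges) and deletes, for each point $x$ in $Q$ (or a subset of $Q$ of the right size when $\ell-m<n-1$), a single edge $e_x$ chosen from the edge-set $L^*_x$ dual to the lines through $x$; one also arranges that edges deleted at neighbouring points of $Q$ are disjoint. The geometric input is that the planar structure of the degeneration forces the local picture around two neighbouring points of $Q$ to look as in the paper's Figure~9, so these deletions can be made without disconnecting the graph. The outcome is a genuine spanning tree on all $n$ vertices, with the deleted edges indexed by (a subset of) $Q$.

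This refinement is not cosmetic: immediately after the lemma the paper matches each deleted edge $e_x$ to the pair of generators $\gamma,\gamma'$ of $G_x$ that can be expressed in terms of the remaining $2(v(x)-1)$ generators via the epimorphism $\widetilde B^{(2)}_{v(x)}\twoheadrightarrow G_x$. Your pruning could remove an edge incident to a vertex lying over a point \emph{not} in $Q$, and then there is no local relation available to eliminate the corresponding pair of generators, so Lemma~\ref{lemGenG} and the construction of the map $\widetilde B^{(2)}_n\to G_Q$ would break down. In short: your proof settles the bare graph-theoretic statement, but the paper's construction is what makes the lemma usable, since it controls \emph{which} edges are removed.
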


\begin{proof}
If $n$ is the number of vertices in a connected graph, then if the
number of edges is greater than $n-1$, then there are cycles in
the graph. Therefore, if there is a spanning connected subtree of
$S^*_0$ with $\ell - m$ edges, then $\ell - m \leq n-1$.

For the other direction, assume first that $\ell - m = n-1$. For $ x \in \{p_i\}_{i=1}^{m'}$ we
denote by $L_x$ the set of lines
such that $x$ lies on them, and let $L^*_x$ be the set of edges in
$S^*_0$ corresponding to $L_x$. We create a new graph $T^*_0 =
(E_T, V_T)$ from $S^*_0$. The vertices of $T^*_0$ will be the same
as $S^*_0$, but for each $x \in Q$ we erase one edge $e_x$ from
$S^*_0$, such that $e_x \in L^*_x$. Since for each $x \in Q$,
$v(x)>3$, we demand that if there exist $x,y \in Q$ such that $x$
and $y$ are neighbors (i.e. there exist a line $L$ s.t. $x,y \in
L$), then $e_x \cap e_y = \varnothing$. We choose the $e_x$'s
satisfying the above requirements. Let us note that $m$ can be equal to $1$, so the choice of $y$ above is irrelavant.
 We now show that  the resulting graph $T^*_0$ is connected.

Note that if $x,y$ are neighbors, then locally the graphs $S_0$
and $S_0^*$ would look as in the following figure:

\begin{center}
\epsfig{file=./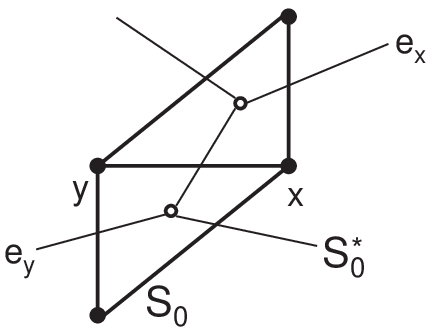}\\ \small{{Figure 9 : local
neighborhood of two vertices}}
\end{center}
since the degeneration is planar. Thus we can choose $e_x$ and
$e_y$ as depicted in Figure 7 and the resulting graph will be
connected. Now one can proceed by induction to prove
connectedness. Note that the number of edges in $T^*_0$ is $\ell - m$. Since $\ell - m = n-1,\, T^*_0$ is
a spanning subtree of $S^*_0$, by definition.

If $\ell - m < n-1$ there exist $k \in \mathbb{N}, k<m$ such that
$\ell - k = n-1$. We now choose $k$ points from $Q$, and proceed
as before to construct $T^*_0$.
\end{proof}

For example, the following figure presents a possible spanning
subtree $T^*_0$ for the degeneration of $F_{1,(2,2)}$:
\begin{center}
\epsfig{file=./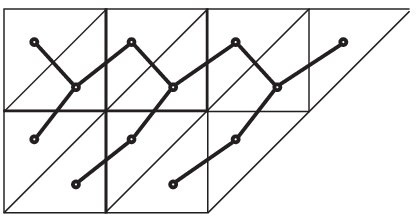}\\ \small{Figure 10}
\end{center}
By Lemma \ref{lemSubTree}, there exists a spanning subtree $T^*_0$.
We refine the construction  of $T^*_0$ in the following way. By
our assumptions, for each $x \in Q$, there exists an epimorphism
$\tB^{(2)}_{v(x)} \to G_x$, where $G_x$ is the local fundamental
group of $x$. As can be seen from Definition \ref{defBn},
$\tB^{(2)}_{v(x)}$ is generated by pairs $\{\G_i,
\G'_i\}_{i=1}^{v(x)-1}$, when the $\G_i$'s are the standard generators of $\tB_{v(x)}$. However, by the Van Kampen Theorem (see Theorem \ref{thmVK}), using the fact that the degeneration reduces the
degree of the branch curve by half (by Condition \textbf{{(3)}} on $S$. See Definition \ref{defSimDegSur}), we see
that $G_x$ is generated by pairs of (topological) generators $\{\g_i,
\g_{i'}\}_{i=1}^{v(x)}$. Thus, we can choose to express two generators
$\g, \g' \in \{\g_i,\g_{i'}\}_{i=1}^{v(x)}$ by the other generators s.t. the pair
$\g, \g'$ corresponds to a degenerated line $L \in
L_x$ and its corresponding edge $e_x \in L^*_x$ will be the edge
which we erase (possibly after renumeration of the generators of
$\tB^{(2)}_{v(x)}$ such that the erased edge will satisfy the
demands imposed on it as in Lemma \ref{lemSubTree}) in order to
get $T^*_0$.

\begin{remark} \label{remErase}
Note that  for all $x \in \{p_i\}_{i=1}^{m'}$ we erase at most
one edge from $L_x^*$.
\end{remark}
It is clear that for each $x \in Q$ there exists an embedding $\tB^{(2)}_{v(x)} \hookrightarrow G$.
Therefore $G_x \simeq \tB^{(2)}_{v(x)} / R_{v(x)} \hookrightarrow
G$ where $R_{v(x)} = \mbox{ker}(\tB^{(2)}_{v(x)} \to G_x)$.

\begin{remark} \label{remEmbPos}
\emph{The embedding $\tB^{(2)}_{v(x)} \hookrightarrow G$ might be possible only after a conjugation  of the
generators $\G_i, \G_{i'}$ by a certain power of  $\sigma_i$ (which is a generator of the braid group). See,
for example, \cite[Subsection 6.1.2]{denis}.}
\end{remark}

Let us now look at the points $x \in P \cup Y,\, x \not\in Q$:
these are the points whose corresponding local fundamental group
is not $\tB^{(2)}_{v(x)}$. We start, in the following subsection, with the most important case, and later we remark on two
more cases.

\subsubsection[Parasitic intersection points]{Parasitic intersection points} \label{subsecParasitic}
Each point $x \in B_0$ such that $v(x) = 2$ is an intersection of
two lines $l_i, l_j$. This kind of point, when $x \in P, x \not\in
\{p_i\}_{i=1}^{m'}$ is called a \emph{parasitic intersection
point}. These points are not a projection of singular points of
$R_0$, hence we get them as a result of the projection to $\CP^2$.
During the regeneration process (see Subsection
\ref{subsecRegRule}), each line is doubled, so eventually we get 4
nodes in $R$, and thus the local fundamental group is $\{\G_i,
\G_{i'}, \G_j, \G_{j'} :
[\G_{\underline{i}},(\G_{\underline{j}})_{\alpha}] = 1\}$, where
$\G_{\underline{i}} = \G_i$ or $\G_{i'}$ and $\alpha \in B_n$.
Examining these relations together, it can be seen easily that
$\alpha$ can be written as a product of generators which commute
with $\G_{\underline{i}}$ (see \cite[Thm IX.2.2]{MoTe1}, since
this arrangement of lines is a partial arrangement to what is
called in \cite[section IX, $\S 1$]{MoTe1} dual to generic).
Therefore, from the parasitic intersection points we induce the
commutator relations between different generators
$\G_{\underline{i}}, \G_{\underline{j}}$ such that the
corresponding lines $L_i, L_j$ do not have a vertex in common.

\begin{notation} \label{notParasitic} \emph{
Denote the set of all relations induced from the parasitic
intersection points as $R_{Par}$.}
\end{notation}

\begin{remark} \label{remTwoTypes}
\emph{Let us consider two more types of points which can appear during
the regeneration process:}
\begin{enumerate}
\item [{(I)}] \emph{First, recall that each $y \in Y$ is a 2-point: it is
on a line, which is the intersection of two planes. During the
regeneration process, this line is regenerated into a conic. If
$y$ is on the line $L_i$, whose corresponding edge in $T_0^*$ is
$e_i$, then we induce the relation $\G_i = \G_{i'}$ in $G$, where
$\{\G_i, \G_{i'}\}$ is the corresponding generators of $e_i$.
Explicitly, the local fundamental group is $\{\G_i, \G_{i'} : \G_i
= \G_{i'}\} \simeq \Z$. This is due to the fact that the line
$L_i$ is regenerated to a conic such that the branch point of the
conic (which corresponds to $y$) induces the relation $\G_i =
\G_{i'}$.}
\item [{(II)}] \emph{The second case is that $x \in \{p_i\}_{i=1}^{m'}, x
\not\in Q$ and thus $x$ is a projection of a singular point of
$R_0$ (if $x$ were not a projection of a singular point of $R_0$,
then the projection would not be a generic one). Note that
$v(x) > 2$. Let us assume that the local configuration of lines exiting from $x$ is as
in the following figure, when the lines are numerated by their order of appearance in the degeneration process:
\begin{center}
\epsfig{file=./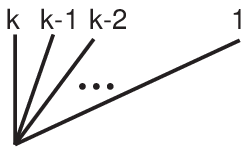} \\ \small{{Figure 11 : local
neighborhood of a $k$--point}}
\end{center}
In this case, the local braid monodromy factorization was
calculated in \cite{Fr1} and one can induce easily the local
fundamental group associated for this point(see e.g. \cite[Subsection 4.5]{MoRoTe}). Note that other numerations can appear also in
non-planar degenerations, such as in the degeneration of $\CP^1
\times C_g$ ($g\geq 1$. See Subsection \ref{secP1_C1} and Remark
\ref{remReg})}.
\end{enumerate}
\end{remark}

\begin{remark}
\emph{Recall that some of the singular points of a generic
projection $B \to \pp^1$ do not regenerate from $B_0$. By
Condition \textbf{{(2)}} on $S$ (see Definition \ref{defSimDegSur}), these singular points would be branch
points. These branch points only induce relations of the form
$\{\G_j = \G_{j'}\}$ when $\G_j, \G_{j'}$ correspond to the same
line $l_j$ in the degenerated branch curve $B_0$ (see \cite{RobbT}
for further explanations).}
\end{remark}

 We now examine what is the relation
between the local fundamental groups $G_x$ and the group $G$. It
is clear that for each $x$, $G_x \hookrightarrow G$, and in fact
$G \simeq (\underset{x \in P \cup Y}{\ast} G_x )/ \langle R_I
\rangle$ where $R_I$ is the identification of the same generators
in $G$ belonging to different $G_x$'s. Since we find a presentation of $G$ (and
resp. of the groups $G_x$) by means of the Van-Kampen theorem, we
can say that $G$ is generated by $2l$ (resp. $2v(x)$) generators.
However, by the definition of $Q$ and $T_0^*$ the number of generators for $G$
can be reduced to $2(l-m)$.

Let us examine two cases:
\begin{enumerate}
\item [{(i)}] Assume that $\ell-m = n-1$. By definition, for each $x
\in Q$, $G_x$ is isomorphic to a quotient of $\tB^{(2)}_{v(x)}$
(where this $G_x$ is generated by $2(v(x) - 1)$ generators
$\{\g_{x,i},\g_{x,i'}\}_{i=1}^{v(x)-1}$).
\begin{lemma} \label{lemGenG}
Let $\G \in G$ be a generator. So there exists $x \in Q$ s.t. $\G$
is a generator of $G_x$.
\end{lemma}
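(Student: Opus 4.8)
The plan is to show that every topological generator of $G$, coming from the Van Kampen presentation associated to the degenerated branch curve $B_0$, can be traced back to a generator living inside one of the local fundamental groups $G_x$ with $x \in Q$. Recall that $G$ is generated by $2\ell$ generators $\{\G_i, \G_{i'}\}_{i=1}^{\ell}$, one pair per line $l_i \subset B_0$ (equivalently, per edge $e_i$ of $S^*_0$); after the reduction coming from the tree $T^*_0$ and the definition of $Q$, only $2(\ell-m) = 2(n-1)$ of these survive as independent generators, namely the pairs attached to the edges of the spanning subtree $T^*_0$ constructed in Lemma~\ref{lemSubTree}. So it suffices to prove that for every edge $e_i$ of $T^*_0$, the pair $\{\G_i,\G_{i'}\}$ is a pair of generators of $G_x$ for some $x \in Q$.

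First I would fix an edge $e_i \in \mbox{Edges}(T^*_0)$, corresponding to a line $L_i \subset R_0$, and consider the singular points $x \in \{p_i\}_{i=1}^{m'}$ (together with the 2-points in $Y$) lying on $L_i$. Each such $x$ contributes a local fundamental group $G_x$ which is generated by the pairs of generators attached to the lines through $x$; in particular $\{\G_i,\G_{i'}\}$ appears among the generators of $G_x$ for every $x$ on $L_i$. The point is now to show that at least one such $x$ lies in $Q$. If some $x$ on $L_i$ has $v(x) > 3$ with an epimorphism $\tB^{(2)}_{v(x)} \twoheadrightarrow G_x$, then $x \in Q$ and we are done. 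The cases to rule out are: $x \in Y$ (a 2-point), $v(x) = 2$ (a parasitic intersection, but these are not projections of singular points of $R_0$ and the corresponding line carries no ``new'' generator relative to the tree), or $2 < v(x) \le 3$, i.e. a $3$-point. Here I would invoke Condition~\textbf{(4)}: every boundary vertex has an interior neighbor, and the construction of $T^*_0$ in Lemma~\ref{lemSubTree} erases, for each $x \in Q$, exactly one edge from $L_x^*$ (Remark~\ref{remErase}) --- so the surviving edges of $T^*_0$ are exactly those not erased, and each must be ``anchored'' at an interior vertex which, being a vertex of two or more triangles, forces $v \ge 4$ and membership in $Q$ (using the Remark after Definition~\ref{defQ}, which guarantees that $k$-points with $k > 3$ arising in the degeneration carry the epimorphism $\tB^{(2)}_k \twoheadrightarrow G_x$).

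The key combinatorial step, then, is: for every edge $e$ of $T^*_0$, at least one of its two endpoints-in-$\Graph_{S_0}$-language, i.e. at least one of the two points of $R_0$ joined by the corresponding line, lies in $Q$. I would argue this by the following dichotomy on the line $L$ corresponding to $e$. If $L$ carries a point of $Q$, done. Otherwise $L$'s singular points are all $2$- or $3$-points or $2$-points of $Y$; but $L$ appears in the graph $\Graph_{S_0}$ as an edge between two vertices, and since $e$ was kept in $T^*_0$ while each $x\in Q$ only erased one edge from $L^*_x$, any line through a point of $Q$ still has an edge in $T_0^*$ anchored at that $Q$-point --- here I would carefully bookkeep, using $\ell - m = n-1$ (so that $T^*_0$ has exactly $n-1$ edges and is a genuine spanning tree) and the inductive choice of erased edges $e_x$ with $e_x \cap e_y = \varnothing$ for neighbors $x,y\in Q$, to conclude that the number of tree-edges NOT touching $Q$ is too small to disconnect, forcing the claimed statement.

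The main obstacle I anticipate is exactly this last point --- showing that Condition~\textbf{(4)} plus the edge-erasing bookkeeping genuinely forces every kept edge to be incident to a $Q$-point --- since a priori a spanning tree of $S^*_0$ could contain a ``peripheral'' edge far from all the interior $4$-or-more-points. I expect the resolution to hinge on a more careful reading of the erasing procedure in Lemma~\ref{lemSubTree}: when we pass from $S^*_0$ to $T^*_0$ we remove exactly one edge per $x \in Q$, so the kept edges are $S^*_0$ minus an $m$-element ``transversal'' of $\{L^*_x\}_{x\in Q}$; combined with Remark~\ref{remRestrict}(2) (which relates $\ell - m$ to the number of triangles via the Euler characteristic of $\Graph_{S_0}$) and Condition~\textbf{(4)}, one should be able to show the kept edges all emanate from the interior part of the graph, which is precisely $Q$. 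If a residual edge genuinely escapes $Q$, then the statement would need to be weakened to ``$\G$ is a generator of $G_x$ for some $x$'' and the subsequent argument patched accordingly; but I expect Condition~\textbf{(4)} to be exactly what prevents this.
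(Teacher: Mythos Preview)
You have correctly isolated the target statement --- that every surviving edge of $T^*_0$ has, in $\Graph_{S_0}$-language, at least one endpoint in $Q$ --- but you do not prove it; you yourself flag this as the main obstacle and leave it at ``I expect Condition~\textbf{(4)} to be exactly what prevents this.'' The two routes you sketch both stall. Condition~\textbf{(4)} only says that every boundary vertex has \emph{some} interior neighbor; it does not say that every edge is incident to an interior vertex, so an edge joining two boundary vertices is not excluded by Condition~\textbf{(4)} alone. Your Euler-characteristic count from Remark~\ref{remRestrict}(2) bounds global quantities but does not localize to a given edge. And the implication ``interior vertex $\Rightarrow v\ge 4 \Rightarrow$ membership in $Q$'' is asserted, not argued.

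The paper's proof is by contradiction and uses a \emph{cycle-chasing} argument in the dual graph $S_0^*$, which is the missing idea. Suppose $\G_0$ corresponds to a line $l_0$ whose two vertices $p_1,p_2$ in $P'\cup Y$ both lie outside $Q$. One of them, say $p_2$, lies in $P'$ and is an ``inner'' point of $S_0$ (otherwise $l_0$ would be a border line and not part of $B_0$). Then in $S_0^*$ the planes around $p_2$ form a cycle $C$. Since $T_0^*$ is a spanning tree of $S_0^*$, the cycle $C$ must be broken; but by construction (and Remark~\ref{remErase}) edges are erased only at points of $Q$, one edge per point. Hence some neighbor $p_2^1$ of $p_2$ lies in $Q$, and we erase an edge of $C$ at $p_2^1$. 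This produces a strictly larger cycle $C_1$ through $p_2$ and $p_2^1$, which again must be broken by a \emph{new} $Q$-point $p_2^2$, and so on. Since $|Q|=m$ is finite, the process terminates in a cycle $C_j$ that cannot be broken --- contradicting that $T_0^*$ is a tree. The case where both $p_1,p_2\in P'$ is handled the same way, with two such unresolved cycles. This iterative cycle argument is what actually exploits the ``one erased edge per $x\in Q$'' rule, and it is absent from your proposal.
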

\begin{proof}
Assume that there is a generator $\G_0$ of $G$ such that it is not a
generator of $G_x$ for every $x \in Q$. This generator corresponds
to a line $l_0$ in $B_0$. By our construction, there are two
points $p_1,p_2$ on $l_0$ that belong to the set $P' \cup Y$, and by
assumption, both of them do not belong to $Q$ (recall that $P'$ is
the set of singular points of $B_0$ which are images of singular
points of $R_0$ and that $Y$ is the set of $2-$points).
We now look at two cases:\\
(I) One of the points belongs to $Y$.\\
Let $p_1 \in Y,\,p_2 \in P'$. The point $p_2$ is an ``inner" point (see Condition \textbf{{(4)}} in Definition \ref{defEmbDegSur}),
i.e., it does not lie on the border of the degenerated surface
$S_0$, as in this case $l_0$ would not induce a generator (recall that
we do not consider the border lines as part of $B_0$). Thus, the
local neighborhood of $p_1, p_2$ in $S_0^*$ looks as in the
following figure:
\begin{center}
\epsfig{file=./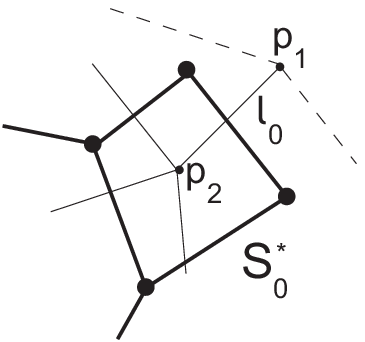}\\ \small{Figure 12 : Local
neighborhood of $p_1,p_2$.}
\end{center}
Since there is a spanning subtree $T_0^*$ (by Lemma \ref{lemSubTree}), one of the neighboring vertices to
$p_2$ has to be in $Q$, as otherwise, in the process of the construction of $T_0^*$,
we could not ``terminate" the circle $C$ whose center is the point $p_2$. Denote this vertex by $p_2^1$ and delete an edge from the circle $C$ (see the figure below).
\begin{center}
\epsfig{file=./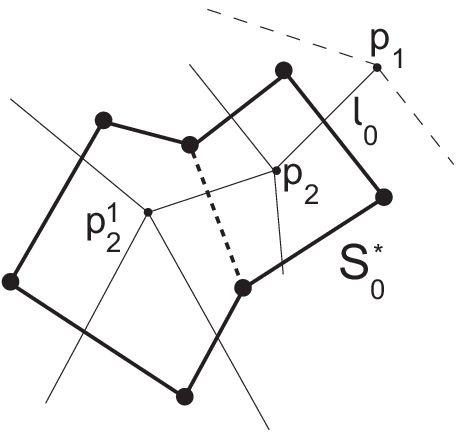}\\ \small{Figure 13 : Local
neighborhood of $p_1,p_2,p_2^1$.\\ The dashed edge is the erased
edge when trying to eliminate the circle containing $p_2$.}
\end{center}
However, now we have a new circle $C_1$ containing the points $p_2,p_2^1$.
Thus there is another point $p_2^2$ in $Q$, neighbor to $p_2$ or to $p_2^1$, as we have to
terminate the circle $C_1$, and we continue as above. But since this process is finite
(there are finite number of points in $Q$), eventually we couldn't erase one of the edges from the
circle $C_j$ (containing the points $p_2,p_2^1,...,p_2^j$). This is due to the fact that we would not find ``new" points in $Q$ s.t. one
of the corresponding edges to them can be erased. Thus we get a contradiction. \\
 (II) Two of the points belong to $P'$. We get a contradiction as in the first case, since now we have two circles
$C$ and $C'$, each around every point, which eventually could not
be resolved.
\end{proof}
 Thus the union of
all the generators of these $G_x$'s (s.t. we identify the same
generators in $G$) is the set of the $2(n-1)$ generators of $G$.
We know that
$$
G \simeq \bigg(\big(\underset{x \in Q}{\ast} G_x \big)/ \langle R_{I_Q} \rangle
\ast \big(\underset{x \not\in Q}{\ast} G_x \big)\bigg)/ \langle R_{rest}
\rangle,
$$
Where $R_{I_Q}$ ($R_{rest}$) is the set of relations
identifying identical generators in different local fundamental
groups for $x \in Q$ (resp. the set of the other relations, e.g., induced from identifying identical generators in different local fundamental
groups for $x \not\in Q$, or from the local fundamental groups of parasitic intersection points or from  extra branch points). But the generators of
$G$ are the generators of $\underset{x \in Q}{\ast} G_x$, and thus
$$
G \simeq \bigg(\big(\underset{x \in Q}{\ast} G_x \big)/ \langle R_{I_Q} \cup
R_{Par} \rangle \bigg) / \langle  R_{rest'} \rangle.
$$
Denoting $G_Q \doteq (\underset{x \in Q}{\ast} G_x )/ \langle
R_{I_Q} \cup R_{Par} \rangle$ it is enough to prove that there is
an epimorphism $\tB_n^{(2)} \twoheadrightarrow G_Q$.

Numerate the generators of $G_Q$ by $\{\G_i,\G_{i'}\}_{i=1}^{n-1}$ associated to the edges $E_T = \{t_i\}_{i=1}^{n-1}$ in the tree $T_0^*$, and let $\{x_i,x_{i'}\}_{i=1}^{n-1}$ be the generators of $\tB_n^{(2)}$. Define the epimorphic map
$$
\alpha : \tB_n^{(2)} \twoheadrightarrow G_Q,$$$$
x_i \mapsto \G_i, x_{i'} \mapsto \G_{i'}
$$
(possibly after conjugation. see Remark \ref{remEmbPos}).
We have to prove that the relations in $\tB_n^{(2)}$ hold in $G_Q$.
 Since $G_x \simeq \tB^{(2)}_{v(x)} / R_{v(x)}$ for each $x \in Q$ it is
 clear that the relations in $\tB_n^{(2)}$ of the form $aba = bab$ hold in $G_Q$.
 The commutator relations which are not induced from the commutator relations in $\tB^{(2)}_{v(x)},\,x \in Q$
 hold in $G_Q$ as the set of relations in $G_Q$ includes the set $R_{Par}$.

\item [{(ii)}]  Assume that $\ell-m < n-1$. Again, there exist $k \in
\mathbb{N},\, k < m$ such that $\ell-k = n-1$. Previously, in
Lemma \ref{lemSubTree}, we chose $k$ points from $Q$ to construct
$T^*_0$. Therefore we can continue as above. Note that by Remark
\ref{remErase}, even if the point $p_2$ (in Lemma \ref{lemGenG})
will have two neighboring vertices $\in Q$, we still could not
resolve the circle $C$.
\end{enumerate}

\end{proof}

\begin{remark} \label{remRestrict2}
\emph{Recall that for a degeneratable surface $S$ that satisfies all the conditions,
we denoted  $n =  \deg S,\,m = $ number of singular points $p$ of $B_0$ whose local fundamental group is a quotient of $\tB^{(2)}_{v(p)},$ and by $\overline{m}$ the number of vertices in
$Graph_{S_0}$ (see Definition \ref{defGraphS0}).
By the restrictions imposed by Remark \ref{remRestrict}
and Theorem \ref{thmMainCondition}, we can bound $\ell =
\frac{1}{2}$deg$B$. Explicitly, for $B$ to be a branch of
curve of degree $2\ell$ of a embedded--degeneratable surface s.t.
$G$ would be virtually solvable, the following inequalities should
be satisfied: }
\begin{equation} \label{eqnRest}
max(n,\overline{m}+n) < \ell + 1 \leq m+n.
\end{equation}
\end{remark}

\begin{remark}
\emph{As can be seen from  Subsection \ref{subsecVirt}, Example \ref{exampleKnownSur} and Remark \ref{remRestrict2}, the complete classification of smooth
surfaces whose planar degeneration
satisfy the condition introduced in Theorem
\ref{thmMainCondition} is not yet known, though some new restrictions are now clearer (e.g. inequality (\ref{eqnRest})). Moreover, \cite[Section 8]{CCFR2} has found some restrictions on surfaces admitting planar degeneration with some specific
conditions on the singularities of the degenerated surface. These conditions do shed some light on our class of  surfaces. For example, every singular point in the degenerated surface, denoted in \cite[Definition 3.5]{CCFR2} as  $E_m$-point ($m>3$), belongs to the set $Q$ (see Definition \ref{defQ}). Given a planar degeneration, \cite[Theorem 8.4]{CCFR2} imposes conditions on the square of the canonical class of the surface, when the degenerated surface has some specific singular points.  Certainly this theorem can be generalized to include more cases of singular points in the set $Q$ and
to the bigger classes of embedded--degeneratable surfaces. Moreover, \cite[Proposition 8.6]{CCFR2} states that for every surface there might be a birational model of it that is degeneratable into  a union of planes with mild singularities $p_i$ (s.t. the local fundamental group $G_{p_i}$ is known),
though it is not clear whether if this model is even simply--degeneratable (see Dentition \ref{defSimDegSur}).
 }

\emph{
 Note also that all the surfaces in Example \ref{exampleKnownSur} are simply connected, and this raises the conjecture that the desired class of surfaces is contained in the class of simply connected surfaces. Indeed, this is supported by that fact that if $S$ is a surface s.t.
$\pi_1(S)$ contains a free group of rank 2, then $S$ does not  satisfy the condition  in Theorem
\ref{thmMainCondition} (as $G$ is not virtually solvable). However, this is the subject of an ongoing research.
}
\end{remark}

\section[Non simply connected scrolls]{Non simply connected scrolls}
\label{sec3}

By \cite[Proposition 4.13]{L}, for a projective complex surface
$S$, if $\pi_1(S)$ is not virtually solvable, then $\pi_1(\CP^2 -
B)$ is not virtually solvable, where $B$ is the branch curve of
$S$ w.r.t. a generic projection. As  Liedtke \cite{L} points out,
for $S$ a ruled surface over a curve of genus $ > 1$ , $\pi_1(S)$
contains a free group of rank 2. Therefore, for such an $S$, there
does not exist a planar degeneration with enough ``good" singular
points (i.e. points in the set $Q$. See definition \ref{defQ}).
However, in the next section we examine what would be a possible
structure for $G = \pi_1(\C^2 - B)$ for such a surface.
Specifically, we consider the structure of this group when the set $Q$ is empty.

By Conjecture \ref{conjStructure}, the existence of
points in the set $Q$ would imply that $G$ would be a quotient of
$\widetilde{A}(T)^{(2)}$, where as in our case (see Thereom
\ref{thm1}), $G$ is a quotient of $A(T)$ (where $T$ is an
associated graph to the degeneration of $S$), as in Example \ref{exampSurNotSat}(1). This strengthens
Conjecture~\ref{conjStructure2}.

For the convenience of the reader, we begin with recalling the notions of the Braid Monodromy
Factorization (BMF) in subsection \ref{subsecBMF}. We then
investigate the surface $\pp^1 \times C_g$, where $C_g$ is a curve
of genus $g \geq 1$, and the corresponding fundamental group
$\pi_1(\C^2 - B_g)$, in subsections \ref{secP1_C1} and
\ref{secP1_Cg}. Using the results, we compute the fundamental
group of the Galois cover of these surfaces in subsection
\ref{subSecGalCov}.

\subsection[Background on Braid Monodromy Factorization]{Background on Braid Monodromy
Factorization} \label{subsecBMF}

Recall that computing the braid monodromy is the main tool to
compute fundamental groups of complements of curves. The reader
who is familiar with this subject can skip the following
definitions to Subsection \ref{secP1_C1}. We begin by defining the braid monodromy associated
to a curve.

Let $D$ be a closed disk in $ \mathbb{R}^2,$ \ $K\subset \Int(D),$
$K$ finite, $n= \#K$. Recall that the braid group $B_n(D,K)$ can
be defined as the group of all equivalent diffeomorphisms $\beta$
of $D$ such that $\beta(K) = K\,,\, \beta |_{\partial D} =
\text{Id}\left|_{\partial D}\right.$ (two
diffeomorphisms are equivalent
if they induce the same automorphism on $\pi_1(D - K,u)$). \\

\begin{defi} $H(\sigma)$ is a half-twist defined by
$\sigma$.\end{defi}

Let $a,b\in K,$ and let $\sigma$ be a smooth simple path in
$Int(D)$ connecting $a$ with $b$ \ s.t. $\sigma\cap K=\{a,b\}.$
Choose a small regular neighborhood $U$ of $\sigma$ contained in
$Int(D),$ s.t. $U\cap K=\{a,b\}$. Denote by $H(\sigma)$ the
diffeomorphism of $D$ which switches $a$ and $b$ by a
counterclockwise $180^\circ$ rotation and is the identity on
$D\setminus U$\,. Thus it defines an element of $B_n[D,K],$ called
{\it the half-twist defined by
$\sigma$ }.\\

Denote $[A,B] = ABA^{-1}B^{-1},\,\langle A,B\rangle =
ABAB^{-1}A^{-1}B^{-1}$. We recall  Artin's presentation of the
braid group:
\begin{thm} $B_n$ is generated by the half-twists $H_i$ of a sequence of paths
${\sigma_i}_{i=1}^{n-1}$ (such that $\sigma_i$ connected the $i^{th}$ and the $(i+1)^{th}$ points) and all the relations between $H_1,...,H_{n-1}$ follow
from:\begin{center} $[H_i,H_j] = 1\,\,$ if\,\,\,$
|i-j|>1$\\$\langle H_i,H_j\rangle = 1 \,\,if \,\,|i-j|=1$.
\end{center}
\end{thm}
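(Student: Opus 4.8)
The plan is to realize the group defined by the stated presentation as $B_n = B_n(D,K)$ by the standard three-step scheme. Let $\hat{B}_n$ denote the abstract group generated by symbols $H_1,\dots,H_{n-1}$ subject only to $[H_i,H_j]=1$ for $|i-j|>1$ and $\langle H_i,H_j\rangle=1$ for $|i-j|=1$. By interpreting each $H_i$ as the geometric half-twist $H(\sigma_i)$ along the path $\sigma_i$ joining the $i$-th and $(i+1)$-th points of $K$, one obtains a homomorphism $\Phi:\hat{B}_n\to B_n$ (once the relations are seen to be respected), and the theorem amounts to showing that $\Phi$ is an isomorphism. The main tool will be the forgetful (Fadell--Neuwirth / Birman) fibration together with induction on $n$.

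First I would check that the geometric half-twists satisfy the relations, so that $\Phi$ is well defined. If $|i-j|>1$ then $\sigma_i$ and $\sigma_j$ are disjoint, hence their regular neighborhoods $U_i,U_j$ can be taken disjoint; since $H(\sigma_i)$ is supported in $U_i$ and is the identity outside it, the two diffeomorphisms commute, giving $[H_i,H_j]=1$. If $|i-j|=1$, say $j=i+1$, the paths meet in a single shared point, and the relation $\langle H_i,H_{i+1}\rangle=1$ is verified by an explicit isotopy of the three marked points involved (the classical braid move), carried out inside the small disk containing $\sigma_i\cup\sigma_{i+1}$. Next I would show that $\Phi$ is surjective, i.e. that the $H_i$ generate $B_n$. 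The cleanest route is induction via the map $f:B_n\to B_{n-1}$ forgetting the last marked point; by the Birman exact sequence this sits in $1\to \pi_1(D\setminus K')\to B_n \xrightarrow{f} B_{n-1}\to 1$, where $K'=K\setminus\{\text{last point}\}$ and the kernel is free of rank $n-1$, generated by point-pushing classes. By the inductive hypothesis $B_{n-1}$ is generated by $H_1,\dots,H_{n-2}$, and each point-pushing generator of the kernel is expressible as a word in $H_1,\dots,H_{n-1}$, so the $H_i$ generate $B_n$.

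The main obstacle is the third step: injectivity of $\Phi$, i.e. that there are no relations beyond the braid relations. I would run the same forgetful fibration on the abstract side: inside $\hat{B}_n$ single out the subgroup $N$ generated by the words that $\Phi$ carries to the point-pushing generators, and show that the braid relations force an analogous extension $1\to N\to \hat{B}_n \to \hat{B}_{n-1}\to 1$ with $N$ mapping onto the free kernel. Invoking the inductive isomorphism $\hat{B}_{n-1}\cong B_{n-1}$ and the fact that $\pi_1(D\setminus K')$ is free of rank $n-1$, a five-lemma diagram chase between the two short exact sequences yields injectivity of $\Phi$. The delicate point --- the real content of Artin's theorem --- is verifying that $N$ maps \emph{isomorphically}, not merely onto, the free kernel, equivalently that the half-twists generate a faithful copy of the point-pushing free group with no collapse.

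I would handle this obstacle in one of two ways. Approach (a): use the asphericity of the configuration space $X_n/S_n$, which guarantees that the fibration-induced sequence on $\pi_1$ is exact with the stated free kernel, thereby pinning down the relations exactly. Approach (b), which is more algebraic and self-contained: define the Artin representation sending $H_i$ to the automorphism of the free group $\pi_1(D-K)=\langle x_1,\dots,x_n\rangle$ given by $x_i\mapsto x_i x_{i+1} x_i^{-1}$, $x_{i+1}\mapsto x_i$, and $x_j\mapsto x_j$ for $j\neq i,i+1$; one checks directly that these automorphisms satisfy the braid relations, and then proves the representation is injective (Artin's faithfulness theorem). Since the geometric action of $B_n$ on $\pi_1(D-K)$ is by the same formulas and is likewise faithful, both $\hat{B}_n$ and $B_n$ are identified with the same subgroup of $\mathrm{Aut}(\langle x_1,\dots,x_n\rangle)$, forcing $\Phi$ to be an isomorphism and completing the proof.
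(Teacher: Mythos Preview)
The paper does not prove this theorem at all; it is merely \emph{recalled} as Artin's classical presentation of the braid group, with no argument given. There is therefore no ``paper's own proof'' against which to compare your proposal.

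That said, your outline is a correct and standard route to Artin's theorem: verifying the relations geometrically, then using the Fadell--Neuwirth/Birman exact sequence and induction on $n$ (or alternatively the faithfulness of the Artin representation on $\mathrm{Aut}(F_n)$) to obtain both surjectivity and injectivity of $\Phi$. The one point to be careful about in approach (a) is that establishing the short exact sequence $1\to N\to \hat{B}_n\to \hat{B}_{n-1}\to 1$ on the \emph{abstract} side, with $N$ free of the correct rank, is not automatic from the relations alone and is essentially equivalent to what you are trying to prove; the cleanest way to close this is your approach (b), or to construct an explicit set-theoretic section/normal form (Artin's combing) showing that every element of $\hat{B}_n$ has a unique expression compatible with the geometric normal form. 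Either way, your sketch is sound, but bear in mind that in the context of this paper the result is simply being quoted, not established.
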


Assume that all of the points of $K$ are on the $X$-axis (when
considering $D$ in $\mathbb{R}^2$). In this situation, if $a,b \in
K$, and $z_{a,b}$ is a path that connects them, then we denote it
by $Z_{a,b} = H(z_{a,b})$. If $z_{a,b}$ is a path that goes below
the $X$-axis, then we denote it by $\underline Z_{a,b}$, or just
$Z_{a,b}$. If $z_{a,b}$ is a path that goes above the $x$-axis,
then we denote it by $\overline Z_{a,b}$. We also denote by
$\overset{(c-d)}{\underline{Z}_{a,b}}$
($\underset{(c-d)}{\bar{Z}_{a,b}}$) the braid induced from a path
connecting the points $a$ and $b$ below (resp. above) the
$X$-axis, going above (resp. below) it from the point $c$ till
point $d$.\\

\begin{defi} {The braid monodromy
w.r.t. $C,\pi,u$} \emph{Let $C$ be a curve, $C\subseteq \C^2$ . Choose $O \in \C^2, O \not\in C$ such that the projection $f: \C^2 \to\C^1$  with center $O$ will be generic when restricting it to $C$. We denote $\pi = f|_C$ and
$\deg\pi = \deg\, C$ by $m.$ Let $N=\{x\in\C^1\bigm| \#\pi^{-1}(x)< m\}.$
Take $u\notin N,$% s.t.  $\Re(x)\ll u$ \ $\forall x\in N.$
and let  $\C^1_u=f^{-1}(u).$  There is a  naturally defined homomorphism
$$\pi_1(\C^1-N,u)\xrightarrow{\vp} B_m[\C_u^1,\C_u^1\cap C]$$ which
is called {\it the braid monodromy w.r.t.} $C,\pi,u,$ where $B_m$
is the braid group. We sometimes denote $\vp$ by $\vp_u$}.\end{defi}

In fact,
denoting by $E$ a big disk in $\C^1$ s.t. $E \supset N$, we can
also take the path in $E\setminus N$ not to be a loop, but just a
non-self-intersecting path. This induces a diffeomorphism between
the models $(D,K)$ at the two ends of the considered path, where
$D$ is a big disk in $\C^1_u$, and $K = \C_u^1\cap C \subset
D$.

\begin{defi}  ${\psi_T \text{ the Lefschetz diffeomorphism induced by a path} \ T }$.
\emph{Let  $T$ be a path in $E\setminus N$ connecting $x_0$ with
$x_1$, $T:[0, 1]\ri E\setminus N$. There exists a continuous
family of diffeomorphisms $\psi_{(t)}: D\ri D,\ t\in[0,1],$ such
that $\psi_{(0)}=Id$, $\psi_{(t)}(K(x_0))=K(T(t)) $ for all
$t\in[0,1]$, and  $\psi_{(t)}(y)= y$ for all $y\in \partial D$.
For emphasis we write $\psi_{(t)}:(D,K(x_0))\ri(D,K(T(t))$. A
Lefschetz diffeomorphism induced by a path $T$ is the
diffeomorphism $$\psi_T= \psi_{(1)}: (D,K(x_0))\usr (D,K(x_1)).$$
Since $ \psi_{(t)} \left( K(x_{0})\right) = K(T(t))$ for all $t\in
[0,1]$, we have a family of canonical isomorphisms
$$\psi_{(t)}^{\nu}: B_p\left[ D, K(x_{0})\right] \usr B_p\left[
D, K({T(t)})\right], \ \quad \text{for all} \, \,
t\in[0,1].$$}\end{defi}

We recall Artin's theorem on the presentation of the Dehn twist of
the braid group as a product of braid monodromy elements of a
geometric-base (a base of $\p = \p(\C^1 - N, u)$ with certain
properties; see \cite{MoTe1} for definitions).\\
\begin{thm} Let $C$ be a curve transversal to the line in infinity, and
$\vp$ is a braid monodromy of $C , \vp:\p \rightarrow B_m$. Let
{$\delta_i$} be a geometric (free) base (called a g-base) of $\p,$
and $ \Delta^2$ is the generator of Center($B_m$). Then:
$$\Delta^2 = \prod\vp(\delta_i).$$ This product is also defined as
the \textsl{braid monodromy factorization} \emph{(BMF)} related to
a curve $C$.\end{thm}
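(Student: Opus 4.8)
The plan is to recognise the product $\prod_i\vp(\delta_i)$ as the braid monodromy of $C$ along a single large loop surrounding all of $N$, and then to evaluate that ``monodromy at infinity'' using the hypothesis that $C$ is transversal to the line at infinity. I would use throughout the defining property of a geometric base: if $E\subset\C^1$ is a large closed disk with $N\subset\Int(E)$ and $u\in\partial E$, then a g-base $\delta_1,\dots,\delta_p$ of $\p(\C^1-N,u)$ may be chosen so that the ordered product $\delta_1\cdots\delta_p$ is homotopic rel $u$ to the counterclockwise boundary loop $\partial E$. Since $\vp$ is a group homomorphism, $\prod_i\vp(\delta_i)=\vp(\partial E)$, so it suffices to show $\vp(\partial E)=\Delta^2$.

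Next I would reduce to a computation genuinely near $\infty$. Every branch point of $\pi=f|_C\colon C\to\C^1$ lies in $N\subset\Int(E)$, so over the annulus $\{|x|\ge R_0\}$ (with $R_0$ chosen so that $N$ lies in the disk of radius $R_0$) the map $\pi$ is an unbranched $m$-sheeted covering, $m=\deg C$. Hence, up to the canonical isomorphisms of the fibre braid groups induced along a radial connecting path, the Lefschetz diffeomorphism attached to the circle $\{|x|=R\}$ is the same for every $R\ge R_0$ and equals $\vp(\partial E)$: the connecting annulus contains no branch point, so no extra braiding is introduced. Transversality now enters. Because $C$ is transversal to the line at infinity, $\overline C$ meets $\ell_\infty$ in $m$ distinct points $[1:c_i:0]$ with the slopes $c_i$ pairwise distinct, and the genericity of the projection center gives $[0:1:0]\notin\overline C$; consequently, over a punctured neighbourhood of $\infty$ in $\C^1$, the $m$ branches of $C$ are single-valued graphs $y=y_i(x)$ given by convergent Laurent expansions $y_i(x)=c_i x+O(1)$ with distinct leading coefficients.

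Then I would evaluate $\vp(\partial E)$ directly. Writing $x=R e^{2\pi i t}$, $t\in[0,1]$, the fibre over $x$ is $\{\,y_i(x)=c_i R e^{2\pi i t}+O(1)\,\}_{i=1}^m$, and for $R$ large these $m$ points stay uniformly separated (distinct slopes) while each winds exactly once counterclockwise around the origin of $\C^1_u$. Thus the ambient isotopy of the large disk $D\subset\C^1_u$ realising this motion, which is the identity on $\partial D$, drags the configuration $\{y_1,\dots,y_m\}$ along a rigid $2\pi$-rotation about an interior point. The mapping class of such an isotopy is exactly the Dehn twist about a simple closed curve in $D$ enclosing all $m$ points, i.e.\ the generator $\Delta^2$ of the center of $B_m$. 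Therefore $\prod_i\vp(\delta_i)=\vp(\partial E)=\Delta^2$.

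The only genuinely delicate step is the asymptotic analysis in the second paragraph: one must justify that near $\infty$ the curve splits into $m$ disjoint graphs $y=y_i(x)$ with $y_i(x)\sim c_i x$ and the $c_i$ distinct --- this is precisely the content of ``transversal to the line at infinity'' together with the choice of a generic projection center --- and one must verify the elementary but convention-sensitive fact that a rigid $2\pi$-rotation of \emph{any} finite interior configuration of a disk represents $\Delta^2$, independently of the shape of the configuration (for instance, whether or not the points are collinear) and with the correct sign. An alternative, more hands-on route would be to deform $C$ to a generic union of $m$ lines, check $\Delta^2=\prod\vp(\delta_i)$ there by direct computation, and appeal to invariance of the braid monodromy factorisation under such a deformation; but the monodromy-at-infinity argument above is shorter and avoids invoking a deformation-invariance statement.
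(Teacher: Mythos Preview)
Your argument is the standard ``monodromy at infinity'' proof of this classical result, and it is essentially correct: the product of a g-base is homotopic to the big boundary loop, $\vp$ is a homomorphism, and transversality to $\ell_\infty$ forces the branches to behave like $y_i(x)\sim c_i x$ with distinct $c_i$, so the monodromy around a large circle is a full $2\pi$-rotation of the configuration, i.e.\ $\Delta^2$. The only caveats are those you yourself flag (orientation/sign conventions, and the identification of the rigid rotation with the central element), and these are routine.

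However, there is nothing to compare against: the paper does \emph{not} prove this theorem. It is stated in the background subsection as ``Artin's theorem'', recalled without proof and with a reference to \cite{MoTe1} for the definitions. The surrounding text only adds the remark that $\Delta^2=(x_1\cdots x_{n-1})^n$, hence $\deg(\Delta^2)=n(n-1)$. So your proposal is not a variant of the paper's proof but a self-contained justification of a result the paper takes for granted; it matches the classical proof one finds in the Moishezon--Teicher papers the authors cite.
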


Note that if $x_1,...,x_{n-1}$ are the generators of $B_n$, then
we know that $\Delta^2 = (x_1\cdot\ldots\cdot x_{n-1})^n$ and thus
deg($\Delta^2$) = $n(n-1)$.

So in order to find out what is the braid monodromy factorization
of $\Delta_p^2$, we have to find out what are $\vp
(\delta_i),\,\forall i$. We refer the reader to the definition of
a \textit{skeleton} (see \cite{MoTe2}) $\lambda_{x_j}, x_j \in N$,
which is a model of a set of paths connecting points in the fiber,
s.t. all those points coincide when approaching
$A_j=$($x_j,y_j$)$\in C$, when we approach this point from the
right. To describe this situation in greater detail, for $x_j \in
N$, let $x_j' = x_j + \alpha$. So the skeleton in $x_j$ is defined
as a system of paths connecting the points in $K(x_j') \cap
D(A_j,\varepsilon)$ when $0 < \alpha \ll \varepsilon \ll 1$,
$D(A_j,\varepsilon)$ is a disk centered
in $A_j$ with radius $\varepsilon$.\\

For a given skeleton, we denote by
$\Delta\langle\lambda_{x_j}\rangle$ the braid by rotates by 180
degrees counterclockwise a small neighborhood of the given
skeleton. Note that if $\lambda_{x_j}$ is a single path, then
$\Delta\langle\lambda_{x_j}\rangle = H(\lambda_{x_j})$.

We also refer the reader to the definition of $\delta_{x_0}$, for
$x_0 \in N$ (see \cite{MoTe2}), which describes the Lefschetz
diffeomorphism induced by a path going below $x_0$, for different
types of singular points (tangent, node, branch; for example, when
going below a node, a half-twist of the skeleton occurs and when
going below a tangent point, a full-twist occurs).

We define, for $x_0 \in N$, the following number:
$\varepsilon_{x_0} = 1,2,4$ when ($x_0, y_0$) is a branch / node /
tangent point (respectively). Explicitly, in local coordinates $(x,y)$ (where $(x_0, y_0)= (0,0)$), a branch is a singular point (w.r.t. the projection) with local equation $y^2=x$, a node -- $y^2=x^2$, and a tangent $y(y-x^2)=0$. So we have the following statement
(see \cite[Prop. 1.5]{MoTe2}):

Let $\gamma_j$ be a path below the real line from $x_j$ to $u$,
s.t. $\ell(\gamma_j)=\delta_j$. So
$$\vp_u(\delta_j) = \vp(\delta_j) =
\Delta \langle (\lambda_{x_j})\bigg(\prod\limits_{m=j-1}^{1}\delta_{x_m}\bigg)\rangle^{\varepsilon_{x_j}}.$$
When denoting $\xi_{x_j} =
(\lambda_{x_j})\bigg(\prod\limits_{m=j-1}^{1}\delta_{x_m}\bigg)$
we get
$$\vp(\delta_j)
= \Delta\langle(\xi_{x_j})\rangle^{\varepsilon_{x_j}}.$$ Note that
the last formula gives an algorithm to compute the needed
factorization. For a detailed explanation of the braid monodromy, see \cite{MoTe1}.\\

Assume that we have a curve $\bar{C}$ in $\CP^2$ and its BMF. Then
we can calculate the groups\\ $\pi_1(\CP^2 -\overline C)$ and
$\pi_1(\C^2 - C)$ (where  $C = \bar{C} \cap \C^2$). Recall that a
$g$-base is an ordered free base of $\p(D \backslash F,v)$, where
$D$ is a closed disc, $F$ is a finite set in Int($D$), $v \in
\partial D$ which satisfies several conditions; see \cite{MoTe1},
\cite{MoTe2} for the explicit definition.

Let $\{\G_i\}$ be a $g$-base of $G = \pi_1(\C_u-(\C_u \cap C),u),$ where $\C_u
= \C \times u$. We cite now the
Zariski-Van Kampen Theorem (for cuspidal curves) in order to
compute the relations between the generators in $G.$

\begin{thm} \label{thmVK} {\rm Zariski-Van Kampen (cuspidal curves version)} Let
$\overline C$ be a cuspidal curve in $\CP^2$. Let
$C=\C^2\cap\overline C.$ Let $\vp$ be a braid monodromy
factorization w.r.t. $C$ and $u.$ Let $\vp=\prod\limits_{j=1}^p
V_j^{\nu_j},$ where $V_j$ is a half-twist and $\nu_j=1,2,3.$

For every $j=1\dots p$, let $A_j,B_j\in\pi_1(\C_u-C,u)$ be such
that $A_j,B_j$ can be extended to a $g$-base of $\pi_1(\C_u-C,u)$
and $(A_j)V_j=B_j.$ Let $\{\G_i\}$ be a $g$-base of
$\pi_1(\C_u-C,u)$ corresponding to the $\{A_i, B_i \}$, where
$A_i, B_i$ are expressed in terms of $\G_i$. Then
$\pi_1(\C^2-C,u)$ is generated by the images of $\{\G_i\}$ in
$\pi_1(\C^2-C,u)$ and the only relations are those implied from
$\{V_j^{\nu_j}\},$ as follows:
$$\begin{cases} A_j\cdot B_j^{-1}&\quad\text{if}\quad \nu_j=1\\
[A_j,B_j]=1&\quad\text{if}\quad \nu_j=2\\
\langle A_j,B_j\rangle=1&\quad\text{if}\quad \nu_j=3.\end{cases}$$
$\pi_1(\CP^2-\overline C,*)$ is generated by $\{\G_i\}$ with the
above relations and one more relation $\prod\limits_i \G_i=1.$
\end{thm}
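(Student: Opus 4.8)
The plan is to run the classical Zariski--van Kampen topological argument, organized around the generic projection $f:\C^2\to\C^1$ with centre $O$ used to define $\vp$. Write $N=\{x_1,\dots,x_p\}\subset\C^1$ for the discriminant of $\pi=f|_C$ (so the $x_j$ are the first coordinates of the singular points and the fibre-tangency points of $C$), fix the generic fibre $\C_u=f\m(u)$ and set $K=\C_u\cap C$, a set of $m$ points; then $\p(\C_u-C,u)$ is free of rank $m$ with free basis the given $g$-base $\{\G_i\}_{i=1}^m$. The first reduction is at infinity: since $\overline C$ is transversal to the line at infinity, $\C^2-C$ deformation retracts onto $f\m(E)-C$ for a large closed disc $E\subset\C^1$ with $N\subset\Int(E)$ (outside $E$ the family of fibres is trivial, so everything can be swept into $\C_u$). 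Hence it suffices to compute $\p(f\m(E)-C,u)$.

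Next I would exploit the fibration structure. Over $E-N$ the map $f:f\m(E-N)-C\to E-N$ is a locally trivial bundle whose fibre $F$ is a disc with $m$ punctures, so $\p(F)\cong\langle\G_1,\dots,\G_m\rangle$ is free. Since $\pi_2(E-N)=0$, the homotopy exact sequence gives a short exact sequence $1\to\p(F)\to\p(f\m(E-N)-C)\to\p(E-N,u)\to1$; picking the $g$-base $\{\dl_j\}_{j=1}^p$ of $\p(E-N,u)$, this extension is classified precisely by the braid monodromy $\vp:\p(E-N,u)\to B_m[\C_u,K]$ followed by the Artin action of $B_m$ on $\p(F)$. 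Concretely $\p(f\m(E-N)-C)$ is presented by the generators $\G_i,\dl_j$ with relations $\dl_j\G_i\dl_j\m=(\G_i)\vp(\dl_j)$, and along the $g$-base one has $\prod_j\vp(\dl_j)=\Dl^2$ with each $\vp(\dl_j)=V_j^{\n_j}$.

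The third step fills the fibres over $N$ back in. Cover $f\m(E)-C$ by $f\m(E-N)-C$ together with small bidisc neighbourhoods of the points $(x_j,y_j)$ of $C$ lying over the $x_j$, and apply van Kampen's theorem. Adding the fibre over $x_j$ kills $\dl_j$ and, through the local analytic model of $C$ at $(x_j,y_j)$, imposes exactly that $\vp(\dl_j)$ act trivially on the fibre, i.e. $(\G_i)V_j^{\n_j}=\G_i$ for all $i$. Since $V_j$ is a half-twist along a path joining the two points that represent $A_j$ and $B_j=(A_j)V_j$, it fixes every $\G_i$ off that path, so this system collapses to one relation: at a fibre-tangency point ($C\colon y^2=x$, complement $\simeq\C^*\times\C$) it reads $A_jB_j\m=1$; at a node ($C\colon xy=0$, complement $\simeq(\C^*)^2$) it reads $[A_j,B_j]=1$; at a cusp ($C\colon y^2=x^3$, complement with $\p\cong\langle a,b\mid aba=bab\rangle$) it reads $\langle A_j,B_j\rangle=1$. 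This is precisely where the trichotomy $\n_j=1,2,3$ is forced by the three local normal forms.

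Assembling everything gives $\p(\C^2-C,u)=\langle\G_1,\dots,\G_m\mid(\G_i)V_j^{\n_j}=\G_i\ \forall i,j\rangle$, i.e. the asserted presentation once each $j$-block is written in reduced form. For the projective statement one more van Kampen comparison of $\CP^2-\overline C$ with $\C^2-C$ across a neighbourhood of the line at infinity is needed: in the fibre $\CP^1_u-(\CP^1_u\cap\overline C)$ the product $\prod_i\G_i$ of all meridians bounds a small disc around the point of $\CP^1_u$ at infinity, and this is the only new relation, giving $\prod_i\G_i=1$. I expect the main obstacle to be step three: one must pin down the gluing data, checking that transporting $A_j,B_j$ back to $\C_u$ via the Lefschetz diffeomorphisms $\psi_{\g_j}$ expresses them correctly in the chosen $g$-base and that the local monodromy around $(x_j,y_j)$ is genuinely $V_j^{\n_j}$, so that the three relations emerge with exactly the stated conjugacy data and not merely some conjugate of it; the reduction at infinity in step one is a second spot where one must argue carefully that no generator or relation is lost.
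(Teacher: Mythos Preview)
The paper does not prove this statement at all: Theorem~\ref{thmVK} is explicitly introduced with ``We cite now the Zariski--Van Kampen Theorem (for cuspidal curves)\ldots'' and is used as a black box throughout (e.g.\ in the proof of Theorem~\ref{thmMainCondition} and in Propositions~\ref{prs1}--\ref{prsCycleRel}). So there is no proof in the paper to compare against; your proposal supplies an argument where the paper simply quotes a classical result.

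That said, your outline is the standard route to this theorem and is essentially correct. The three steps --- reduction to $f^{-1}(E)-C$ via transversality at infinity, the short exact sequence of the punctured-disc fibration over $E\setminus N$ giving the semidirect-product presentation governed by $\varphi$, and the local van Kampen computation at each critical value to kill the base loops $\delta_j$ and impose the relation dictated by the local model --- are exactly the ingredients one finds in the Moishezon--Teicher treatment the paper cites (\cite{MoTe1}, \cite{MoTe2}). Your identification of the three local models $y^2=x$, $y^2=x^2$, $y(y-x^2)=0$ with $\nu_j=1,2,3$ matches the paper's own conventions (stated just before Theorem~\ref{thmVK}), and the projective relation $\prod_i\Gamma_i=1$ arises, as you say, from the meridian at infinity in $\mathbb{CP}^1_u$. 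The caveats you flag in your final paragraph --- tracking the Lefschetz diffeomorphisms so that $A_j,B_j$ are expressed in the fixed $g$-base $\{\Gamma_i\}$ rather than merely up to conjugacy, and verifying nothing is lost in the reduction at infinity --- are the genuine technical points, and a full write-up would have to discharge them carefully; but as a proof plan there is no gap.
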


The following figure illustrates how to find $A_i, B_i$ from the
half-twist $V_i = H(\sigma)$:

\begin{center}
\epsfig{file=./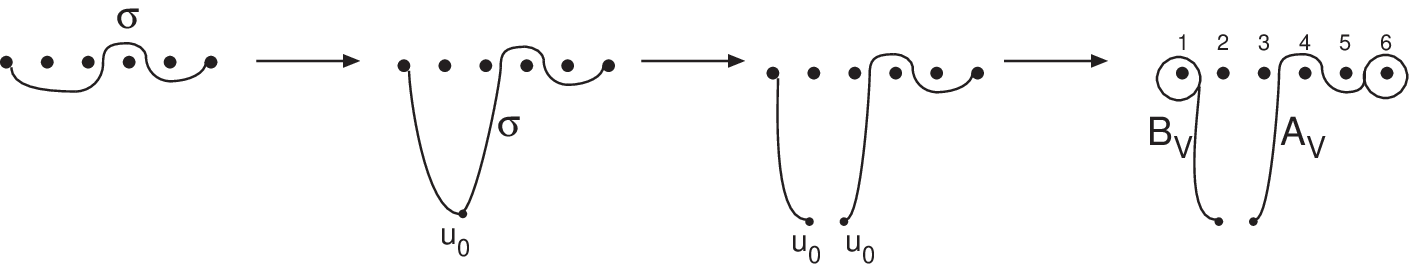}\\
Figure 14
\end{center}

So $$A_V = \G_4^{-1}\G_6\G_4,\,B_V = \G_1.$$

\subsubsection[Example of a BMF]{Example of a BMF}We give here an example of computing a simple Braid Monodromy
Factorization, for the following configuration:

\begin{center}
\epsfig{file=./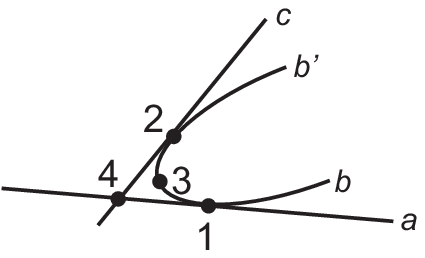}\\ Figure 15
\end{center}

We will need this factorization in Subsection \ref{secP1_C1}, where it will
be the factorization of the first regeneration a certain singular point.

\begin{prs} \label{prs2.1} The local braid monodromy factorization of the above
configuration is
$$\varphi = Z^4_{ab}Z^4_{b'c}\widetilde{Z}_{bb'}\widetilde{Z}^2_{ac}$$
where the braids $\widetilde{Z}_{bb'}, \widetilde{Z}_{ac}$
correspond to the following paths:

\begin{center}
\epsfig{file=./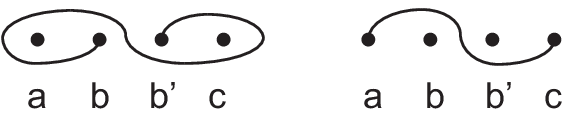}\\ \emph{Figure 16}
\end{center}

\end{prs}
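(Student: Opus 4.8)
The plan is to compute the braid monodromy factorization directly from the algorithm recalled in Subsection \ref{subsecBMF}, i.e. by locating the singular points of the projection to the $x$-axis, computing $\varepsilon_{x_j}$ and the skeleton $\xi_{x_j}$ at each, and assembling the product $\prod_j \Delta\langle \xi_{x_j}\rangle^{\varepsilon_{x_j}}$. First I would set up coordinates so that the configuration is a union of two conics (the regenerations of two of the lines) together with one line, arranged as in Figure 15, with the four labelled points $a,b,b',c$ on the reference fiber $\C^1_u$. Reading off the picture, there are four singular values of the projection: two tangency-type points (each coming from a conic being tangent to a vertical fiber) contributing the factors $Z^4_{ab}$ and $Z^4_{b'c}$ with $\varepsilon = 4$ — here the exponent $4$ is exactly the full twist produced by going below a tangent point — one branch point contributing $\widetilde{Z}_{bb'}$ with $\varepsilon = 1$, and one node contributing $\widetilde{Z}^2_{ac}$ with $\varepsilon = 2$.

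The key steps, in order, are: (i) identify precisely which local analytic type ($y^2 = x$, $y^2 = x^2$, or $y(y-x^2)=0$) occurs at each of the four critical values, and hence fix each $\varepsilon_{x_j} \in \{1,2,4\}$; (ii) order the critical values left to right along the real axis as demanded by the algorithm, so that the skeleton at $x_j$ is transported by $\prod_{m=j-1}^{1}\delta_{x_m}$ past all earlier singular points; (iii) compute each transported skeleton $\xi_{x_j}$ — this is where the paths in Figure 16 come from, since as one drags the skeleton of $bb'$ (resp. $ac$) past the earlier tangency/node points it picks up the half-twists prescribed by $\delta_{x_m}$, producing the non-straight paths $\widetilde{Z}_{bb'}$ and $\widetilde{Z}_{ac}$ rather than the naive $\underline{Z}_{bb'}, \underline{Z}_{ac}$; (iv) apply $\Delta\langle \xi_{x_j}\rangle^{\varepsilon_{x_j}}$ in each case and collect the four factors in the order dictated by the $g$-base, obtaining $\varphi = Z^4_{ab}\, Z^4_{b'c}\, \widetilde{Z}_{bb'}\, \widetilde{Z}^2_{ac}$.

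The main obstacle is step (iii): correctly tracking how the skeletons of the branch point and the node are deformed by the Lefschetz diffeomorphisms $\delta_{x_m}$ associated with the two preceding tangency points (and, for $\widetilde{Z}_{ac}$, also the node). A careless computation would give the wrong paths and hence a factorization that is not even a legitimate BMF (its total degree must be $n(n-1)$ for $n = 4$, i.e. $12$: indeed $4+4+1+2+\ldots$ must reconcile after accounting for the degrees of $Z^4, Z^4, \widetilde Z, \widetilde Z^2$ — a useful consistency check). I would therefore carry out the skeleton transport one singular point at a time, drawing the intermediate configurations explicitly as in Figure 16, and verify at the end both the degree count and the fact that conjugating the factorization back to a standard $g$-base reproduces the expected local picture. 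The remaining steps (i), (ii), (iv) are routine applications of the formula $\vp(\delta_j) = \Delta\langle \xi_{x_j}\rangle^{\varepsilon_{x_j}}$ from \cite[Prop. 1.5]{MoTe2}.
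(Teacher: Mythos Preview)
Your overall method---identify the singular points, assign $\varepsilon_{x_j}\in\{1,2,4\}$, transport each skeleton by $\prod_{m=j-1}^{1}\delta_{x_m}$, and read off $\Delta\langle\xi_{x_j}\rangle^{\varepsilon_{x_j}}$---is exactly what the paper does. But you have misread the configuration of Figure 15, and this matters for step (i). The curve is \emph{one} conic (a parabola, with strands labelled $b,b'$) together with \emph{two} lines $L_a, L_c$, not two conics and one line. Consequently the two $\varepsilon=4$ factors do not come from conics tangent to vertical fibers (that local model is $y^2=x$, a branch point with $\varepsilon=1$); they come from the parabola being tangent to each of the lines $L_a$ and $L_c$ (local model $y(y-x^2)=0$). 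The single branch point $p_3$ is the vertex of the parabola, and the node $p_4$ is the intersection $L_a\cap L_c$ of the two lines.

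With the configuration corrected, the paper's proof is precisely your steps (ii)--(iv): order the four critical values, record $(\lambda_j,\varepsilon_j,\delta_j)$ in a table, and transport. In particular $\xi_{x_3}$ is the skeleton $\langle b,b'\rangle$ pushed through the two full twists $\Delta^2\langle a,b\rangle$ and $\Delta^2\langle b',c\rangle$, giving the path $\widetilde Z_{bb'}$; and $\xi_{x_4}$ is $\langle a,c\rangle$ pushed through $\Delta^{1/2}_{IR}\langle b\rangle$ (from the branch point $p_3$) and then the same two full twists, giving $\widetilde Z_{ac}$. Your parenthetical ``for $\widetilde Z_{ac}$, also the node'' is off: the node \emph{is} $p_4$, so its own skeleton is transported past $p_1,p_2,p_3$ only. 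Finally, your degree check is not quite a check here: this is a \emph{local} factorization (the first regeneration of a $3$-point), not a global $\Delta^2$ in $B_4$, so $4+4+1+2$ need not equal $12$.
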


\begin{proof} Let $\{p_j\}^4_{j=1}$ be the singular
points of the above configuration with
respect to $\pi_1$ (the projection to the $X$-axis) as follows:\\
$p_1, p_2$ - the tangent points of the parabola and the lines $L_a, L_c$ (denoted by $a$ and $c$ in Fig. 15).\\
$p_3$ - the branch point of the parabola.\\
$p_4$ - the intersection point of $L_a, L_c$.

Let $E$ (resp. $D$) be a closed disk on the $X$-axis (resp.
$Y$-axis).  Let $N = \{x(p_j) = x_j | 1 \leq j \leq 4\},$ s.t. $N
\subset E - \partial E$.  Let $M$ be a real point on the $x$-axis,
s.t. $x_j \ll M, \forall x_j \in N, 1 \leq j \leq 4$.  There is a
$g$-base $\ell(\g_j)^4_{j=1}$ of $\pi_1(E - N,u)$, s.t. each path
$\g_j$ is below the real line and the values of $\vp_M$ with
respect to this base and $E \times D$ are the ones given in the
proposition.  We look for $\vp_M(\ell(\g_j))$ for $j = 1, \cdots ,
4$.  Choose a $g$-base $\ell(\g_j)^4_{j=1}$ as above and put all
the data in the following table:

\begin{center}
\begin{tabular}{l|c|c|c} $j$ & $\lm_j$ & $\varepsilon_j$ & $\dl_{
j}$ \\ \hline
1 & $\langle a,b\rangle$ & 4 & $\Dl^2\langle a,b\rangle$\\
2 & $\langle b',c\rangle$ & 4 & $\Dl^2\langle b',c\rangle$\\
3 & $\langle b,b'\rangle$ & 1 & $\Dl^{1/2}_{IR}\langle b\rangle$\\
4 & $\langle a,c\rangle$ & 2 & $-$\\

\end{tabular}
\end{center}

So, we get the following:\\
$\xi_{x_1} = z_{a,b}\,,\,\varphi_M(\ell(\gamma_1)) = Z^4_{ab}$\\[1ex]
$\xi_{x_2} = z_{b',c}\,,\,\varphi_M(\ell(\gamma_2)) = Z^4_{b'c}$\\[1ex]
$\xi_{x_3} = \epsfig{file=./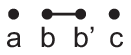}
\xrightarrow[\Delta^2<a,b>]{\Delta^2<b',c>}\,
\epsfig{file=./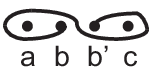},\,\varphi_M(\ell(\gamma_3)) =
\widetilde{Z}_{bb'}$\\[1ex]
$\xi_{x_4} = \epsfig{file=./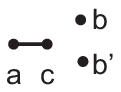}
\xrightarrow{\Dl^{1/2}_{IR}<b>} \epsfig{file=./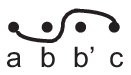}
\xrightarrow[\Delta^2<a,b>]{\Delta^2<b',c>}\,\epsfig{file=./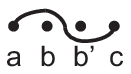}
,\,\varphi_M(\ell(\gamma_4)) = \widetilde{Z}_{ac} $\end{proof}

\subsubsection[Regeneration rules]{Regeneration rules}\label{subsecRegRule} We finish this subsection with the regeneration
rules. Given a degeneration $\rho : S \rightarrow \Delta$,  the
regeneration rules explain how the braid monodromy factorization
 of the branch curve of $S_0$ (under
generic projection) changes when passing to the braid monodromy
factorization  of the branch curve of $S_t$,\,$t \neq 0$. The
rules are (see \cite{MoTe4}, pp. 336-337):
\begin{enumerate}
\item \textbf{First regeneration rule}: The regeneration of a
branch point
of any conic:\\
A factor of the braid monodromy of the form $Z_{i,j}$ is replaced
in the regeneration by $Z_{i',j}\cdot
\overset{(j)}{\underline{Z}}_{i,j'}$
\item \textbf{Second regeneration rule}: The regeneration of a node:\\
A factor of the form $Z^2_{ij}$ is replaced by a factorized
expression $Z^2_{ii',j} := Z^2_{i'j}\cdot Z^2_{ij}$ ,\\
$Z^2_{i,jj'} := Z^2_{ij'}\cdot Z^2_{ij}$ or by $Z^2_{ii',jj'} :=
Z^2_{i'j'}\cdot Z^2_{ij'}Z^2_{i'j}\cdot Z^2_{ij}$. \item
\textbf{Third regeneration rule}: The regeneration of a tangent
point:\\
A factor of the form $Z^4_{ij}$ in the braid monodromy factorized
expression is replaced by\\ $Z^3_{i,jj'} :=
(Z^3_{ij})^{Z_{jj'}}\cdot (Z^3_{ij}) \cdot
(Z^3_{ij})^{Z^{-1}_{jj'}}$.
\end{enumerate}

\subsection[The fundamental group related to $\CP^1 \times C_1$]{The fundamental group related to $\CP^1 \times C_1$}
\label{secP1_C1}

We start by analyzing the degeneration of the surface $\CP^1
\times C_1$, where $C_1$ is a smooth curve of genus 1. Although
this surface was already investigated in \cite{AFT}, we present
here a different degeneration, which can be generalized to
surfaces of the form $\CP^1 \times C_g$ ($C_g$ is a smooth
genus--$g$ curve). This generalization will be discussed in the
next subsection but we give here a rough description of how this degeneration is done. See also Construction \ref{cons2}.

\begin{construction} \label{cons1} \emph{We review the degeneration described in \cite{CCFR}. Let $C$ be a smooth, rational normal curve of degree $n$ in $\pp^n$.
Since $C$ degenerates to a union of $n$ lines $l_i$ (s.t. $l_i \cap l_{i+1} = pt.$ for $1 \leq i  \leq n-1$,$l_i \cap l_j = \emptyset$ for $|i-j|>1$), the smooth rational
normal scroll $S = C \times \pp^1 \subset \pp^{2n+1}$ degenerates to surface $S` = \bigcup_{i=1}^n S_i$ such that each $S_i$ is a quadric (i.e. isomorphic to $\pp^1 \times \pp^1$). Each quadric $S_i$ meets $S \ S_i$ either along one or two lines of the same
ruling. Thus each quadric $S_i$ degenerates to the
union of two planes meeting along a line $l_i$, leaving the other line(s) fixed. Therefore, in $\pp^{2n+1}$,
the scroll $S$ degenerates  to a planar  surface $S''$ of degree $2n$. Assume $n>2$
Choose now two disjoint lines $\ell_1$, $\ell_4$ in the planes $S_1$ and $S_4$ such that $S_1 \cap S_2 \cap S_3 \not\in \ell_1$, $S_3 \cap S_4 \cap S_5 \not\in \ell_4$. As $\ell_1,\ell_4$ are skew, they span a $\pp^3$ which we denote as $\Pi$, such that $\Pi \cap S'' = \ell_1 \cup \ell_4$. Thus there exists a smooth quadric $Q$ in $\Pi$ such that
$\ell_1,\ell_4$ are lines of the same ruling on $Q$ and $Q \cap S'' = \ell_1 \cup \ell_4$. There, in $\Pi$, $Q$ degenerates to two planes $P_1,P_4$ s.t. $\ell_i \in P_i$. In \cite[Construction 4.2]{CCFR} one proves that  the planar surface $S'' \cap P_1 \cap P_4$ is indeed a degeneration $\CP^1
\times C_1$. See Figure 17 for the final degeneration
when $\pp^1 \times \pp^1$ is embedded w.r.t. the linear system $(1,3)$ (i.e. $n=3$ in the above notation).}
\end{construction}

%
%The following degeneration of $\CP^1 \times C_1$ was first introduced in
%\cite{CCFR}. It consists of attaching a quadric to the degenerated
%surface $\CP^1 \times \CP^1$ (embedded w.r.t the linear system
%$(1,n), n>2$) and then degenerating the quadric into a union of
%two planes. See \cite{CCFR} for the explicit details. When $\pp^1 \times \pp^1$ is embedded w.r.t. the linear system $(1,3)$, then the final
%degeneration looks like this:\\

\begin{center}
\epsfig{file=./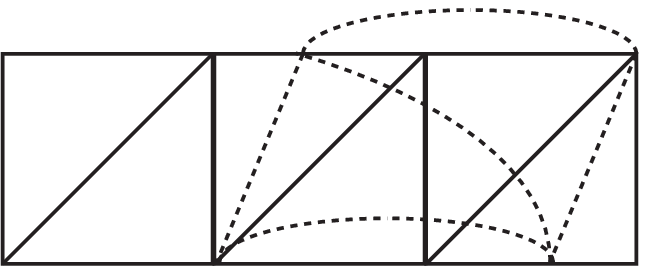} \\\small{Figure 17 : the degeneration
of $\CP^1 \times C_1$}
\end{center}
The dashed lines represent the attached degenerated quadric.
Some of the planes are intersecting other planes in lines. We
numerate (according to \cite{MoTe1}) the singular points of this arrangement of lines by $V_i, 1\leq
i\leq 8$ and
 the lines of intersection by $L_i, 1\leq i\leq 8 $, as follows:\\

\begin{center}
\epsfig{file=./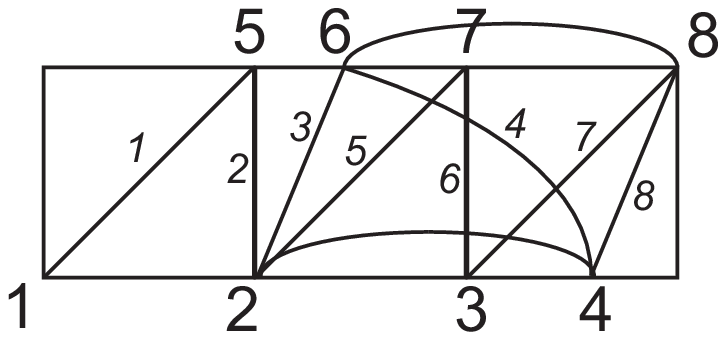}\\ \small{Figure 18 : numeration of
singular points and lines of the degenerated ramification curve of
$\CP^1 \times C_1$}
\end{center}

Note that $\cup L_i$ is the degenerated ramification curve $R_0$ with respect to a generic projection. Thus,
projecting the degenerated surface to $\CP^2$,
we denote by $B_0$ the (degenerated) branch curve and the images of $V_i$ by $v_i$.
We numerate the lines composing $B_0$ as before. Note that we have new
singular points, beside the points $v_i$, called parasitic intersection (see subsection \ref{subsecParasitic}). These points are
created from lines that did not intersect in $\CP^9$ but do
intersect in $\CP^2$. The braid monodormy factorization of the
degenerated branch curve is known to be (see \cite{MoTe1})
$\Pi_{i=8}^1 \widetilde{C_i}\Delta^2_i$, where $\widetilde{C_i}$
denotes the local braid monodromy factorization around
the parasitic intersection points and $\Delta^2_i$ the local
braid monodromy factorization around the point $v_i$. One can find the
$\widetilde{C_i}$'s according to \cite[Theorem IX]{MoTe1}.

\begin{remark} \label{remDeltaSq}
\emph{Since the regeneration of $\Delta^2_i$ for the different
points was already done, we give here references to the final
results.
 The points $v_i,\,\, i=3,..,8$ are 3--points.
(i.e., they are the images of the points $v_i$ which are locally the intersection of three planes. see e.g.,
\cite{MoTe4}) The factors that they contribute to the factorization (i.e. their local BMFs) are either
$Z^{(3)}_{a',bb'}\T \widetilde{Z}_{aa'}$ or $ Z^{(3)}_{aa',b}\T
\widetilde{Z}_{bb'}$ (where $v_i = L_a \cap L_b$).
 The point $v_1$ is a 2--point and contributes to the
factorization the factor $Z_{1,1'}$ (see Notation \ref{defM} and Remark \ref{remTwoTypes}(I)).
 For the point $v_2$, see a
more explicit explanation in the next remark.}
\end{remark}

\begin{remark} \label{remReg}
\emph{In a small neighborhood of $v_2$, the first line that
regenerates is $L_3$, which turns into a conic (see \cite{MoTe4}).
The braid monodromy factorization of this first regeneration is presented in
Proposition \ref{prs2.1}. In the following
regenerations we use the regeneration rules (see Subsection~\ref{subsecRegRule}): the tangent points (i.e., a braid of the form
$Z^4_{...}$) are regenerated into three cusps (three braids of the
form $Z^3_{...}$) and a node (a braid of the form $Z^2_{...}$)
into four nodes. Explicitly, the factorization
$Z^4_{23}Z^4_{3'5}\widetilde{Z}_{33'}\widetilde{Z}^2_{25}$ is
replaced by the factorization $Z^{(3)}_{22',3} Z^{(3)}_{3',55'}
\widetilde{Z}_{33'} \overset{(3)}{\lz^2}_{22',55'}$.}
\end{remark}

\begin{notation} \emph{Denote $\varphi(a,b,c) = Z^{(3)}_{aa',b} Z^{(3)}_{b',cc'}
\widetilde{Z}_{bb'} \overset{(b)}{\lz^2}_{aa',cc'}$ where
$\widetilde{Z}_{bb'}$ is as Figure 16, when the points $a$ and $c$
are doubled.}
\end{notation}

\begin{notation} \emph{$B_1 = $ the branch curve of $\pp^1 \times C_1$ embedded in $\pp^9$ w.r.t. a generic projection.}
\end{notation}

From Remarks \ref{remReg} and \ref{remDeltaSq}, we can induce the BMF of $B_1$:
\begin{thm} \label{thmBMF1}
The braid monodromy factorization of the branch curve $B_1$ of a generic
projection of $\CP^1 \times C_1$ embedded in $\CP^9$ is:
$$\Delta^2 = \prod_{i=8}^1 {C_i}\cdot H_i,$$ where\\

\begin{center}
$C_i = id,\, i=1,5,..,8, \quad C_2 = D_3\T D_5, \quad C_3 = D_6 \T
D_7, \quad C_4 = D_4\T D_8 $
\end{center}
 where
\begin{center}

$D_3 = \uz^2_{11',33'}, \quad D_4 = \lz^2_{11',44'}\T
Z^2_{22',44'}, \quad D_5 = \uz^2_{11',55'}\T Z^2_{44',55'} , \quad
 D_6 = \Pi_{i=1}^4 \underset{(5-5')}{\uz^2_{ii',66'}},$\\
 $D_7 = \Pi_{i=1}^5 \uz^2_{ii',77'}, \quad D_8 = \Pi_{\stackrel{i=1,2,}{\scriptscriptstyle{3,5,6}}}
 \uz^2_{ii',88'}\quad $
\end{center}
 and
\begin{center}
 $H_1 = Z_{1,1'},\quad H_i = Z^{(3)}_{a',bb'}\T \widetilde{Z}_{aa'}$
 for $i=4,5,7,8,\, H_i = Z^{(3)}_{aa',b}\T \widetilde{Z}_{bb'}$
 for $i=3,6$
\end{center} (when $v_i = L_a \cap L_b,\, a<b$), where
 $\widetilde{Z}_{\T,\,\T} $ is the braid induced from the following
 motion:\\\\
 $\widetilde{Z}_{aa'}:$
\begin{center}
\epsfig{file=./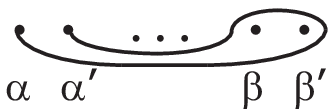}
\end{center}

\noindent $\widetilde{Z}_{bb'}:$
\begin{center}
\epsfig{file=./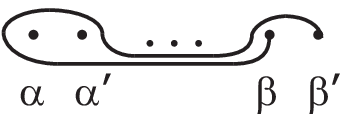}
\end{center}

$$Z^{(3)}_{a',bb'} = \prod_{q=-1,0,1}(Z^3_{a',b})_{Z^q_{b,b'}},\quad
 Z^{(3)}_{aa',b} = \prod_{q=-1,0,1}(Z^3_{a',b})_{Z^q_{a,a'}}$$
 and $H_2 = \varphi(2,3,5)$ where
 $ \widetilde{Z}_{33'}$ (a factor in the factorization $H_2$) is the braid induced from the following motion:

 \begin{center}
\epsfig{file=./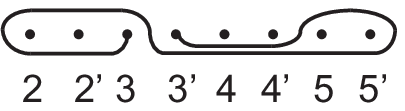}
\end{center}

\end{thm}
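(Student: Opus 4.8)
The plan is to assemble the braid monodromy factorization (BMF) of $B_1$ from its two ingredients — the parasitic part $\prod C_i$ and the ``local'' part $\prod H_i$ coming from the singular points $v_i$ of the degenerated ramification curve — and then track how each factor regenerates under the three regeneration rules from Subsection~\ref{subsecRegRule}. I would begin from the known BMF of the \emph{degenerated} branch curve $B_0$, which (as recalled after Figure~18) has the form $\prod_{i=8}^{1}\widetilde{C_i}\,\Delta^2_i$, where $\Delta^2_i$ is the local monodromy at $v_i$ and $\widetilde{C_i}$ collects the braids around the parasitic intersection points lying ``between'' consecutive $v_i$'s.

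First I would compute the $\widetilde{C_i}$'s. Since the configuration $\cup L_i$ is a partial sub-arrangement of a ``dual-to-generic'' line arrangement, Theorem~IX of \cite{MoTe1} (also invoked in Subsection~\ref{subsecParasitic}) gives that each $\widetilde{C_i}$ is a product of commuting $Z^2$-type factors indexed by the pairs of lines that meet only parasitically; regenerating each doubled line then replaces a factor $Z^2_{ij}$ by $Z^2_{ii',jj'}$ (second regeneration rule), possibly conjugated to account for lines that have already been doubled and for the paths passing above the relevant points — this is exactly how the superscripts $(5-5')$ in $D_6$ and the bars $\uz$ vs.\ $\lz$ arise. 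Reading off which line-pairs are parasitic from Figure~18 and keeping careful track of the regeneration order (lines are doubled one at a time in the order of their indices) yields $C_1 = C_5 = \dots = C_8 = \mathrm{id}$ and the explicit formulas for $C_2,C_3,C_4$ in terms of $D_3,\dots,D_8$.

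Next I would regenerate the local factors $\Delta^2_i \rightsquigarrow H_i$. For $i = 3,\dots,8$ the points $v_i$ are $3$-points, whose regenerated local BMF is standard (cited in Remark~\ref{remDeltaSq} from \cite{MoTe4}): it is $Z^{(3)}_{a',bb'}\cdot\widetilde{Z}_{aa'}$ or $Z^{(3)}_{aa',b}\cdot\widetilde{Z}_{bb'}$ according to which of the two lines through $v_i$ regenerates first, and the motion defining $\widetilde{Z}_{aa'}$, $\widetilde{Z}_{bb'}$ is the one drawn in the figures. The point $v_1$ is a $2$-point, contributing $H_1 = Z_{1,1'}$ by Remark~\ref{remTwoTypes}(I). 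The only genuinely new computation is $H_2$: here $v_2$ is not an ordinary $3$-point but has the local line-order described in Remark~\ref{remReg}, so its \emph{degenerated} local BMF is the one computed in Proposition~\ref{prs2.1}, namely $Z^4_{23}Z^4_{3'5}\widetilde{Z}_{33'}\widetilde{Z}^2_{25}$. Applying the first regeneration rule to the branch point, the third rule to each tangent braid $Z^4$, and the second rule to the node braid $\widetilde{Z}^2_{25}$ turns this into $\varphi(2,3,5) = Z^{(3)}_{22',3}Z^{(3)}_{3',55'}\widetilde{Z}_{33'}\overset{(3)}{\lz^2}_{22',55'}$, with the $\widetilde{Z}_{33'}$ motion as in the final figure. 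Assembling $\prod_{i=8}^{1} C_i\cdot H_i$ in the prescribed order gives the asserted factorization of $\Delta^2$; a degree check ($\deg\Delta^2 = 2\ell(2\ell-1)$ with $2\ell = \deg B_1$) confirms completeness.

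\textbf{The main obstacle} is the bookkeeping around $v_2$: one must verify that Proposition~\ref{prs2.1} genuinely describes the first regeneration step near $v_2$ (that the line $L_3$ is the one that regenerates first, and that the skeleton data there matches the configuration of Figure~15), and then carefully propagate the conjugations introduced by the subsequent doublings of the neighbouring lines — these conjugations are what distinguish $\overset{(3)}{\lz^2}_{22',55'}$ from a naive $Z^2_{22',55'}$, and getting the ``above/below'' decorations and the path for $\widetilde{Z}_{33'}$ right is the delicate part. The parasitic factors $C_i$ are routine once the regeneration order is fixed, and the $3$-point regenerations are simply quoted; so essentially all the work, and all the risk of sign/conjugation errors, is concentrated in the factor $H_2$.
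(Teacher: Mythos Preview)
Your proposal is correct and follows essentially the same approach as the paper: the paper does not give a separate proof of Theorem~\ref{thmBMF1} but simply states that it is ``induced from Remarks~\ref{remDeltaSq} and~\ref{remReg}'', which together with Proposition~\ref{prs2.1} and \cite[Theorem~IX]{MoTe1} for the parasitic factors is exactly the route you describe. Your identification of the $v_2$ computation as the only nontrivial step matches the paper's emphasis precisely.
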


We recall the definition of an equivalence relation on the braid
monodromy factorization. Let $H$ be a group.
%{\textbf{Definition}:\, \underbar{\emph{Hurwitz moves}}}:
\begin{defi} [Hurwitz moves]
\emph{Let $\vec t= (t_1,\ldots ,t_m)\in H^m$\,. We say that $\vec
s =(s_1,\ldots ,s_m)\in H^m$ is obtained from $\vec t$ by the
Hurwitz move $R_k$ (or $\vec t$ is obtained from $\vec s$ by the
Hurwitz move $R^{-1}_k$) if
$$
s_i = t_i \quad\text{for}\  i\ne k\,,\, k+1\,,\\
s_k = t_kt_{k+1}t^{-1}_k\,,\\
s_{k+1} =t_k\,.
$$}
\end{defi}

\begin{defi}[Hurwitz move on a factorization] \emph{Let $H$ be a group $t\in H.$  Let
$t=t_1\cdot\ldots\cdot t_m= s_1\cdot\ldots\cdot s_m$ be two
factorized expressions of $t.$ We say that $s_1\cdot\ldots\cdot
s_m$ is obtained from $t_1\cdot\ldots\cdot t_m$ by a Hurwitz move
$R_k$ if $(s_1,\ldots ,s_m)$ is obtained from $(t_1,\ldots ,t_m)$
by a Hurwitz move $R_k$\,.}
\end{defi}

\begin{defi}
\emph{(1) Two factorizations are Hurwitz equivalent if they are
obtained
from each other by a finite sequence of Hurwitz moves.\\
(2) Let $g = g_1\T ... \T g_n$ be a factorized expression in a
group $H$ ($g_i \in H$), and denote by $()_h$ the conjugation by
$h \in H$. We say that $g$ is \emph{invariant} under $h$ if $g_h
\doteq (g_1)_h \T ... \T (g_n)_h$ is Hurwitz equivalent to $g$.}
\end{defi}

Let us examine the invariance relations on the braid monodromy
factorization from Theorem~\ref{thmBMF1}. From \cite{MoTe4} we
know that the expressions $C_i, 1\leq i \leq 8$ and $H_j, 1\leq j
\leq 8, j\neq 2$ are invariant under $Z^q_{kk'}, q \in \Z,
k=1,4,6,7,8$. Recall also that the expressions of the form
$Z^2_{ii',jj'}$ are invariant under $Z^p_{ii'}Z^q_{jj'}$ and
$Z^{(3)}_{i,jj'}$ is invariant under $Z^k_{jj'}$\,($k,p,q \in
\Z$). Note that if $\sigma \cap [j,j'] = \emptyset$ (where
$\sigma$ is a path in a disc containing the points $j,j'$ and
$[j,j']$ is a line connecting $j$ and $j'$) then $H(\sigma)$ is
invariant under $Z^k_{jj'}$\,($k \in \Z$).

\begin{remark}
\emph{Using these rules, we see that $H_2$ is invariant under
$Z^p_{22'}Z^q_{55'}$, and therefore the whole factorization is
invariant under $Z^{p_1}_{11'}Z^{p_2}_{22'}\Pi_{j=4}^8
Z^{p_j}_{jj'},\, p_j \in \Z$.}
\end{remark}

As was explained, during the regeneration process, every generator
$\G_j$ is doubled into two generators: $\G_j$ and $\G_{j'}$, so
$\p(\C^2-B_1)$ is generated by $\{\G_j,\G_{j'}\}_{j=1}^8$. From now
on, we denote the generator $\G_j$ by $j$ and the generator
$\G_{j'}$ by $j'$. Let $\underline{j}$ denote $j$ or $j'$, and $e$
the unit element in $\p(\C^2-B_1)$.

\begin{notation}
$[a,b] = aba^{-1}b^{-1},\, <\!a,b\!> = abab^{-1}a^{-1}b^{-1},\,a_b
= b^{-1}ab$.
\end{notation}

\begin{prs} \label{prs1}
$G_1 \doteq \p(\C^2-B_1)$ is generated by $\{j,j'\}_{j=1}^8$ and has the
following relations:
\begin{enumerate}
\item [\emph{(1)}] $1=1'$
\item [\emph{(2)}] $\langle  \un 6,\un 7 \rangle = \langle  \un 4,\un 8 \rangle =\langle  \un 1,\un 2 \rangle =\langle  \un 3,\un 4 \rangle =
\langle  \un 5,\un 6 \rangle =\langle  \un 7,\un 8 \rangle = e$
\item [\emph{(3)}] $7' = 6^{-1}6'^{-1}76'6,\quad 4=8'84'8^{-1}8'^{-1},\quad 1=2'21'2^{-1}2'^{-1}, \quad \\
   4'=3^{-1}3'^{-1}43'3, \quad 5=6'65'6^{-1}6'^{-1},\quad 7=8'87'8^{-1}8'^{-1}$
\item [\emph{(4)}] $[\un 1, \un 3] = [\un 2, \un 4] = [\un 1, \un 4] = [\un 1, \un 5] = [\un 4, \un 5] = e\\$
$[\un i, \un 6] = e , 1 \leq i \leq 4, \quad [\un i, \un 7] = e , 1 \leq i \leq 5, \quad [\un i, \un 8] = e ,
 1 \leq i \leq 6, i \neq 4 $

\item [\emph{(5)}] $\langle  \un 2,3 \rangle = \langle  \un 5,3'  \rangle = e,\\$
$5'53'5^{-1}5'^{-1} = 32'232^{-1}2'^{-1}3^{-1}$,\\ $[3\un 2
3^{-1},\un 5] = e.$
\end{enumerate}

\end{prs}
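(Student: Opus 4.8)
The plan is to apply the Zariski–Van Kampen theorem (Theorem \ref{thmVK}) to the braid monodromy factorization of $B_1$ given in Theorem \ref{thmBMF1}, and to read off the resulting presentation of $G_1 = \pi_1(\C^2 - B_1)$. First I would fix a $g$-base $\{\G_j, \G_{j'}\}_{j=1}^8$ of $\pi_1(\C_u - B_1, u)$, ordered so that the generators appear in the order $1,1',2,2',\dots,8,8'$ along the fiber, consistent with the numeration of lines in Figure 18. The factorization $\Delta^2 = \prod_{i=8}^1 C_i \cdot H_i$ has three kinds of contributing pieces: the parasitic pieces $C_i$ (built from the $D_j$'s, which are products of braids of the form $Z^2_{kk',jj'}$ and $\overset{(\cdot)}{\lz^2}_{\cdot}$, possibly with a path that travels over a pair $(k-k')$), the degenerated-branch-point factor $H_1 = Z_{1,1'}$, the $3$-point factors $H_i$ for $i=3,4,5,6,7,8$ (each of the form $Z^{(3)}_{a',bb'}\cdot \widetilde Z_{aa'}$ or $Z^{(3)}_{aa',b}\cdot\widetilde Z_{bb'}$), and the special factor $H_2 = \varphi(2,3,5)$ coming from Proposition \ref{prs2.1} and its regeneration (Remark \ref{remReg}).

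For each factor I would compute the pair $(A_j, B_j)$ with $(A_j)V_j = B_j$ by tracing the half-twist's path through the fiber, using the conventions illustrated in Figures 14 and 16: a node factor $Z^2$ contributes a commutator relation $[A,B]=e$, a cusp factor $Z^3$ contributes a triple relation $\langle A,B\rangle = e$, and a branch factor $Z^1$ contributes $A = B$. Concretely, $H_1 = Z_{1,1'}$ gives relation (1), $1 = 1'$. The cusp parts of the $3$-point factors $H_i$ give the six triple relations in (2) — one for each $v_i$, $i=3,\dots,8$, between the two generators (or rather their $\underline{\cdot}$ representatives) attached to the two lines through $v_i$ — while the branch-type parts $\widetilde Z_{aa'}$ or $\widetilde Z_{bb'}$ of these factors, whose paths wind around the intermediate points, give the six conjugated identifications in (3). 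The parasitic factors $D_3,\dots,D_8$, after expanding each $Z^2_{kk',jj'}$ into its four nodal sub-factors and using that the paths are either below the axis or detour over a specified pair, yield the commutation relations in (4) between generators indexed by lines with no common vertex; here I would invoke the argument already cited in Subsection \ref{subsecParasitic} (via \cite[Thm IX.2.2]{MoTe1}) that the conjugating braids $\alpha$ appearing in these commutators can be absorbed, so the relations reduce to honest commutators $[\underline i,\underline j]=e$. Finally, $H_2 = \varphi(2,3,5) = Z^{(3)}_{22',3}Z^{(3)}_{3',55'}\widetilde Z_{33'}\overset{(3)}{\lz^2}_{22',55'}$ produces the relations in (5): the two cusp factors give $\langle \underline 2, 3\rangle = e$ and $\langle \underline 5, 3'\rangle = e$, the factor $\widetilde Z_{33'}$ (whose path is the one drawn in Figure 16 with $a=2,c=5$ doubled) gives the identification $5'53'5^{-1}5'^{-1} = 32'232^{-1}2'^{-1}3^{-1}$, and the node factor $\overset{(3)}{\lz^2}_{22',55'}$ gives $[3\underline 2 3^{-1}, \underline 5] = e$.

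The main obstacle I anticipate is the bookkeeping for the factor $H_2$ and, to a lesser extent, for the parasitic pieces $D_6, D_7, D_8$ whose paths carry the "$(5-5')$" (and implicitly other) detour decorations. For $H_2$ one must carefully carry the Lefschetz diffeomorphism corresponding to the path in Figure 16 — this is exactly the subtlety recorded in Remark \ref{remReg} and Proposition \ref{prs2.1}, where the first line to regenerate near $v_2$ is $L_3$, not $L_2$, so the roles of the indices are permuted relative to a "standard" $3$-point. Getting the precise conjugating elements right in relations (3) and (5) (e.g. whether one conjugates by $6$, by $6'$, or by $6'6$) requires keeping track of whether each half-twist path passes above or below each intermediate point, which is the genuinely error-prone part; everything else is a mechanical application of Theorem \ref{thmVK} together with the invariance remarks already established (the whole factorization being invariant under $Z^{p_1}_{11'}Z^{p_2}_{22'}\prod_{j=4}^8 Z^{p_j}_{jj'}$), which one uses to normalize the generators before reading off the relations. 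Once all factors are processed and the redundant relations discarded, the listed relations (1)–(5) are exactly what Van Kampen produces, completing the proof.
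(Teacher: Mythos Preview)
Your proposal is correct and follows essentially the same approach as the paper: apply the Zariski--Van Kampen theorem to the braid monodromy factorization of Theorem~\ref{thmBMF1}, with relation~(1) coming from $H_1=Z_{1,1'}$, relations~(2) and~(3) from the $3$-point factors $H_3,\dots,H_8$, relations~(4) from the parasitic factors $C_i$, and relations~(5) from $H_2=\varphi(2,3,5)$, using the invariance of the factorization under $Z^{p_j}_{jj'}$ to pass to the $\underline j$ form. The only ingredient the paper names that you do not mention explicitly is the \emph{complex conjugation method}, which is used alongside invariance to obtain the full family of $\underline j$-relations in~(2) and~(5); your path-tracing would in effect reproduce this, but it is worth flagging as a separate standard tool rather than burying it in the bookkeeping.
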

\begin{proof} In the proof, we use the Van-Kampen theorem (Theorem \ref{thmVK}), the complex conjugation
method and the invariance relations. Relation (1) is induced from
the braid $Z_{11'}$. Relations (2) and (3) are induced, using
Van-Kampen and invariance, from the factors $H_i, 3 \leq i \leq
8$. Relations (4) are induced from the parasitic intersection
points -- the factors $C_i$. Relations (5) are induced from the
factors in $H_2$. \end{proof}

\begin{prs} \label{prs2} The following relations hold in $G_1$:
\begin{enumerate}
\item [\emph{(6)}] $\langle  \un 2,\un 3 \rangle = \langle  \un 3,\un 5 \rangle = \langle  \un
2,\un 5 \rangle = e$ \item [\emph{(7)}] $[\un 2^{-1}\un 3 \un 2, \un
5]=e$
\end{enumerate}
\end{prs}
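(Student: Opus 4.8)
The plan is to deduce the relations (6) and (7) purely algebraically from the relations (1)--(5) of Proposition~\ref{prs1}, working inside the group $G_1$. The key observation is that relations (5) already give us triple relations and a commutator relation involving the generators $2,2',3,3',5,5'$, and what remains is to ``upgrade'' the partial triple relations $\langle\un 2,3\rangle = \langle \un 5,3'\rangle = e$ in (5) to the full set $\langle\un 2,\un 3\rangle = \langle\un 3,\un 5\rangle = \langle \un 2,\un 5\rangle = e$ in (6), and likewise to promote $[3\un 2 3^{-1},\un 5]=e$ to the cleaner form $[\un 2^{-1}\un 3\un 2,\un 5]=e$ in (7).

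First I would record the consequences of relation (3) for the indices $2,3,5$ together with relation (1): the identities $4' = 3^{-1}3'^{-1}43'3$ and $5 = 6'65'6^{-1}6'^{-1}$ tie $3,3'$ (resp. $5,5'$) to conjugation actions, but more directly I expect to use the relation $5'53'5^{-1}5'^{-1} = 32'232^{-1}2'^{-1}3^{-1}$ from (5) as the bridge between the $\{2,2'\}$-side and the $\{5,5'\}$-side. The standard technique here (as in the Moishezon--Teicher computations cited, e.g.\ \cite{MoTe4}, \cite{MoTe1}) is: from $\langle \un 2,3\rangle=e$ one gets that $3$ is conjugate to $\un 2$ by a word in $\un 2,3$, so that $3\un 2 3^{-1}$ equals a conjugate of $\un 2$; substituting into $5'53'5^{-1}5'^{-1} = 32'232^{-1}2'^{-1}3^{-1}$ and using $\langle \un 5,3'\rangle = e$ to rewrite the left-hand side in terms of $3',3,5$ should collapse the expression and yield both the missing braid-type relations between pairs from $\{2,3,5\}$ and the commutation of $\un 5$ with the conjugate $\un 2^{-1}\un 3\un 2$. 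Throughout I would freely use that $1=1'$ and that the ``barred'' notation $\un j$ means the relations hold for either choice of $j$ or $j'$, so each manipulation is carried out for all $2^k$ sign choices simultaneously, which is legitimate because relations (2)--(5) are themselves stated in barred form.

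Concretely, the steps in order would be: (a) from $\langle \un 2,3\rangle=e$ derive $\un 2\cdot 3 = 3\cdot \un 2'$ type exchange identities and hence an explicit formula for $3\un 2 3^{-1}$; (b) symmetrically from $\langle\un 5,3'\rangle=e$ get the exchange behaviour of $3'$ with $\un 5$; (c) feed these into the ``mixed'' relation $5'53'5^{-1}5'^{-1} = 32'232^{-1}2'^{-1}3^{-1}$ to eliminate the primed generators on each side, producing a relation among $\un 2,\un 3,\un 5$ only; (d) combine with $[3\un 2 3^{-1},\un 5]=e$ and relation (1) to read off $\langle \un 3,\un 5\rangle=e$, then $\langle\un 2,\un 5\rangle=e$, then $\langle\un 2,\un 3\rangle=e$, and finally $[\un 2^{-1}\un 3\un 2,\un 5]=e$ by conjugating $[3\un 2 3^{-1},\un 5]=e$ appropriately. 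The main obstacle I anticipate is bookkeeping in step (c): one must be careful that the primed/unprimed indices produced by the regeneration are matched consistently on both sides of $5'53'5^{-1}5'^{-1} = 32'232^{-1}2'^{-1}3^{-1}$, since a wrong pairing gives a relation that looks similar but is not equivalent; verifying that every one of the barred choices is covered (rather than just one representative) is where the argument could silently go wrong, so I would check the four ``extreme'' sign patterns explicitly before asserting the general barred statement.
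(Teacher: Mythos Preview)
Your overall strategy—deriving (6) and (7) algebraically from (1)--(5)—is correct, and you are right that the ``mixed'' relation $5'53'5^{-1}5'^{-1} = 32'232^{-1}2'^{-1}3^{-1}$ is the bridge needed for $\langle\un 2,\un 5\rangle = e$. However, your concrete plan (a)--(d) has a genuine gap: it tries to work entirely inside the subgroup generated by $\{1,2,2',3,3',5,5'\}$, and this is not enough. Relation~(5) only gives you $\langle\un 2,3\rangle = e$ and $\langle\un 5,3'\rangle = e$; to promote these to the full barred relations $\langle\un 2,\un 3\rangle = \langle\un 3,\un 5\rangle = e$ you must be able to swap $3\leftrightarrow 3'$, and the only relation linking $3$ to $3'$ is the one in~(3) involving $4,4'$. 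The same issue blocks your step~(c): the mixed relation has $3'$ on one side and $3$ on the other, and you cannot ``eliminate the primed generators'' without a $3\leftrightarrow 3'$ conversion.

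The paper's proof uses exactly this external route: from $4' = 3^{-1}3'^{-1}43'3$ one obtains expressions such as $3 = 4'4\,3'\,4^{-1}4'^{-1}$, and then the commutator relations $[\un 4,\un 2] = [\un 4,\un 5] = e$ from~(4) together with $\langle\un 3,\un 4\rangle = e$ from~(2) allow the $4,4'$ factors to be absorbed. For instance, $\langle 3',\un 5\rangle = e$ becomes $\langle 4^{-1}34'3^{-1}4,\un 5\rangle = e$, which after stripping the commuting $4$'s and using $\langle 3,4'\rangle = e$ collapses to $\langle 3,\un 5\rangle = e$. The same trick, applied to $[\un 2^{-1}3\un 2,\un 5]=e$ (which, as you correctly note, is equivalent to $[3\un 23^{-1},\un 5]=e$ via $\langle\un 2,3\rangle = e$), yields the $3'$ case of~(7). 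Only \emph{after} these steps does the paper feed the mixed relation in, together with invariance under $Z_{55'}$ and the already-established $[3\un 23^{-1},5']=e$, to conclude $\langle 2,5'\rangle = e$. So your plan needs an extra ingredient before step~(c): insert the $4$-conjugation trick to get $\langle\un 2,\un 3\rangle = \langle\un 3,\un 5\rangle = e$ first; then the rest of your outline goes through.
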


\begin{proof} By Proposition \ref{prs1} ((5) and (3)), it is known that
$$e = \langle  3',\un 5  \rangle = \langle   4^{-1}34'3^{-1}4, \un 5  \rangle \underset{[4,\un 5]=e}{=} \langle   34'3^{-1}, \un 5  \rangle \underset{\langle  3, 4' \rangle = e}{=} \langle   4'^{-1}34',\un 5  \rangle \underset{[4',\un 5]=e}{=} \langle   3,\un 5  \rangle.$$ Thus $\langle  \un 3,\un 5  \rangle=e.$ Also, we have: $$e = \langle   \un 2, 3  \rangle = \langle   \un 2, 4'43'4^{-1}4'^{-1} \rangle \underset{[\un 4,\un 2]=e}{=} \langle   \un 2, 3' \rangle\quad  \Rightarrow \quad \langle  \un 2,\un 3  \rangle=e.$$
From relation (5) we get $3' =
5^{-1}5'^{-1}32'232^{-1}2'^{-1}3^{-1}5'5$ and also
$$e = \langle   3', 5  \rangle = \langle   5^{-1}5'^{-1}32'232^{-1}2'^{-1}3^{-1}5'5,5  \rangle = \langle  32'232^{-1}2'^{-1}3^{-1} , 5'55'^{-1} \rangle \underset{\mbox{\small{Invariance}}\, Z_{55'}}{=}$$$$ \langle   32'232^{-1}2'^{-1}3^{-1} , 5' \rangle \underset{\langle  2, 3  \rangle = e}{=} \langle   32'3^{-1}232'^{-1}3^{-1}, 5' \rangle \underset{[3\un 23^{-1}, 5']=e}{=} \langle   2 , 5' \rangle$$
and by invariance relations we get $\langle  \un 2,\un 5 \rangle = e.$ This completes the proof of (6).\\
From (5) we have $$e = [3\un 2 3 ^{-1}, \un 5 ] = [\un 2^{-1} 3
\un 2 ,\un 5 ]= [\un 2^{-1}4'43'4^{-1}4'^{-1} \un 2 ,\un 5 ]
\underset{[\un 4,\un 2]= [\un 4,\un 5]=e}{=} [\un 2^{-1} 3' \un 2
,\un 5 ].$$ Thus $[\un 2^{-1} \un 3 \un 2 ,\un 5 ] =
e.$\end{proof}

Our next task is to express the generators $j'$ ($j=1,2,3,5,..,8$) by the generators $1 \leq j \leq 8$ and $4'$.
This is easy: using (3), we get
\begin{enumerate}
\item [{(8)}] $1'=1,\quad 2' = 1^{-1}212^{-1}1,\quad 3' = 4^{-1}34'3^{-1}4, \quad 8' = 4^{-1}84'8^{-1}4 $\\
$7' = 8^{-1}8'^{-1}78'8, \quad 6' = 7^{-1}67'6^{-1}7, \quad 5' = 6^{-1}6'^{-1}56'6. $
\end{enumerate}

Therefore, the group $G$ is generated by the generators
$\{j\}_{j=1}^8 \cup \{4'\}$. We note that all the commutator and
triple relations (i.e., (2), (4), (6), (7)) that involve the
generators $j'$ where $j=1,2,3,5,..,8$ can be reduced, since these
$j'$`s are expressed in terms of the other generators. Our
task now is to reduce most of the relations coming from the branch
points, i.e. (3) and the second relation at (5). Notice that all
of the relations in (3) are already reduced, as we have used them
to define the generators $j'$ (by (8)). However, one can see that,
for example, in the second relation in (5) we can substitute the
generators $j'$ using (8), till we get an expression containing
only the generators $\{j\}_{j=1}^8 \cup \{4'\}$. Therefore, we get
the following relation:

\begin{enumerate}
\item [{(9)}]
$(4')_{3^{-1}45^{-1}6^{-1}7^{-1}8^{-1}4^{-1}84'^{-1}8^{-1}47^{-1}6^{-1}5^{-1}}
= (3)_{2^{-1}1^{-1}21^{-1}2^{-1}13^{-1}}.$
\end{enumerate}

\begin{notation} \label{NotPho}
\emph{Denote relation (9) by $\rho_1$.}
\end{notation}

 Note  that (9) can be described as a ``global" relation, involving
almost all the generators of the group. We need only to find out
what are the ``local" relations, involving only the
generators $4,4',3$ and $8$.
\begin{prs} \label{prs3} The following relations hold in $G_1$:
\begin{enumerate}
\item [\emph{(10)}] $\langle   8 4'  8^{-1}, 4 \rangle = \langle   3 4' 3^{-1},
4 \rangle = e.$
\item [\emph{(11)}] $[3^{-1}43,84'8^{-1}]=e.$
\end{enumerate}
\end{prs}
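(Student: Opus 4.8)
The plan is to derive (10) and (11) purely algebraically from the relations already available in $G_1$, namely those collected in Propositions \ref{prs1} and \ref{prs2}, together with the substitution (8) and the ``global'' relation $\rho_1$ of Notation \ref{NotPho}. The idea is the same device used throughout the proof of Proposition \ref{prs2}: take a relation we already know, conjugate it, and repeatedly push conjugating letters through using the commutation relations (4) and the triple (braid) relations (2), (6) so that the ``foreign'' generators disappear and only $3,4,4',8$ survive.

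First I would establish (10). Start from the triple relation $\langle 3,4'\rangle = e$, which is relation (2) read as $\langle \un 3,\un 4\rangle = e$ specialized to the $4'$ copy — more precisely from (3) we have $4' = 3^{-1}3'^{-1}43'3$, so $\langle 3, 4'\rangle = e$ is immediate from $\langle \un 3,\un 4\rangle=e$ in (2) after substituting $3'$ via (8) and simplifying with $[3',\text{(stuff)}]$. Then conjugate $\langle 3,4'\rangle=e$ by $3^{-1}43$: since $[\un 3,\un 4']$-type and the braid relation $\langle 3,4'\rangle$ control how $3$ and $4'$ interact, and since $4$ braids with $3$ and with $4'$ in the appropriate way, one obtains $\langle 3\,(4')_{3^{-1}43}\,3^{-1}, \cdot\rangle$-type reshuffling; the cleaner route is to observe $\langle 34'3^{-1},4\rangle$ is obtained from $\langle 4',4\rangle$-type data by conjugation and invariance. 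For the first equality in (10), I would similarly conjugate a known braid relation involving $8$ and $4'$: from (3), $4 = 8'84'8^{-1}8'^{-1}$, hence $\langle 84'8^{-1},4\rangle$ should reduce to a triple relation among $8,8',4'$ using $8' = 4^{-1}84'8^{-1}4$ from (8) and the commutation $[\un 4,\un 8]$-adjacent relations. The mechanical content is: express $4$ in terms of $8,8',4'$, substitute, and collapse using the invariance under $Z_{88'}$ and the relations already proved.

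Next, (11): $[3^{-1}43, 84'8^{-1}] = e$. The natural source is relation (7) of Proposition \ref{prs2}, $[\un 2^{-1}\un 3\un 2, \un 5] = e$, transported along the ``chain'' $2\!-\!3\!-\!4\!-\!5\!-\!\cdots\!-\!8$ via the branch-point relations (3); alternatively, and more likely the intended route, (11) is exactly what is needed to make the global relation $\rho_1$ = (9) consistent with the local ones, so one derives (11) by feeding (10) and (8) back into (9) and isolating the commutator of $3^{-1}43$ with $84'8^{-1}$. Concretely: rewrite (9) using (8) to eliminate every primed generator except $4'$, use the commutation relations (4) to cancel the long conjugating words on the left-hand side (the generators $5,6,7$ commute with $4$ and, suitably, with $3$, $8$), and use the braid relations (2) for the adjacent pairs $\langle 3,4\rangle$, $\langle 7,8\rangle$, $\langle 4,8\rangle$ to telescope the right-hand side; what remains is a relation equivalent to $[3^{-1}43, 84'8^{-1}] = e$.

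The main obstacle I anticipate is the bookkeeping in reducing $\rho_1$: the word on the left of (9) is long and involves $3,4,4',5,6,7,8$, and one must be careful that the commutation relations in (4) are applied only to pairs $(\un i,\un j)$ that actually appear there (e.g. $[\un 4,\un 5]=e$, $[\un i,\un 8]=e$ for $i\neq 4$, but note $[\un 4,\un 8]\neq e$ — instead $4,8$ satisfy a braid relation), so the cancellations are not completely free and the order of moves matters. A secondary subtlety is that several of these manipulations require invariance of the braid monodromy factorization (invariance under $Z^q_{kk'}$) rather than a bare relation in $G_1$; I would invoke the invariance statements recorded just after Theorem \ref{thmBMF1} at each such step, exactly as in the proof of Proposition \ref{prs2}. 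Once the global relation is reduced, (10) and (11) should drop out as the ``irreducible core'' of the remaining relations among $3,4,4',8$.
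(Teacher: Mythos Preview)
Your approach to (10) is roughly aligned with the paper's, though you make it harder than necessary. The paper simply substitutes the branch-point relation from (3), namely $4' = 3^{-1}3'^{-1}43'3$, directly into $\langle 34'3^{-1},4\rangle$ to get $\langle 3'^{-1}43',4\rangle$, which is $e$ by the braid relation $\langle 3',4\rangle = e$ from (2). The $8$-case is symmetric, using $4' = 8^{-1}8'^{-1}48'8$. No conjugation of a known relation, no detour through $\langle 3,4'\rangle$, no appeal to $\rho_1$ --- just one substitution and one braid relation.

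Your plan for (11), however, has a genuine gap. You propose to extract $[3^{-1}43,\,84'8^{-1}]=e$ from the global relation $\rho_1$ (relation (9)) by cancelling the long conjugating words. This is backwards. In the paper's final presentation (Proposition \ref{prs4}) one has $G_1 \simeq \hat A(T_1)/\rho_1$, where (10) and (11) are precisely the relations (iv)(1) and (iv)(2) in the \emph{definition} of $\hat A(T_1)$, while $\rho_1$ is the single extra relation by which one quotients. So $\rho_1$ is not redundant modulo the other relations; reducing it using (4), (2), (8), (10) will not leave (11) as a residue --- it will leave $\rho_1$ itself in some form. Your alternative idea of ``transporting'' relation (7) along the chain $2$--$3$--$4$--$\cdots$--$8$ is too vague to assess: relation (7) lives among generators $2,3,5$, and there is no mechanism in the relations that moves it to $3,4,4',8$.

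The paper's actual proof of (11) is again a direct substitution: replace $4$ by $8'84'8^{-1}8'^{-1}$ (from (3)), push $3$ past $8,8'$ using $[3,\underline{8}]=e$ from (4), use the invariance under $Z_{8,8'}$ to simplify $8^{-1}8'8$ to $8'$, and then use the braid relations $\langle 3,\underline 4\rangle = \langle 8',\underline 4\rangle = e$ to flip conjugations, finally reducing the commutator to $[3,8']=e$. The global relation $\rho_1$ is never touched.
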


\begin{proof} Knowing that $4' = 3^{-1}3'^{-1}43'3 $
we see that:
$$\langle   3 4' 3^{-1}, 4 \rangle = \langle   33^{-1}3'^{-1}43'33^{-1}, 4  \rangle = \langle   3', 4 \rangle \underset{\mbox{rel.} (2)}{=} e. $$
The same is dome for the second relation, using $4' =
8^{-1}8'^{-1}48'8$. This proves relation set (10).

For the relation (11), we use the relation $4 = 8'84'8^{-1}8'^{-1}$.
$$
[3^{-1}43,84'8^{-1}] = [3^{-1}8'84'8^{-1}8'^{-1}3,84'8^{-1}] \underset{[3,\underline{8}]=e}{=}
[8'83^{-1}4'38^{-1}8'^{-1},84'8^{-1}] = $$
$$[8^{-1}8'83^{-1}4'38^{-1}8'^{-1}8,4']
\underset{\mbox{Inv. } Z_{8,8'}}{=} [8'3^{-1}4'38'^{-1},4'] = [3^{-1}4'3,8'^{-1}4'8']
\underset{\langle 3,\underline{4}\rangle=\langle 8',\underline{4}\rangle=e}{=} $$$$ [4'34'^{-1},4'8'4'^{-1}] = [3,8'] = e.
$$

\end{proof}

The last relation we want to induce concerns the fact that once
the we have two ``circles" in the graph associated to the
generators (see Figure 19 in Proposition \ref{prs4}), we ought to
find a triple relation relating each two edges that intersect in
one vertex.
\begin{prs} \label{prsCycleRel} The following relation holds in $G_1$:
\begin{enumerate}
\item [\emph{(12)}] $\langle   3^{-1}43 , 56784'8^{-1}7^{-1}6^{-1}5^{-1}  \rangle = e.$
\end{enumerate}
\end{prs}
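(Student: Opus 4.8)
The plan is to deduce relation (12) from the relations already in hand --- the braid relations of (2) and (6), the commutations of (4), the two ``local'' relations (10) and (11) around the point $v_4$, and (if a genuinely new input is still needed) the ``global'' relation $\rho_1$ of (9) --- together with the invariance of the braid monodromy factorization of $B_1$ under $Z^{p_1}_{11'}Z^{p_2}_{22'}\prod_{j=4}^{8}Z^{p_j}_{jj'}$, which lets us conjugate any established relation of $G_1$ freely. Conceptually, (12) is nothing but the braid relation between the two copies of the edge $L_4$ that one meets when closing up the ``long'' cycle of the generator graph (cf. Figure 19), through $L_3,L_4,L_8,L_7,L_6,L_5$, exactly as (10), (11) encode the relations of the short cycle through $L_3,L_4,L_8$.

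First I would normalize the two entries. From $\langle\un 3,\un 4\rangle=e$ one has the half--twist identity $3^{-1}43=4\cdot 3\cdot 4^{-1}$, and directly from the notation $56784'8^{-1}7^{-1}6^{-1}5^{-1}=(567)\,(84'8^{-1})\,(567)^{-1}$. Since $\langle\,\cdot\,,\cdot\,\rangle=e$ is preserved under simultaneous conjugation, it suffices to prove $\langle (567)^{-1}(4\cdot 3\cdot 4^{-1})(567),\,84'8^{-1}\rangle=e$. Now I pull the word $567$ through: using $[\un 4,\un 5]=[\un 4,\un 6]=[\un 4,\un 7]=e$ and $[\un 3,\un 6]=[\un 3,\un 7]=e$ from (4), only $5^{-1}\cdot 3\cdot 5$ survives, and applying the braid identities $\sigma^{-1}\tau\sigma=\tau\sigma\tau^{-1}$ coming from $\langle\un 3,\un 5\rangle=\langle\un 5,\un 6\rangle=\langle\un 6,\un 7\rangle=e$ one rewrites $(567)^{-1}(4\cdot 3\cdot 4^{-1})(567)=(4356)\cdot 7\cdot(4356)^{-1}$. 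Conjugating once more by $4356$ turns the target into $\langle 7,\,(4356)^{-1}(84'8^{-1})(4356)\rangle=e$; using $[\un 3,\un 8]=[\un 5,\un 8]=[\un 6,\un 8]=e$ and the braid identity $4^{-1}(84'8^{-1})4=(84'8^{-1})\cdot 4\cdot(84'8^{-1})^{-1}$ supplied by (10), the right--hand entry collapses to a word in $3,4,4',8$ alone. At that point the statement is ``local'' and should close by the same bookkeeping as in the proof of Proposition \ref{prs3}: feed in $4=8'84'8^{-1}8'^{-1}$ from (3), the braid relations $\langle\un 3,\un 4\rangle=\langle\un 4,\un 8\rangle=e$, the commutation $[\un 3,\un 8]=e$, and the commutator relation (11) $[3^{-1}43,84'8^{-1}]=e$ (together, if needed, with $\langle 34'3^{-1},4\rangle=e$ from (10)); should a further relation still be required it is exactly $\rho_1$ of (9), whose two conjugating words simplify --- via (8), the commutations $[\un 4,\un 5]=[\un 4,\un 6]=[\un 4,\un 7]=e$, and the braid moves along the chain $5$-$6$-$7$-$8$ --- to the content of (12) after the already--proven local factors (10), (11) are cancelled off. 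One also gets, as a useful byproduct, $\langle 56784'8^{-1}7^{-1}6^{-1}5^{-1},\,4\rangle=e$ for free by conjugating (10) by $7^{-1}6^{-1}5^{-1}$.

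The main obstacle is the bookkeeping, not the ideas. None of the long conjugating words $(567)^{\pm1}$, $(4356)^{\pm1}$, $(8765)^{\pm1}$ can simply be deleted: the cycle is a chain of braid relations $5$-$6$-$7$-$8$, the pairs $(3,5)$ and $(4,8)$ also braid, and $4$ and $4'$ do not commute, so every pass must be carried out explicitly through the identities $\sigma^{-1}\tau\sigma=\tau\sigma\tau^{-1}$ and $\sigma\tau\sigma^{-1}=\tau^{-1}\sigma\tau$, and one must track the orientation of each move --- this is where sign errors enter. One must also check at each conjugation that invariance of the braid monodromy factorization of $B_1$ under the relevant $Z_{jj'}$ is actually available, so that the conjugated relation is legitimately a relation of $G_1$: for $j=1,2,4,\dots,8$ this is precisely the invariance recorded after Theorem \ref{thmBMF1}, while the moves involving the generator $3$ (and $3'$), which is \emph{not} among the $Z_{jj'}$ under which the factorization is invariant, must instead be justified directly from the relations of Proposition \ref{prs1}.
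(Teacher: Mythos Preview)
Your overall strategy---reduce by successive conjugations using the braid identities $\sigma^{-1}\tau\sigma=\tau\sigma\tau^{-1}$ along the chain $3$--$5$--$6$--$7$--$8$ together with the commutations of (4)---is correct and close in spirit to the paper's argument. The paper, however, runs the reduction in the opposite direction and more directly: it first proves $\langle 3^{-1}43,5\rangle=e$ (from $\langle 3,4\rangle=e$ and $[4,5]=e$, reducing to $\langle 3,5\rangle=e$), then peels $5,6,7$ off the right entry one at a time, invokes (11) once to drop the outer conjugation by $3^{-1}43$, and finishes with
\[
\langle 7^{-1}6^{-1}567,\,84'8^{-1}\rangle
\;\rightsquigarrow\;
\langle 7^{-1}67,\,84'8^{-1}\rangle
\;\rightsquigarrow\;
\langle 7,\,84'8^{-1}\rangle
\;=\;\langle 7,8\rangle\;=\;e,
\]
using only $[5,7]=[5,8]=[5,4']=[6,8]=[6,4']=[7,4']=e$ and $\langle 5,6\rangle=\langle 6,7\rangle=\langle 7,8\rangle=\langle 4',8\rangle=e$. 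No invariance of the BMF and no use of $\rho_1$ enter at all.

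There are two genuine issues in your write-up. First, after you reach $\langle 7,(4356)^{-1}(84'8^{-1})(4356)\rangle$ and strip $5,6$ (which do commute with $84'8^{-1}$), the remaining statement is $\langle 7,\,3^{-1}4^{-1}(84'8^{-1})43\rangle=e$; this is \emph{not} ``local in $3,4,4',8$'' as you describe---the $7$ is still there, and the closing step needs precisely $[7,3]=[7,4]=[7,4']=e$ and $\langle 7,8\rangle=e$ (after which it collapses to $\langle 8,4'\rangle=e$). Feeding in $4=8'84'8^{-1}8'^{-1}$ and the relations (10),(11) alone cannot finish it, since none of those involve $7$. Second, your hedge that $\rho_1$ ``should a further relation still be required'' is the needed input is misleading and conceptually off: relation (12) is exactly the cycle relation of Definition~\ref{def1}(v) for $T_1$, i.e.\ a defining relation of $\hat A(T_1)$, and must be (and is) derivable \emph{without} $\rho_1$; otherwise the presentation $G_1\simeq \hat A(T_1)/\rho_1$ in Proposition~\ref{prs4} would not make sense. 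The paper's proof confirms this: $\rho_1$ is never used.
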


\begin{proof}
First, we prove that $\langle   3^{-1}43 , 5  \rangle = e $.
$$
\langle   3^{-1}43 , 5  \rangle \underset{\langle 3,4\rangle=e}{=} \langle   434^{-1} , 5  \rangle \underset{[5,4]=e}{=} \langle   3 , 5  \rangle = e.
$$
Thus
$$
\langle   3^{-1}43 , 56784'8^{-1}7^{-1}6^{-1}5^{-1}  \rangle = \langle   5^{-1}\cdot (3^{-1}43)\cdot 5 , 6784'8^{-1}7^{-1}6^{-1}  \rangle =
\langle   3^{-1}43\cdot 5\cdot (3^{-1}43)^{-1} , 6784'8^{-1}7^{-1}6^{-1}  \rangle $$$$
\underset{[6,3]=[7,3]=[6,4]=[7,4]=e}{=} \langle   3^{-1}43\cdot (7^{-1}6^{-1}567)\cdot (3^{-1}43)^{-1} , 84'8^{-1}  \rangle
\underset{\mbox{rel. } (11)}{=} \langle    (7^{-1}6^{-1}567)\cdot , 84'8^{-1}  \rangle \underset{\langle 5,6\rangle=e}{=}  $$$$
\langle    7^{-1}565^{-1}7, 84'8^{-1}  \rangle \underset{[5,7]=[5,8]=[5,4']=e}{=}
 \langle    7^{-1}67 , 84'8^{-1}  \rangle \underset{\langle 6,7\rangle=[6,8]=[6,4']=e}{=} \langle    7 , 84'8^{-1}  \rangle =  \langle    7 , 8 \rangle = e.
$$

\end{proof}

\begin{defi} \label{def1}
\emph{Let $T$ be a graph with $n$ vertices. In the spirit of \cite{RTU} and \cite{ASM}, denote
by $\hat A(T)$ the following \textsl{generalized Artin group}. This is the group generated by the edges $u \in T$
subject to the following relations:}
\begin{enumerate}
\item [\emph{(i)}] $uv = vu$ \emph{if} $u, v$ \emph{are disjoint}.
\item [\emph{(ii)}] $uvu = vuv$ \emph{if} $u, v$ \emph{intersect
in one vertex}.
 \item [\emph{(iii)}] $[u, vwv^{-1}] = e$ \emph{for} $u, v, w
\in T$ \emph{which meet in only one vertex}.

\item [\emph{(iv)}]\emph{for}
$u, v, v', w \in T$ \emph{which intersect in the following way:}

\begin{center}
\epsfig{file=./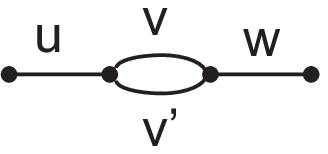}
\end{center}
the edges satisfy the relations:
\begin{enumerate}
\item [\emph{(1)}]
$\langle   w v'  w^{-1}, v \rangle = \langle   u v' u^{-1}, v \rangle = e$
\item [\emph{(2)}]
$ [u^{-1}vu,wv'w^{-1}] = e. $
\end{enumerate}
\item [\emph{(v)}] For two circles in the graph $T$, embedded in each other in the following way
\begin{center}
\epsfig{file=./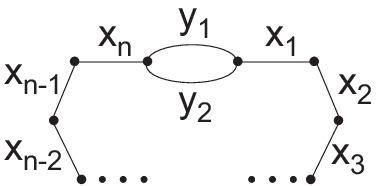}
\end{center}
The edges satisfy the relation:  $ \langle   x_n^{-1}y_1x_n ,x_{n-1}\cdot ...\cdot x_2x_1y_2x_1^{-1}x_2^{-1} \cdot ...\cdot x_{n-1}^{-1}  \rangle = e$.
\end{enumerate}

\end{defi}
 Summarizing propositions
\ref{prs1}, \ref{prs2}, relation (9), \ref{prs3} and \ref{prsCycleRel} we get the following
\begin{prs} \label{prs4}$G_1 \simeq  \hat A(T_1)/\rho_1$, where $T_1$
is the following graph:
\begin{center}
\epsfig{file=./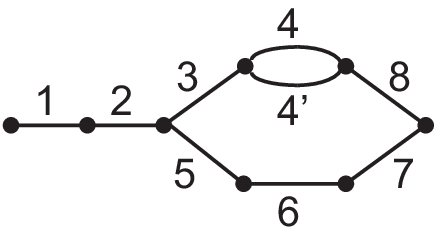}\\ \small{\emph{Figure 19}}
\end{center}

\end{prs}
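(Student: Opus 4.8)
The plan is to identify the presentation of $G_1 = \pi_1(\C^2 - B_1)$ that has been accumulated in Propositions \ref{prs1}, \ref{prs2}, relation $\rho_1$ (= (9)), Proposition \ref{prs3} and Proposition \ref{prsCycleRel} with the presentation of $\hat A(T_1)/\rho_1$ dictated by Definition \ref{def1}, where $T_1$ is the graph of Figure 19. First I would fix the dictionary between the two sides: the generators $\{j\}_{j=1}^{8}\cup\{4'\}$ of $G_1$ (after eliminating $j'$ for $j\neq 4$ via the substitutions (8)) correspond to the edges of $T_1$, with the edge labelled $j$ (resp. $4'$) going to the generator $j$ (resp. $4'$), and the incidence pattern of $T_1$ reading off exactly which pairs of edges are disjoint, which meet in a vertex, which form the ``$u,v,v',w$'' configuration of Definition \ref{def1}(iv), and which form the two nested circles of Definition \ref{def1}(v). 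One checks directly from Figure 19 that: the pairs of disjoint edges are precisely those for which a commutator relation appears in (4) and (6)--(7) after the substitution (8); the pairs meeting in a vertex are those carrying the triple relations in (2) and (6); the configuration (iv) is realized by the edges $3,4,4',8$ (giving relations (10) and (11)); and the two nested circles — the inner triangle on $3,4,4'$ and the outer circle through $3,4,5,6,7,8,4'$ — give relation (12).

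The argument then splits into the two inclusions of normal subgroups of the free group on $\{j\}_{j=1}^8\cup\{4'\}$. For ``$G_1$ is a quotient of $\hat A(T_1)/\rho_1$'': I would show every defining relation of $\hat A(T_1)$ (types (i)--(v) of Definition \ref{def1}, for the specific incidences of $T_1$) together with $\rho_1$ holds in $G_1$; this is exactly the content of Propositions \ref{prs1}--\ref{prsCycleRel} after rewriting the $j'$'s by (8), so it is essentially bookkeeping. For the reverse, ``$\hat A(T_1)/\rho_1$ is a quotient of $G_1$'': I would start from the full Van Kampen presentation of $G_1$ on $\{j,j'\}_{j=1}^8$ with relations (1)--(5), use (1) and (3) to eliminate $1',2',3',5',6',7',8'$ via (8), observe that this turns (2), the relevant parts of (3), (4), and (5) into exactly the relations (2), (6), (7), (10), (11), (12) plus $\rho_1$ (here Propositions \ref{prs2} and \ref{prs3} and \ref{prsCycleRel} are precisely the statements that the eliminated/simplified relations are consequences of these), and conclude that no relation of $G_1$ survives that is not among the relations of $\hat A(T_1)/\rho_1$ under the dictionary.

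The main obstacle is the reverse inclusion, specifically verifying that the relation set $\{(2),(6),(7),(10),(11),(12),\rho_1\}$ together with the substitutions (8) really generates (as a normal subgroup) all of $\{(1),\dots,(5)\}$ — i.e. that nothing from the Van Kampen presentation has been lost when passing to the reduced generating set and when replacing the ``raw'' relations (5) and (3) by their cleaned-up consequences. Concretely one must confirm that $\langle 2,3\rangle$-type relations in (5), the primed commutators in (4), and the second relation of (5) are recovered: the first two are handled by Proposition \ref{prs2} and the substitution (8) (they become instances of (6) and of commutators already present), and the last is, by definition, $\rho_1$. I expect this matching to be routine but tedious, relying on the invariance relations already recorded before Proposition \ref{prs1} and on the manipulations in the proofs of Propositions \ref{prs2}--\ref{prsCycleRel}; once the dictionary is pinned down, the isomorphism $G_1\simeq\hat A(T_1)/\rho_1$ follows by comparing the two presentations generator-by-generator and relation-by-relation.
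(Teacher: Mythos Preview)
Your proposal is correct and follows the paper's own approach: the paper presents Proposition~\ref{prs4} simply as a summary of Propositions~\ref{prs1}, \ref{prs2}, relation~(9), and Propositions~\ref{prs3}, \ref{prsCycleRel}, with no further argument. You have made explicit the two-sided presentation comparison that the word ``summarizing'' implicitly invokes, and your dictionary between the nine generators $\{1,\dots,8,4'\}$ and the edges of $T_1$, together with the matching of relation types (i)--(v) to relations (2), (4), (6), (7), (10), (11), (12), is exactly the bookkeeping the paper suppresses.
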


\begin{remark} \label{rem1}
\emph{Let $T_1, T_2$ be connected disjoint graphs. Then $\hat
A(T_1 \cup T_2) = \hat A(T_1) \times \hat A(T_2)$.}
\end{remark}

\subsection[The fundamental group related to $\CP^1 \times C_g,\, g>1$]{The fundamental group related to $\CP^1 \times C_g,\, g>1$} \label{secP1_Cg}

In this subsection, we compute the BMF of the branch curve $B_g$
of $\CP^1 \times C_g,\, g>1$ and the corresponding fundamental
group. We show the connections between these groups and the
twisted Artin group defined earlier (see Definition \ref{def1}).
We begin with the surface $\CP^1 \times C_2$.

\begin{construction} \label{cons2}\emph{
As in Construction \ref{cons1}, we can build a degeneration of
$\CP^1 \times C_2$. Embedding the rational scroll
$\CP^1 \times \CP^1$ with respect to the linear system $(1,6)$, we degenerate it into $S''$ a union  of $12$ planes $S_i$.
Choosing two pairs of lines $\ell_1,\ell_4$ in $S_1,S_4$ and $\ell_7,\ell_{10}$ in $S_7,S_{10}$, we can attach to each pair a quadric $Q_j$, $j=1,7$ such that $Q_j \cap S'' = \ell_{j} \cup \ell_{j+3}$. Degenerating each of the two quadrics into two planes, the union of the $16$ planes is a degenerated planar surface which is the degeneration of
$\CP^1 \times C_2$, as is proved in \cite[Theorem 4.6]{CCFR}. See Figure 20 for the degeneration.}
\end{construction}

\begin{center}
\epsfig{file=./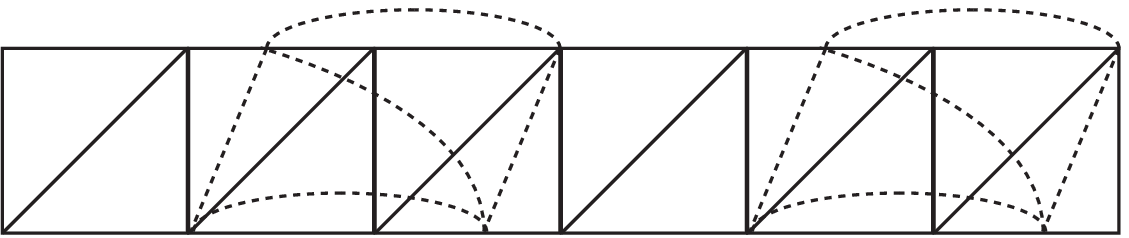}\\ \small{Figure 20 : Degeneration of
$\CP^1 \times C_2$}
\end{center}

Repeating the process described in the previous subsection, we
numerate the singularities $v_i,\, 1\leq i \leq 16$ of the
degenerated surface $\CP^1 \times C_2$ and the lines of
intersection $L_i,\, 1\leq i \leq 17$ as follows:

\begin{center}
\epsfig{file=./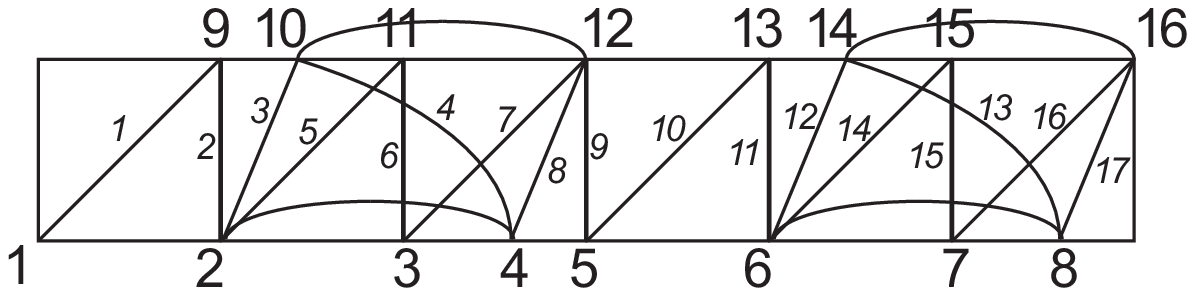}\\ \small{Figure 21}
\end{center}

Once again, we project the degenerated surface to $\CP^2$, compute
the BMF of the degenerated branch curve ($= \Pi_{i=16}^1
\widetilde{C_i}\Delta^2_i$) and regenerate it.
\begin{remark}
\emph{The points $v_i,\, 3 \leq i \leq 16,\, i\neq 6,12$ are
3-points, and their local regenerated BMFs are either
$Z^{(3)}_{a',bb'}\T \widetilde{Z}_{aa'}$ or $ Z^{(3)}_{aa',b}\T
\widetilde{Z}_{bb'}$ (where $v_i = L_a \cap L_b$). The point $v_1$ contributes the factor $Z_{1,1'}$ to the global
BMF. The local regenerated BMF of a
neighborhood of the points $v_2,v_6,v_{12}$ is computed as in
Remark \ref{remReg}.}
\end{remark}

\begin{thm}
The braid monodromy factorization of the branch curve $B_2$ of a
generic projection of $\CP^1 \times C_2$ embedded in $\CP^{17}$
is:
$$\Delta^2 = \prod_{i=16}^1 {C_i}\cdot H_i,$$ where\\
\begin{center}
$C_i = id,\, i = 1,9,..,16,\quad C_2 = D_3 \cdot D_5,\quad C_3 =
D_6 \cdot D_7,\quad C_4 = D_4 \cdot D_8,\quad C_5 = D_9 \cdot
D_{10}$
\end{center}
\begin{center}
 $C_6 = D_{11} \cdot D_{12} \cdot D_{14},\quad C_7 = D_{15}
\cdot D_{16},\quad C_8 = D_{13} \cdot D_{17}$
\end{center}
where
\begin{center}
$D_3 = \uzs_{11',33'},\quad D_4 =
\underset{(3-3')}{\uzs_{11',44'}}\underset{(3-3')}{\uzs_{22',44'}},\quad
D_5 = \Pi_{i=1}^2 \uzs_{ii',55'}\cdot Z^2_{44',55'},\quad D_6 =
\Pi_{i=1}^4 \underset{(5-5')}{\uzs_{ii',66'}}$\end{center}
\begin{center}$D_7 =
\Pi_{i=1}^5 \uzs_{ii',77'} ,\quad D_8 = \Pi_{i=1}^{3}
\underset{(7-7')}{\uzs_{ii',88'}}\cdot
\underset{(7-7')}{\uzs_{55',88'}}\uzs_{66',88'},\quad D_9 =
\Pi_{i=1}^6 \underset{(7-8')}{\uzs_{ii',99'}}$\end{center}
\begin{center}$ D_{10} =
\Pi_{i=1}^8 \uzs_{ii',10\,10'},\quad D_{11} = \Pi_{i=1}^9
\underset{(10-10')}{\uzs_{ii',11\,11'}},\quad D_{12} =
\Pi_{i=1}^{10} \uzs_{ii',12\,12'},\quad D_{13} = \Pi_{i=1}^{11}
\underset{(12-12')}{\uzs_{ii',13\,13'}}$\end{center}
\begin{center}$D_{14} =
\Pi_{i=1}^{10} \uzs_{ii',14\,14'}Z^2_{13\,13',14\,14'},\quad
D_{15} = \Pi_{i=1}^{13}
\underset{(14-14')}{\uzs_{ii',15\,15'}},\quad D_{16} =
\Pi_{i=1}^{14} \uzs_{ii',16\,16'}$\end{center}
\begin{center}$D_{17} = \Pi_{i=1}^{12}
\underset{(16-16')}{\uzs_{ii',17\,17'}}\cdot
\underset{(16-16')}{\uzs_{12\,12',17\,17'}}\uzs_{15\,15',17\,17'}$
\end{center}
and $$H_1 = Z_{1,1'},\quad H_i = Z^{(3)}_{a',bb'}\T
\widetilde{Z}_{aa'}
 \,\,\emph{for}\, i=4,8,9,11,13,15,16,\, H_i = Z^{(3)}_{aa',b}\T \widetilde{Z}_{bb'}
 \,\,\emph{for}\, i=3,5,7,10,14,$$ $\quad H_i = \varphi(a,b,c), \emph{where}\,\, i=2,6,12$ and $v_i$ is the
 intersection of the lines of $L_a, L_b, L_c$, and $L_b$ is regenerated
 first.\\
\end{thm}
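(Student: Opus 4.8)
The plan is to repeat, for the $16$-plane degeneration of Construction~\ref{cons2} and the numeration of the singular points $v_i$ and lines $L_i$ fixed in Figure~21, the computation already carried out for $\CP^1\times C_1$ in Theorem~\ref{thmBMF1}. Since the branch curve $B_0$ of the central fibre is a union of $17$ lines, a generic projection $B_0\to\CP^1$ together with a suitable $g$-base of $\pi_1(\C^1-N,u)$ gives a braid monodromy factorization of the shape $\Delta^2=\prod_{i=16}^{1}\widetilde{C_i}\,\Delta^2_i$, where $\Delta^2_i$ is the local factor over the image $v_i$ and $\widetilde{C_i}$ collects the node factors over the parasitic intersection points, grouped according to the order in which the planes (equivalently, the lines $L_j$) enter the degeneration (cf.\ Subsection~\ref{subsecParasitic}). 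Each factor standing to the right of another must then be conjugated by the Lefschetz diffeomorphism of the connecting path, and it is exactly this bookkeeping that produces the ``going-around'' superscripts $(3\text{-}3')$, $(5\text{-}5')$, $(7\text{-}7')$, $(7\text{-}8')$, $(10\text{-}10')$, $(12\text{-}12')$, $(14\text{-}14')$, $(16\text{-}16')$ recorded on various factors of the $D_j$.

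First I would treat the parasitic part. Two lines $L_s,L_t\subset R_0$ meet after the projection to $\CP^2$ but not in $\CP^{17}$ precisely for the pairs one reads off Figure~21; the arrangement of their images is a partial arrangement of the one ``dual to generic'' of \cite[section IX]{MoTe1}, so by \cite[Thm IX.2.2]{MoTe1} each such point contributes one node factor $\uzs_{s,t}$ (or the corresponding $\lz^2$), which under the two-to-one regeneration --- where every line is doubled, by Condition~\textbf{(3)} (Definition~\ref{defSimDegSur}) --- becomes a factor of type $\uzs_{ss',tt'}$, carrying at most a single $(c\text{-}c')$ superscript that records a line momentarily above in the fibre. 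Collecting these according to which $v_i$ they are grouped with yields the $C_i=\prod D_j$ of the statement; the index range in each $D_j$ (for instance $D_{12}=\prod_{i=1}^{10}\uzs_{ii',12\,12'}$) is simply the list of lines whose image crosses that of $L_{12}$ to the left of $v_{12}$, again read from Figure~21.

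Next I would regenerate the local factors $\Delta^2_i$ one point at a time using the regeneration rules of Subsection~\ref{subsecRegRule}. From Figure~21 one checks: $v_1$ is a $2$-point (a line that is the intersection of two planes, regenerating to a conic with a single branch point), contributing $H_1=Z_{1,1'}$; each $v_i$ with $3\le i\le16$, $i\ne6,12$, is a $3$-point, whose regenerated local factor is $Z^{(3)}_{a',bb'}\cdot\widetilde{Z}_{aa'}$ or $Z^{(3)}_{aa',b}\cdot\widetilde{Z}_{bb'}$ according to which of the two lines through $v_i$ regenerates first --- the local computation of \cite{MoTe4}, identical to the $\CP^1\times C_1$ case; and $v_2,v_6,v_{12}$ are the points where three planes meet in the configuration of Figure~15, so that the first regeneration of the line $L_b$ through such a point is given by Proposition~\ref{prs2.1}, after which applying the regeneration rules to the two surviving tangencies (each $Z^4$ becoming three cusps) and to the node (becoming four nodes) produces $H_i=\varphi(a,b,c)$ with $L_b$ regenerated first, exactly as in Remark~\ref{remReg}. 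Reassembling $\prod_{i=16}^{1}C_iH_i$, and checking that its total degree matches $\deg\Delta^2$ for a plane curve of degree $2\ell=34$, then completes the proof.

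The delicate point, common to the last two paragraphs, is fixing one consistent $g$-base and then tracking, through every regeneration step, the vertical order in the fibre of the doubled points $j,j'$, since that order is what decides whether a factor is written with $\uz$ or $\lz$ and which correction superscript it carries. I expect no new phenomenon beyond those already handled for $\CP^1\times C_1$; the degeneration of Construction~\ref{cons2} is designed precisely so that the two attached quadrics interact with the scroll only through the $3$-in-a-row points $v_2,v_6,v_{12}$, which is what will keep the subsequent invariance analysis and fundamental-group computation tractable, as opposed to the degeneration of \cite{AFT}.
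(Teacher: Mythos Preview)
Your proposal is correct and follows essentially the same approach as the paper, which gives no formal proof but only the sentence ``Once again, we project the degenerated surface to $\CP^2$, compute the BMF of the degenerated branch curve ($=\prod_{i=16}^1\widetilde{C_i}\Delta^2_i$) and regenerate it'' together with the preceding remark classifying $v_1$ as a $2$-point, $v_i$ for $3\le i\le 16,\,i\neq 6,12$ as $3$-points, and $v_2,v_6,v_{12}$ as points handled via Remark~\ref{remReg}. Your write-up simply spells out this outline in more detail, including the parasitic part via \cite[Thm IX.2.2]{MoTe1} and the degree check, so there is nothing to correct.
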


Let $G_2 \doteq \p(\C^2 - B_2)$ be the fundamental group of the
complement of the branch curve.
\begin{prs} \label{prsFundGrpP1_C2}
$G_2$ is isomorphic to a quotient of $\hat{A}(T_2)$, where $T_2$
is the following graph:
%
% = \tilde{A}(T_2)/\{\rho_1,\,\rho_2\}$ where $T_2$ is the
%following graph:

\begin{center}
\epsfig{file=./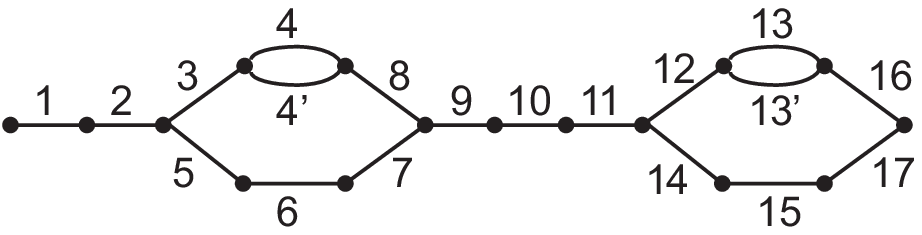}\\ \small{\emph{Figure 22} }
\end{center}

\end{prs}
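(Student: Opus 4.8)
The plan is to mirror, nearly line for line, the argument that produced $G_1 \simeq \hat{A}(T_1)/\rho_1$ in Proposition \ref{prs4}, since the surface $\CP^1 \times C_2$ is built from the same local building blocks (3-points, 2-points, and the special points $v_2, v_6, v_{12}$ whose regeneration is governed by $\varphi(a,b,c)$ as in Remark \ref{remReg}) glued along a longer chain with two attached quadrics instead of one. First I would apply the Zariski--Van Kampen theorem (Theorem \ref{thmVK}) to the braid monodromy factorization $\Delta^2 = \prod_{i=16}^1 C_i \cdot H_i$ just computed, to obtain a presentation of $G_2$ on the generators $\{j, j'\}_{j=1}^{17}$. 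As in Proposition \ref{prs1}, the factors $H_1 = Z_{1,1'}$ give $1 = 1'$; the $3$-point factors $H_i = Z^{(3)}_{a',bb'}\cdot \widetilde{Z}_{aa'}$ (or the mirror) give the braid-type relations $\langle \un a, \un b\rangle = e$ together with the branch-point relations expressing one primed generator in terms of the others; the parasitic factors $C_i$ (built from the $D_j$'s) give the commutation relations $[\un a, \un b] = e$ for lines $L_a, L_b$ with no common vertex; and the factors $H_i = \varphi(a,b,c)$ for $i = 2, 6, 12$ give, exactly as in relation set (5) of Proposition \ref{prs1}, the relations $\langle \un a, b\rangle = \langle \un c, b'\rangle = e$, a ``global'' branch relation, and the relation $[b \un a b^{-1}, \un c] = e$.

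Next I would run the consequences exactly as in Propositions \ref{prs2}, \ref{prs3} and \ref{prsCycleRel}: use the invariance of the factorization under $Z^{p_j}_{jj'}$ (established by the same skeleton-avoidance argument) together with the commutation relations to promote each $\langle \un a, b\rangle = e$ at a $\varphi$-point to the full $\langle \un a, \un b\rangle = e$, to deduce $[\un a^{-1}\un b\un a, \un c] = e$, and to derive the ``local'' triple relations of type (iv)(1)--(2) at each of the two places where a quadric is attached (these are the analogues of Proposition \ref{prs3}, one copy for the quadric $Q_1$ on lines around $v_2$ and one for $Q_7$ on lines around $v_{12}$, modeled on the $\{3,4,4',8\}$ configuration of $T_1$). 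The graph $T_2$ in Figure 22 is read off the dual structure of the degeneration exactly as $T_1$ was: a path corresponding to the chain of quadrics $S_i$, with two extra ``loops'' where the quadrics $Q_1, Q_7$ are glued in, producing two embedded circles; the cycle relation (v) of Definition \ref{def1} is forced at each circle by Proposition \ref{prsCycleRel}'s argument (writing the long conjugating word, killing it edge by edge using the commutations, and landing on a single $\langle \cdot, \cdot\rangle = e$ between adjacent edges). Finally, since every defining relation of $\hat{A}(T_2)$ is seen to hold in $G_2$ and every generator of $G_2$ is (via the branch relations, as in (8)) expressible from the edges of $T_2$, one gets a surjection $\hat{A}(T_2) \twoheadrightarrow G_2$, i.e. $G_2$ is a quotient of $\hat{A}(T_2)$.

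The main obstacle I expect is bookkeeping rather than conceptual: the factorization for $B_2$ has many more parasitic factors $D_j$ (through $D_{17}$), and the subscript patterns $\underset{(c-c')}{\uzs_{ii',jj'}}$ carry data about which generators must be conjugated before the commutation relation reads cleanly, so verifying that the parasitic relations really do reduce to the stated commutations $[\un a, \un b] = e$ — and in particular that no unwanted relation appears — requires carefully tracking the Lefschetz diffeomorphisms past the earlier singular points, exactly the kind of computation done in \cite{MoTe1} for the dual-to-generic line arrangement. The second delicate point is the invariance step at $v_6$ and $v_{12}$: one must check, as in the remark preceding Proposition \ref{prsFundGrpP1_C2}'s analogue, that $H_6$ and $H_{12}$ are each invariant under the relevant $Z^p_{\cdot}Z^q_{\cdot}$, so that the whole factorization is invariant under $\prod_j Z^{p_j}_{jj'}$ and the promotion of $\langle \un a, b\rangle = e$ to $\langle \un a, \un b\rangle = e$ goes through. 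Once these two verifications are in hand, the assembly into the quotient statement is formal, following Proposition \ref{prs4} verbatim.
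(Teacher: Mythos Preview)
Your proposal is correct and takes essentially the same approach as the paper: apply Van Kampen to the factorization $\prod C_i H_i$ and then rerun the arguments of Propositions \ref{prs1}, \ref{prs2}, \ref{prs3}, \ref{prsCycleRel} at each of the local pieces to verify that relations (i)--(v) of Definition \ref{def1} hold in $G_2$. The paper's own proof is in fact just the one-sentence remark that these relations are induced from the BMF via Van Kampen ``as in Propositions \ref{prs1}, \ref{prs2}, \ref{prs3}'', so your write-up is considerably more detailed than what the authors provide, but the content is the same.
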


\begin{proof}
The existence of the relations (i)-(v) as in Definition
\ref{def1} is induced from the braid monodormy factorization of
$B_2$, using the Van-Kampen theorem, as in Propositions
\ref{prs1}, \ref{prs2}, \ref{prs3}.

\end{proof}

\begin{notation}
We introduce the following notations:
\begin{enumerate}
\item [(i)]\emph{Let $T$ be a connected planar graph, with no
repeated edges, and the valence of each vertex is $\leq 3$. We denote these
requirements by $\otimes$.} \item [(ii)] \emph{For a graph $T =
(E,V),\, v \in V$, denote by $E_{T,v} = E_v$ the set of all the
edges in $T$  one of whose ends is $v$.} \item [(iii)] $E^0_v =
E \setminus E_v.$ \item [(iv)] \emph{Let T
be a graph satisfying $\otimes$. Denote by $R(E_v)$ the following expression, induced from the edges in $E_v$:\\
(A) $ E_v = \{u_1,u_2\}$, then $R(E_v) = u_1u_2u_1u_2^{-1}u_1^{-1}u_2^{-1},$ where: \quad \epsfig{file=./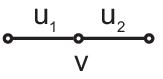}.\\
(B) $ E_v = \{u_1,u_2,u_3\}$, then $R(E_v) =
u_1u_2u_3u_2^{-1}u_1^{-1}u_2u_3^{-1}u_2^{-1}$, where: \quad
\epsfig{file=./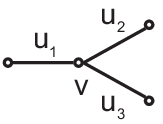}.\\}
\end{enumerate}
\end{notation}

\begin{defi}\emph{Let $T_1 = (V_1,E_1), T_2 =
(V_2,E_2)$ be two graphs satisfying $\otimes$. Assume there exist
two vertices $v_1 \in V_1, v_2 \in V_2$ such that the degree
$d(v_1) = i < 3$ and $d(v_2) \leq 3-i$. We create a new graph $T_1
\bigcup_{v_1}^{v_2} T_2$ by identifying the vertices $v_1$ and
$v_2$. Note that  $T_1 \bigcup_{v_1}^{v_2} T_2$ also satisfies $\otimes$. Let $v$ be the
identified vertex $v_1 = v_2$ in $T_1 \bigcup_{v_1}^{v_2} T_2$.
For example, see the following figure:
\begin{center}
\epsfig{file=./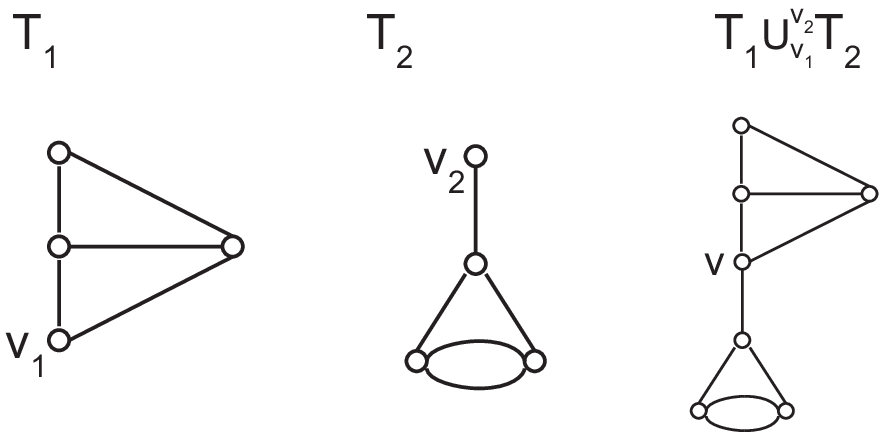} \\
\end{center}}
\end{defi}

\begin{prs} \label{prs5}

$\\\hat A(T_1 \bigcup_{v_1}^{v_2} T_2) = \Big\{ \hat A(T_1) \ast \hat A(T_2)
\Big| \begin{array}{ll} [u_1,u_2] = e,\, \quad u_1 \in E^0_{v_1},
u_2 \in E_2 \,\,\emph{or}\,\, u_2 \in E^0_{v_2}, u_1 \in E_1
\\ R(E_v)= e
\end{array} \Big\}.$

\end{prs}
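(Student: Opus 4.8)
The plan is to prove the identity by a direct comparison of group presentations. First note that the amalgam $T_1\bigcup_{v_1}^{v_2}T_2$ is obtained by identifying a single vertex $v_1$ with $v_2$, so its edge set is the disjoint union $E_1\sqcup E_2$; hence $\hat A(T_1\bigcup_{v_1}^{v_2}T_2)$ and $\hat A(T_1)\ast\hat A(T_2)$ are presented on the same generating set, and it suffices to show that, inside the free group on $E_1\sqcup E_2$, the two sets of defining relators generate the same normal subgroup. One inclusion is immediate: the listed commutation relations are instances of Definition \ref{def1}(i), and $R(E_v)=e$ is an instance of Definition \ref{def1}(ii) (if $|E_v|=2$) or of Definition \ref{def1}(iii) (if $|E_v|=3$), so all of them hold in $\hat A(T_1\bigcup_{v_1}^{v_2}T_2)$. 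The work is the reverse inclusion.

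The second step is to sort the defining relators of $\hat A(T_1\bigcup_{v_1}^{v_2}T_2)$, i.e. the relations (i)--(v) of Definition \ref{def1}, by which edges they involve. Since the identified vertex $v$ is the only one lying on edges of both $T_1$ and $T_2$, the gluing creates no new incidence among the edges of $T_1$, nor among those of $T_2$; thus $T_1$ and $T_2$ sit inside the amalgam as full subgraphs meeting only at $v$, and every relator of type (i)--(v) that uses only edges of $T_1$ is already a defining relator of $\hat A(T_1)$, and symmetrically for $T_2$. This accounts for the factor $\hat A(T_1)\ast\hat A(T_2)$ and reduces the claim to the ``mixed'' relators, those simultaneously involving an edge of $E_1$ and an edge of $E_2$.

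Next I would analyse the mixed relators, using that $v$ is a cut vertex separating $E_1$ from $E_2$: an edge of $T_1$ sharing a vertex with an edge of $T_2$ must do so at $v$, i.e. lies in $E_{v_1}$, and symmetrically on the other side. A simple cycle using edges of both $T_1$ and $T_2$ would therefore meet $v$ twice, which is impossible; since the configurations of Definition \ref{def1}(iv) and (v) each contain such a cycle, they lie entirely within $T_1$ or within $T_2$ and yield no mixed relator. A mixed relator of type (i) is a commutator $[u_1,u_2]$ of disjoint edges $u_1\in E_1$, $u_2\in E_2$; disjointness forces $u_1\notin E_{v_1}$ or $u_2\notin E_{v_2}$, i.e. $u_1\in E^0_{v_1}$ or $u_2\in E^0_{v_2}$ — precisely the commutation relations listed on the right-hand side. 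Finally, a mixed relator of type (ii) or (iii) can only involve edges incident at $v$, that is, edges of $E_{v_1}\cup E_{v_2}=E_v$; since by hypothesis $|E_v|=d(v_1)+d(v_2)\le 3$, what remains are the braid relation(s) among the two or three edges of $E_v$ and, when $|E_v|=3$, the conjugated-commutator relation of Definition \ref{def1}(iii) among them.

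It remains to verify, in the finitely many cases $(d(v_1),d(v_2))\in\{(1,1),(1,2),(2,1)\}$, that these relations among the edges of $E_v$ follow from $R(E_v)=e$ together with the relators already available, namely the braid relation $\langle u,u'\rangle=e$ inherited from whichever of $T_1,T_2$ contributes two edges to $E_v$, plus the listed commutations. When $|E_v|=2$ there is nothing to check: $R(E_v)=e$ is literally the braid relation of Definition \ref{def1}(ii) (case (A) of the definition of $R$), and no type-(iii) relator exists on two edges. When $|E_v|=3$, $R(E_v)=e$ is exactly the instance of Definition \ref{def1}(iii) attached to the planar configuration at $v$, with the ``middle'' edge of $E_v$ in the conjugating role, and one deduces from it — together with the surviving braid relation and the commutations — the remaining braid relations and the remaining instances of (iii) at $v$ by the same conjugation-and-substitution manipulations used in Propositions \ref{prs1}--\ref{prsCycleRel}. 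I expect this last case-by-case computation, showing that the single interface relation $R(E_v)$ suffices to recover the full bouquet of Artin-type relations at the $3$-valent vertex $v$, to be the only genuinely computational part and hence the main obstacle; everything else is bookkeeping about which of the five configuration types in Definition \ref{def1} can straddle a cut vertex.
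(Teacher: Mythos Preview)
Your approach is the same as the paper's --- a direct comparison of presentations, followed by a case split on the valence at the glued vertex --- but you carry it out more carefully. The paper's proof simply observes that the relations of $\hat A(T_1)$, $\hat A(T_2)$, the listed commutators, and $R(E_v)=e$ all hold in $\hat A(T_1\bigcup_{v_1}^{v_2}T_2)$, then asserts (``We only have to take into account the relation induced from the identification of $v_1$ and $v_2$'') that nothing else is needed; for case (b) it says only ``We follow the same arguments.'' Your cut-vertex argument that no configuration of type (iv) or (v) can straddle $v$ is a genuine addition --- the paper does not mention these relator types at all --- and your explicit identification of the $|E_v|=3$ case as the place where one must check that the single relator $R(E_v)$, together with the braid relation already present in one of the $\hat A(T_i)$, generates all the type-(ii) and type-(iii) relations at $v$, is exactly the point the paper sweeps under ``same arguments.'' So: same route, but you have isolated correctly the one step that is not pure bookkeeping.
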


\begin{proof} We first note that the degree of $v_1$
is less than 3, so the only possible cases are:\\
(a) \quad \epsfig{file=./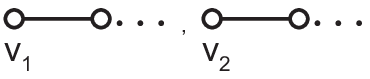} \quad(b) \quad
\epsfig{file=./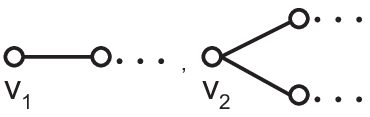} \quad
(c) \quad \epsfig{file=./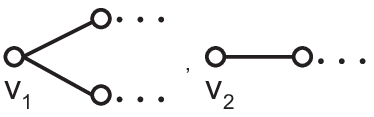}.\\
Cases (b) and (c) are actually the same, so we consider only cases
(a) and (b). Since the edges of $T_1, T_2$ are not changed under
the identification of $v_1$ and $v_2$, it is obvious that the
relations in $\hat A(T_1)$ and $\hat A(T_2)$ are satisfied in
$\hat A(T_1 \bigcup_{v_1}^{v_2} T_2)$. In addition, for an edge
$u_1 \in E_1$ such that $u_1 \not\in E_{v_1},\,u_1$ is disjoint
from any edge $u_2 \in E_2$. Thus, in $\hat A(T_1
\bigcup_{v_1}^{v_2} T_2)$, the generator corresponding to $u_1$
commutes with any generator corresponding to $u_2$. The same is
true for an edge $u_2 \not\in E_{v_2}$ and edges in $E_1$. We only
have to take into account the relation induced from the
identification of $v_1$ and $v_2$. Consider case (a). $E_v$ is a
set of two adjacent edges $u,w$, intersecting at $v$. So in
$\hat A(T_1 \bigcup_{v_1}^{v_2} T_2)$, by Definition~(\ref{def1})(ii), we would have the relation $vwv = wvw$, or
$R(E_v)=e$. We follow the same arguments for case (b).
\end{proof}

\begin{notation}\label{notXg}
\begin{enumerate}
\item [(i)] \emph{Let $T_1 = (V_1,E_1)$ be the graph in
proposition \ref{prs4}, $T_0 = (V_0,E_0)$ , and let $\delta \in
V_1,\,\alpha, \beta \in V_0$ be the following vertices:}
\begin{center}
\epsfig{file=./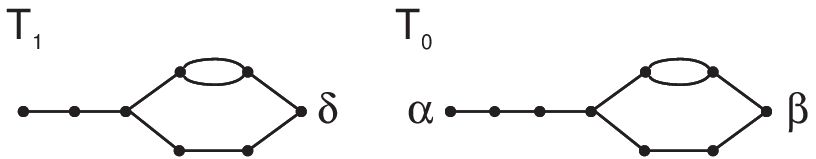}
\end{center}
\item [(ii)] \emph{For $1 < g$, take $g-1$ copies of $T_0$, and
denote by $\alpha_i, \beta_i,\, 1\leq i \leq g$ the corresponding
vertices in each $T_0$. Let
 $T_g \doteq T_1 \bigcup_{\delta}^{\alpha_1}T_0 \bigcup_{\beta_1}^{\alpha_2} ...
  \bigcup_{\beta_{g-1}}^{\alpha_{g}}T_0$.
}\item [(iii)]\emph{We now construct a degenerated model of $\CP^1
\times C_g$, where $C_g$ is a genus $g$ curve. Embed $\CP^1 \times
\CP^1$ by the linear system $(1,3g)$, degenerate it to a union of
$6g$ planes, attach $g$ quadrics to $g$ pairs of non--intersecting
planes and then degenerate the quadrics, as was done in Constructions \ref{cons1} and \ref{cons2}. The resulting
degeneration should be composed from $g$ ``building blocks" as in
figure 17. Explicitly
\begin{center}
\epsfig{file=./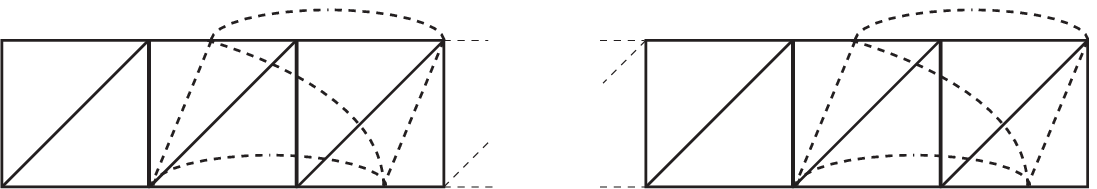}\\ \small{Figure 23 : degeneration of
$\CP^1 \times C_g$ embedded in $\CP^{8g+1}$}
\end{center}
Denote by $S_g$ this surface whose degeneration is as in Figure
24 above. Consider a generic projection $\CP^{8g+1} \to \CP^2$
and its restriction to $S_g$, we denote by $B_g$ the branch curve
and $G_g = \p(\C^2 - B_g)$ the corresponding fundamental group.}
 \end{enumerate}
\end{notation}

 We saw (Proposition \ref{prs4}) that $G_1 \simeq \hat A(T_1)/\rho_1$ and that $G_2$ is a quotient of
$\hat A(T_2)$. Thus, by induction, we have the following
\begin{thm} \label{thm1}

$G_g$ is isomorphic to a quotient of $\hat A(T_g)$.
 %\simeq \tilde A(T^g_1)/\{\rho_1,..., \rho_g\}$
\end{thm}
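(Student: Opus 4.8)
The plan is to argue by induction on $g$, the base cases $g=1$ and $g=2$ being exactly Propositions \ref{prs4} and \ref{prsFundGrpP1_C2}. For the inductive step I would exploit the amalgamation $T_g = T_{g-1}\bigcup T_0$ of Notation \ref{notXg}(ii) together with Proposition \ref{prs5}, which presents $\hat A(T_g)$ as $\{\hat A(T_{g-1})\ast \hat A(T_0)\mid [u_1,u_2]=e,\ R(E_v)=e\}$, the commutations running over the $E^0$-edges across the junction vertex $v$ and $R(E_v)=e$ being the single junction relation. Equivalently, unwinding the recursion, $\hat A(T_g)$ is the free product $\hat A(T_1)\ast \hat A(T_0)^{\ast(g-1)}$ modulo the stagewise families of commutations and the relations $R(E_v)=e$ introduced at each gluing. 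Hence it suffices to build a surjection $\hat A(T_g)\twoheadrightarrow G_g$: fix the images of the edge-generators via the generators supplied by the Zariski--Van Kampen theorem (Theorem \ref{thmVK}), verify that all the defining relations of $\hat A(T_g)$ hold in $G_g$, and verify surjectivity.

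First I would record the braid monodromy factorization of $B_g$. By the same computation underlying Theorem \ref{thmBMF1} and its $\CP^1\times C_2$ analogue, the degeneration of $S_g$ built in Notation \ref{notXg}(iii) is obtained from that of $S_{g-1}$ by appending one further building block of the type analyzed in Subsection \ref{secP1_C1}; correspondingly $\Delta^2=\prod C_i\cdot H_i$, where the $H_i$ are the local regenerated factors of the singular points (the $2$-point factor $Z_{1,1'}$, the $3$-point factors $Z^{(3)}_{a',bb'}\cdot\widetilde Z_{aa'}$ or $Z^{(3)}_{aa',b}\cdot\widetilde Z_{bb'}$, and the special factors $\varphi(a,b,c)$ of Remark \ref{remReg}) and the $C_i$ encode the parasitic intersections computed from the line arrangement via \cite[Theorem IX]{MoTe1}. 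Applying Theorem \ref{thmVK} with the invariance and complex-conjugation manipulations exactly as in Propositions \ref{prs1}--\ref{prsCycleRel}, and performing the same eliminations $j'\mapsto(\text{unprimed generators})$ used to pass from Proposition \ref{prs1} to Proposition \ref{prs4}, one reads off: the defining relations of $\hat A(T_{g-1})$ (which hold in $G_{g-1}$ by the induction hypothesis, and persist in $G_g$ because the BMF factors of the unchanged sub-configuration of the degeneration are still present); the defining relations of $\hat A(T_0)$ coming from the new block's $H_i$'s and parasitics; the commutation relations of Proposition \ref{prs5} coming from the cross-parasitics between non-adjacent lines of the old and new parts; and the relation $R(E_v)=e$ from the half-twist/braid relations at the junction singular point (a $3$-point or a $\varphi$-point, as in Remark \ref{remReg}). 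Surjectivity then follows because every Van Kampen generator of $\p(\C^2-B_g)$, after the branch-point identifications and the relations above, is the image of an edge of $T_g$, exactly as in the cases $g=1,2$. This yields the desired epimorphism $\hat A(T_g)\twoheadrightarrow G_g$.

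The step I expect to be the main obstacle is the careful bookkeeping of the braid monodromy factorization of $B_g$ in a neighbourhood of the junction vertex. One must check that appending the new block disturbs the old factorization only locally (so that the relations of $\hat A(T_{g-1})$ genuinely survive in $G_g$, including those involving edges incident to the junction, where the local picture changes), that the new block contributes precisely the relations of $\hat A(T_0)$ --- in particular that its branch points still permit expressing the primed generators through the unprimed ones --- and that the only genuinely new ``global'' relation across the junction is $R(E_v)=e$, with no extra cross relations beyond the parasitic commutators. A secondary simplification worth noting is that, in contrast with $g=1$, where the residual relation $\rho_1$ had to be pinned down, for $g\ge 2$ one only asserts that $G_g$ is a quotient: it is not necessary to prove that the listed relations generate all relations of $G_g$, only that each of them holds and that $G_g$ is generated by the images of the edges of $T_g$.
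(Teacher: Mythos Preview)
Your proposal is correct and follows essentially the same approach as the paper: induction on $g$ with the base cases $g=1,2$ supplied by Propositions~\ref{prs4} and~\ref{prsFundGrpP1_C2}. The paper's own proof is in fact just the single phrase ``Thus, by induction'' immediately preceding the theorem statement, so your detailed unpacking of the inductive step (via Proposition~\ref{prs5}, the block-by-block structure of the BMF, and the bookkeeping at the junction vertex) is considerably more explicit than what the paper provides, but the underlying strategy is identical.
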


 \subsection{The fundamental group of the Galois cover of $\CP^1 \times C_g$}
\label{subSecGalCov} In this subsection we find the fundamental
group of the Galois cover of $\CP^1 \times C_g$, generalizing the
results of \cite{AGTV}, \cite{AG} and using the method outlined in
\cite{L}. We start with reviewing the known facts on the
fundamental group of the Galois cover of a surface.

 Let $S$ be a projective surface of degree $n$. Given a generic
 projection $\pi : S \to \pp^2$, we define the Galois cover as the closure of
the $n$-fold fibered product $S_{Gal} = \overline{S \times_{\pi}
... \times_{\pi} S - \Delta}$ where $\Delta$ is the generalized
diagonal. We denote by $S^{aff}_{Gal}$ the affine part of
$S_{Gal}$.

 Let $B$ be the branch curve of $\pi : S \to \pp^2$. It is known that we have
 the following exact sequences (see e.g., \cite{MoTeSim}):

 $$
0 \rightarrow \pi_1(S^{aff}_{Gal}) \rightarrow \pi_1(\C^2 -
B)/\langle \Gamma^2 = 1 \rangle \rightarrow Sym_n \rightarrow 0,
$$
\begin{equation} \label{eqnGalSeq}
 0 \rightarrow \pi_1(S_{Gal}) \rightarrow \pi_1(\CP^2 - B)/\langle \Gamma^2 = 1 \rangle \rightarrow Sym_n \rightarrow 0.
 \end{equation}

Let $\delta = \prod \G_i$ the product of all the standard
topological generators of  $\pi_1(\C^2 - B)$. Recall that
$\pi_1(\CP^2 - B) = \pi_1(\C^2 - B)/\langle \delta = 1 \rangle$.
Then, by \cite[Proposition 5.10]{L} and Theorem \ref{thm1}, we
see that for the surface $S_g$ (see notation
\ref{notXg}(iii))

  \begin{equation} \label{eqnGalGroup}
  H_1((S_g)_{Gal}) \simeq \mathbb{Z}^{2g(8g-1)}.
  \end{equation}

We consider $\delta$ as an element in $\pi_1(S^{aff}_{Gal})$.
Denote $Z = \langle \delta \rangle \cap \mathbb{Z}$ (see \cite[Thm
4.5]{L}). Then we have the following exact sequence (see
\cite[Proposition 5.10]{L}):

\begin{equation} \label{eqnExactSeqXg}
0 \to \mathbb{Z} / Z \to \pi_1((S_g)_{Gal}) \to
\mathbb{Z}^{2g(8g-1)} \to 0.
\end{equation}

In order to compute $\pi_1((S_g)_{Gal})$ we need the following
definition.

\begin{defi}
\emph{The \textsl{generalized Coxeter group} $\hat{C}(T)$ associated to a
graph $T$ is defined as
$$
\hat C(T) = \hat{A}(T) / \langle  \Gamma^2 = 1 \rangle
$$
where $\G$ goes over all the generators of $\hat{A}(T)$.}
\end{defi}

Note that $\pi_1((S_g)_{Gal})$ is a subgroup of a quotient of
$\hat C(T_g)$, by Theorem \ref{thm1} and the short exact sequence (\ref{eqnGalSeq}).

\begin{thm}
$$
\pi_1((S_g)_{Gal}) \simeq \mathbb{Z}^{2g(8g-1)}.
$$
\end{thm}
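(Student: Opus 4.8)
The plan is to show that the central cyclic subgroup $\mathbb{Z}/Z$ appearing in the exact sequence (\ref{eqnExactSeqXg}) is trivial; once this is established, that sequence reduces to the claimed isomorphism $\pi_1((S_g)_{Gal}) \simeq \mathbb{Z}^{2g(8g-1)}$. By \cite[Theorem 4.5]{L} and \cite[Proposition 5.10]{L} the subgroup $\mathbb{Z}/Z$ is central in $\pi_1((S_g)_{Gal})$, so $\pi_1((S_g)_{Gal})$ is a central extension of the free abelian group $\mathbb{Z}^{2g(8g-1)}$ by a cyclic group. Such an extension need not split --- a Heisenberg-type group has the same abelianization --- so the computation (\ref{eqnGalGroup}) of $H_1((S_g)_{Gal})$ does not by itself suffice, and the structural information of Theorem \ref{thm1} must be brought in.

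The main step is therefore to prove that $\pi_1((S_g)_{Gal})$ is abelian; then $\pi_1((S_g)_{Gal}) = H_1((S_g)_{Gal}) \simeq \mathbb{Z}^{2g(8g-1)}$ by (\ref{eqnGalGroup}) and we are done. By the remark following (\ref{eqnGalSeq}) together with Theorem \ref{thm1}, $\pi_1((S_g)_{Gal})$ is isomorphic to the kernel of the natural surjection onto $Sym_n$ from a suitable quotient of $\hat{C}(T_g)$, so it suffices to show that this kernel is abelian. For this I would exploit the inductive description $T_g = T_1 \bigcup_{\delta}^{\alpha_1} T_0 \bigcup_{\beta_1}^{\alpha_2} \cdots \bigcup_{\beta_{g-1}}^{\alpha_g} T_0$ of Notation \ref{notXg}: passing Proposition \ref{prs5} to the Coxeter quotients shows that edges of $\hat{C}(T_g)$ lying in different blocks commute, so that the kernel is generated by contributions localized in the single blocks $T_1$ and $T_0$, any two contributions from different blocks already commuting. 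It then remains to verify, one block at a time, that the local Galois group of a copy of $T_1$ (respectively $T_0$) is abelian: for $T_1$ this is essentially the case $g = 1$, handled by the methods of \cite{AGTV} and \cite{AG}, and for $T_0$ one proceeds the same way, using the relations $\Gamma^2 = 1$ to collapse the braid relations of Definition \ref{def1}(ii) to relations of order three and to reduce the cycle relations of Definition \ref{def1}(iv)--(v) to commutativity of the relevant generators. An alternative route, closer in spirit to \cite{L}, bypasses abelianness and argues directly, inside the explicit presentation of $\pi_1(\C^2 - B_g)/\langle \Gamma^2 = 1\rangle$ furnished by Theorem \ref{thm1}, that the distinguished central $\mathbb{Z} \subset \pi_1(S^{aff}_{Gal})$ of \cite[Theorem 4.5]{L} is generated by a power of $\delta = \prod \Gamma_i$, that is, that $Z = \mathbb{Z}$.

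The step I expect to be the main obstacle is the block-by-block commutativity check, and within it the treatment of the two-cycle relation of Definition \ref{def1}(v) in the Coxeter quotient: once $\Gamma^2 = 1$ (and, for the projective group, $\delta = 1$) is imposed, the interplay of this relation with the braid relations has to be disentangled carefully, and this is the only place where a genuine --- though routine --- braid monodromy computation is required.
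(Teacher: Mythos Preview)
Your proposal is essentially sound, and in fact your ``alternative route'' is exactly the line the paper takes: it does \emph{not} attempt to prove directly that $\pi_1((S_g)_{Gal})$ is abelian, but instead shows that $\delta=\prod\Gamma_i$ has infinite order in $G_g/\langle\Gamma^2=1\rangle$, whence $Z=\mathbb{Z}$ and the exact sequence~(\ref{eqnExactSeqXg}) collapses. The trick used is short and worth knowing: one passes to the further quotient $H = G_1/\langle\Gamma^2=1,\ \Gamma_4=\Gamma_{4'}\rangle$, observes that the global relation $\rho_1$ becomes trivial there, and identifies $H$ with the Coxeter group $C_Y(T)$ of \cite{RTU} for a specific graph $T$. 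By \cite[Theorem~6.1]{RTU} this group is $Sym_8\ltimes\mathbb{Z}^7$, hence infinite, so its Coxeter element has infinite order (\cite[p.~175]{H}); therefore $\delta$ already has infinite order upstairs in $G_1/\langle\Gamma^2=1\rangle$ and a fortiori in $\pi_1((S_1)^{aff}_{Gal})$. The case $g>1$ is declared similar.

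Your main route --- proving abelianness via the block decomposition of $T_g$ --- would also work in principle, but is more laborious, and your summary of Proposition~\ref{prs5} is a little too optimistic: that proposition does \emph{not} say that all edges from different blocks commute, only those in $E^0_{v_i}$; the edges meeting at the glued vertex satisfy the braid-type relation $R(E_v)=e$ instead. So the assertion that ``any two contributions from different blocks already commute'' needs further argument at the gluing vertices. The paper's Coxeter-quotient argument sidesteps this entirely and is a one-paragraph proof.
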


\begin{proof}
We will prove the theorem only for $g=1$, where for the $g>1$ the proof is similar.
Let us consider the following group
$$
H = G_1/\langle \G^2 = 1, \G_4 = \G_{4'}\rangle =  \hat C(T_1)/ \langle \G_4 = \G_{4'} \rangle.
$$

Examining the relations in $G_1$, we see that the relation $\rho_1$ becomes trivial under the new added
relations (see Notation \ref{NotPho} and Proposition \ref{prs4}). Therefore, the group $H$ is in fact
isomorphic to the following Coxeter group
$ H \simeq C_Y(T)$ (see \cite{RTU} for the definition of the Coxeter group $C_Y(T)$), where $T$ is as in the figure below.

\begin{center}
\epsfig{file=./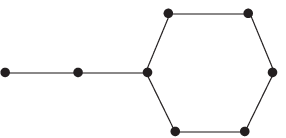}\\ \small{Figure 24 : The graph $T$
associated to the Coxeter group $H$}
\end{center}
Therefore, by \cite[Theorem 6.1]{RTU}, $H \simeq Sym_8 \ltimes
A_{1,8} = Sym_8 \ltimes \mathbb{Z}^7$ (see \cite{RTU} for the notation of $A_{t,n}$). As this group is infinite,
its associated Coxeter element $\prod \G_i$ has infinite order
(see e.g. \cite[pg. 175]{H}). Thus its order is infinite also in
the group $G_1/\langle \G^2 = 1 \rangle$ and thus in any subgroup
of it, for example in $\pi_1((S_g)^{aff}_{Gal})$. Therefore, the
order of $\delta \in \pi_1(S^{aff}_{Gal})$ is infinite, and $Z =
\langle \delta \rangle \cap \mathbb{Z} = \mathbb{Z}$. Considering
the exact sequence in (\ref{eqnExactSeqXg}), we are done.
\end{proof}

\end{document}